\title{A Conservative Cascade Semi-Lagrangian Method for Solving the Vlasov Equation}
\author{
  Chunyang Xu\thanks{Email: \texttt{25b312011@stu.hit.edu.cn}} \\
  {\small School of Mathematics, Harbin Institute of Technology, Harbin 150001, China}
  \and
  Michel Mehrenberger \thanks{Email: \texttt{michel.mehrenberger@univ-amu.fr}}\\
  {\small Institut de Mathématiques de Marseille (I2M), Aix Marseille Univ, CNRS, Marseille, France}
  \and
  Chang Yang \thanks{Email: \texttt{yangchang@hit.edu.cn}}\\
  {\small School of Mathematics, Harbin Institute of Technology, Harbin 150001, China}
}
\date{\today}
\newtheorem{proposition}{Proposition}[section]
\newtheorem{lemma}{Lemma}[section]
\begin{document}

\maketitle

\begin{abstract}
The cascade remapping method, originally proposed by Nair et al.\ (2002) for atmospheric modeling, enables efficient and mass-conservative semi-Lagrangian (SL) transport through successive one-dimensional remapping. While widely used in geophysical flows, its application to plasma kinetics remains limited. To exploit its potential advantages in conservation and scalability, this work applies the conservative cascade semi-Lagrangian (CCSL) scheme to the Vlasov equation and related plasma models. A consistency analysis shows that the scheme attains second-order spatial accuracy, with the dominant error arising from the geometric approximation of the backtracked region. Moreover, two improvements are introduced: a freestream-preserving correction that ensures exact volume conservation, and a maximum-principle limiter that suppresses spurious oscillations while maintaining positivity and mass conservation. Numerical tests—including linear advection, guiding-center, and relativistic Vlasov–Maxwell models—confirm the high accuracy, robustness, and long-term stability of the improved CCSL method. Compared with the conservative semi-Lagrangian (CSL) and the backward semi-Lagrangian (BSL) schemes, it better preserves physical invariants under divergence-free conditions, providing a robust and efficient framework for high-fidelity plasma kinetic simulations with good parallel scalability.
\end{abstract}

\vspace{1em}
\vskip 1.5cm

\section{Introduction}

The evolution of plasmas can be described by the Vlasov equation self-consistently coupled with the corresponding field equations such as the Poisson equation or the Maxwell equations. These models are widely used in plasma physics, astrophysics, and accelerator physics to characterize the transport and deformation of the charged-particle distribution function in high-dimensional phase space. Due to the strong nonlinearity of the governing equations and their coupling with electromagnetic fields, analytical solutions are rarely available. Therefore, the development of efficient, stable, and conservative numerical algorithms has become a central topic in kinetic plasma simulations.

Numerical methods for solving Vlasov-type equations can be broadly classified into two categories: deterministic and statistical approaches. Deterministic methods include finite-difference \cite{banks2019high,Whealton1986A}, finite-volume \cite{MR2451425,FILBET2003247,FILBET2001166,5518448,Xiong2008A}, finite-element \cite{Douglas1982,GUO2013108,QIU20118386,ROSSMANITH20116203}, spectral \cite{GIRALDO1998114,LEBOURDIEC2006528}, and semi-Lagrangian (SL) \cite{Besse2008,SONNENDRUCKER1999201,Charles2013,EINKEMMER2019937,BESSE2003341,Nicolas2014A,yang2021} methods. These approaches can preserve the conservative structure of the equations and achieve high accuracy, though they are often limited by the Courant-Friedrichs–Levy (CFL) condition and computational cost in high-dimensional problems. Statistical methods, such as the particle-in-cell (PIC) technique \cite{Cottet1986,Victory1991,Filbet2016,DEGOND20105630}, approximate the distribution function by sampling a large number of macroparticles. While computationally efficient, PIC schemes tend to exhibit significant statistical noise in low-macroparticle-density regions.

Among deterministic solvers, the SL method has attracted considerable attention for its combination of accuracy and stability. The method traces characteristics backward in time on a fixed grid, allowing for large time steps while maintaining high-order accuracy and robustness. Owing to these advantages, SL schemes have been successfully applied in atmospheric modeling, fluid dynamics, and plasma kinetic simulations.

Conventional SL methods often rely on operator splitting \cite{Cheng1976The,Klimas1994A}, which decomposes multidimensional advection problems into a sequence of one-dimensional subproblems. This strategy simplifies implementation and facilitates parallelization. However, for nonlinear systems—such as the guiding-center model or the relativistic Vlasov model—splitting errors may accumulate at large time steps, resulting in a noticeable loss of global accuracy \cite{HUOT2003512}. To overcome this issue, several non-splitting SL approaches \cite{Nicolas2014A,Cai2019,CAI2018529} have been proposed in recent years to reduce splitting errors and improve spatio-temporal consistency. 
On the other hand, the Vlasov equation possesses multiple conservation properties under source-free conditions, including mass conservation and the invariance of $L^1$ and $L^2$ norms. To preserve these invariants, numerous conservative semi-Lagrangian (CSL) schemes have been developed. Crouseilles et al. \cite{CROUSEILLES20101927} proposed a CSL method based on volume remapping to ensure global mass conservation; Qiu and Christlieb \cite{QIU20101130} constructed a high-order conservative SL scheme with WENO reconstruction to improve spatial resolution; In the framework of semi-Lagrangian finite volume WENO schemes, Zheng et al. \cite{ZHENG2022114973} also adopted a positivity-preserving limiter to maintain the positivity of numerical solutions and suppress nonphysical oscillations, which ensures the reliability of simulations for kinetic models and fluid equations. These studies have greatly advanced the conservative properties of SL algorithms.

Among the conservative SL schemes, the cascade remapping method proposed by Nair et al. \cite{Nair2002Efficient,Ramachandran1999Monotonic} in atmospheric sciences provides a means to achieve strict mass conservation within the SL framework. This approach decomposes multidimensional conservative transport into a sequence of one-dimensional remapping operations, thereby maintaining both global and local mass conservation while retaining the large time-step and high-order features of SL methods. Building upon this idea, the conservative cascade semi-Lagrangian (CCSL) method introduces intermediate cells along each coordinate direction and performs successive conservative remapping to realize efficient multidimensional volume transport. Owing to its compact structure, straightforward implementation, and favorable computational scalability, CCSL has emerged as a promising direction for conservative SL algorithms.

Beyond conservation, researchers have recognized that freestream preservation \cite{NONOMURA2015242,Zhu2017,Kopriva2006} plays a critical role in maintaining long-term accuracy and stability. Ideally, a numerical scheme should exactly preserve a uniform distribution in the absence of sources; however, geometric approximation of the backtracked region and interpolation errors may introduce small deviations, leading to cumulative errors in physical invariants. Moreover, high-order interpolation in SL schemes can generate spurious oscillations or negative values. Therefore, preserving the maximum principle becomes another key requirement for ensuring stability.

To address these challenges, this work proposes two improvements to the CCSL method. First, a freestream-preserving correction is developed by adjusting the geometry of the backtracked region and reconstructing fluxes, ensuring exact freestream preservation and enhanced geometric consistency. Second, a maximum-principle limiter is constructed to suppress nonphysical oscillations while maintaining mass conservation and positivity. A consistency analysis confirms that the improved scheme achieves second-order spatial accuracy and exhibits enhanced robustness and precision in representative kinetic simulations.

The remainder of this paper is organized as follows. Section \ref{sec_method} presents the mathematical model, the conventional SL formulation, and the principles of the CSL and CCSL methods. Section \ref{sec_improvement} presents the consistency analysis, freestream-preserving correction, and the maximum-principle limiter. Section \ref{sec_results} provides numerical experiments for the linear advection equation, the guiding-center model, and the relativistic Vlasov–Maxwell system to validate the accuracy, conservation laws, and freestream-preserving and maximum-principle properties of the proposed method. Section \ref{sec_conclusion} concludes the paper and discusses extensions toward higher-dimensional and parallel implementations.

\section{Semi-Lagrangian method}
\label{sec_method}
\subsection{Mathematical model and characteristics}
We consider the following transport equation:
\begin{equation}
    \label{Eq.transport equation}
    \frac{\partial f}{\partial t}+\nabla_{\mathbf{x}}\cdot(\mathbf{a}f)=0,\, \mathbf{x}\in\Omega\subset\mathbb{R}^d,
\end{equation}
where $f$ is an unknown function dependent on time $t$ and space $\mathbf{x}$ and $\mathbf{a}$ is a divergence free vector (i.e., $\nabla_{\mathbf{x}}\cdot \mathbf{a}=0$). Eq. \eqref{Eq.transport equation} is equivalent to the advection equation:
\begin{equation}
    \label{Eq.advection equation}
    \frac{\partial f}{\partial t}+\mathbf{a}\cdot \nabla_{\mathbf{x}}f=0,\, \mathbf{x}\in\Omega\subset\mathbb{R}^d,
\end{equation}
as $\nabla_{\mathbf{x}}\cdot(\mathbf{a}f)=\mathbf{a}\cdot \nabla_{\mathbf{x}}f+f\nabla_{\mathbf{x}}\cdot\mathbf{a}$.
From the properties of the advection equation, we have the characteristics of Eq. \eqref{Eq.advection equation}:
\begin{equation}
    \label{Eq.characteristics}
    \left\{
    \begin{aligned}
        &\frac{d\mathbf{X}}{dt}=\mathbf{a}(\mathbf{X},t),\\
        &\mathbf{X}(s)=\mathbf{x},
    \end{aligned} 
    \right.
\end{equation}
where we denote by $\mathbf{X}(t;s,\mathbf{x})$ the solution of Eq. \eqref{Eq.characteristics}. Let the initial condition of the equation be $f(\mathbf{x},0)=f_0(\mathbf{x})$. Then we have 
\begin{equation*}
    f(\mathbf{x},t)=f_0(\mathbf{X}(0;t,\mathbf{x})).
\end{equation*}
The Semi-Lagrangian method achieves the iterative update of the unknown from one time step to the next by solving the characteristics and employing interpolation methods. As shown in the stability analysis of high-order Semi-Lagrangian schemes \cite{Falcone1998}, this method is well-suited for linear advection problems and permits large time steps; thus it is free from the CFL constraint.

\subsection{Classical semi-Lagrangian schemes}
\label{Sec. SL}
The semi-Lagrangian method has several variants, including forward and backward schemes, as well as point-to-point and volume-to-volume approaches. Among these variants, the point-to-point backward semi-Lagrangian method (BSL) is relatively classical. Its computational accuracy depends on the temporal accuracy of characteristics calculation and the interpolation accuracy, with a simple structure and strong robustness. However, the BSL method is not a conservative scheme and cannot guarantee mass conservation. In contrast, the Conservative Semi-Lagrangian (CSL) method \cite{CROUSEILLES20101927}, which is based on volume backtracking, is capable of ensuring mass conservation.

Let's consider the 1D transport equation:
\begin{equation*}
\label{Eq. 1D transport equation}
\frac{\partial f}{\partial t} + a \frac{\partial f}{\partial x} = 0,\;x\in I \subset \mathbb{R},
\end{equation*}
where $a$ is a constant. For $N\in \mathbb{N^*}$, we define the grid points
\begin{equation*}
    x_i=x_\text{min}+i\Delta x,\;i\in\frac{1}{2}\mathbb{Z}\ \;\text{with}\;\Delta x=(x_\text{max}-x_\text{min})/N \; \text{and} \; I=[x_\text{min},x_\text{max}].
\end{equation*}

For the BSL method, assume that we already know the solution values $f_i^n = f(t_n,x_i)$, $i=0,...,N$; the specific steps to compute $f_i^{n+1} = f(t_{n+1}, x_i)$ are as follows:
\begin{enumerate}
    \item Solve the characteristics to compute $X(t_n; t_{n+1}, x_i)$. According to the characteristic of the transport equation, the solution satisfies:
    \[
    f(t_{n+1},x_i) = f(t_n,X(t_n;t_{n+1},x_i));\\
    \]
    \item Perform interpolation based on the above relationship. Since the backtracked point $X(t_n;t_{n+1},x_i)$ usually does not lie on the grid, interpolation, (e.g. Lagrange interpolation, cubic spline interpolation) is required to approximate the value of $f(t_n,X(t_n;t_{n+1},x_i))$ from the known grid values of $f_i^n$.
\end{enumerate}

For the CSL method, assuming that the values $\overline{f}_i^n=\frac{1}{\Delta x}\int_{x_{i-\frac{1}{2}}}^{x_{i+\frac{1}{2}}}f(t_n,x)\,dx\;i=0,...,N$ are known, we adopt the same uniform 1D mesh as defined for the BSL method to implement volume-based backtracking and mass integration for computing the values $\overline{f}_i^{n+1}$.

Thanks to the characteristic, the volume-based backtracking is defined as:
\begin{equation}
\label{Eq. CSL backward}
\begin{aligned}
\int_{x_{i-\frac{1}{2}}}^{x_{i+\frac{1}{2}}} f(t_{n+1}, x) \, dx &= \int_{x_{i-\frac{1}{2}}^*}^{x_{i+\frac{1}{2}}^*} f(t_n, x) \, dx,
\end{aligned}
\end{equation}
where $x_{i-\frac{1}{2}}^*=X(t_n;t_{n+1},x_{i-\frac{1}{2}})$, $i=0,...,N+1$. Denote $m_i = \int_{x_{i-\frac{1}{2}}}^{x_{i+\frac{1}{2}}} f(t_{n+1}, x) \, dx$ and $m_i^* = \int_{x_{i-\frac{1}{2}}^*}^{x_{i+\frac{1}{2}}^*} f(t_n, x) \, dx$. Using Eq. \eqref{Eq. CSL backward}, we then have
\[
\overline{f}_i^{n+1} = \frac{m_i}{\Delta x} = \frac{m_i^*}{\Delta x}.
\]
We perform a conservative approximation for $m_i^*$. Let $x_{j-\frac{1}{2}}$ be the nearest half-grid point to the left side of $x_{i-\frac{1}{2}}^*$; thus, we have:
\begin{equation*}
    m_i^*=\int_{x_{i-\frac{1}{2}}^*}^{x_{i+\frac{1}{2}}^*} f(t_n, x) \, dx=\int_{x_{j+\frac{1}{2}}}^{x_{i+\frac{1}{2}}^*} f(t_n, x) \, dx+\Delta x\overline{f}_j^n-\int_{x_{j-\frac{1}{2}}}^{x_{i-\frac{1}{2}}^*} f(t_n, x) \, dx.
\end{equation*}

Now for instance to approximate $\int_{x_{j-\frac{1}{2}}}^{x_{i-\frac{1}{2}}^*}f(t_n, x)\, dx$, the following steps are performed:
\begin{enumerate}
    \item Let $F_j(x)$ be the primitive function of $f(t,x)$ with respect to $x$ over the interval $[x_{j-\frac{1}{2}}, x_{j+\frac{1}{2}}] $; that is,
    \begin{equation*}
        F_j(x)=\int_{x_{j-\frac{1}{2}}}^xf(t_n,\xi)\,d\xi;
    \end{equation*}
    \item Construct $F^{2d+1}_j(x), \;d\in \mathbb{N}$ as an interpolation polynomial of degree $2d+1$ for $F_j(x)$;
    \item Approximate the integral thanks to the interpolation polynomial : $\int_{x_{j-\frac{1}{2}}}^{x_{i-\frac{1}{2}}^*}f(t_n, x)\, dx=F_j(x_{i-\frac{1}{2}}^*) \approx F_j^{2d+1}(x_{i-\frac{1}{2}}^*)$.
\end{enumerate}

Through the above steps, we obtain $m_i^*$ and get $\overline{f}_i^{n+1}=\frac{m_i^*}{\Delta x}$, thus completing the iteration for one time step.

We denote by $\mathcal{T}$ the one-dimensional conservative reconstruction operator in the CSL method, 
defined as
\[
\mathcal{T} : \bigl(\{x_{i-\frac{1}{2}}^*\}_{i=0}^{N+1},\, \{\overline{f}_i^n\}_{i=0}^{N}\bigr)
\;\longmapsto\;
\{m_i^*\}_{i=0}^{N}.
\]
Here, $\mathcal{T}$ reconstructs the target cell masses $\{m_i^*\}$ at time $t_n$ from the set of backtracked 
cell-face positions $\{x_{i-\frac{1}{2}}^*\}$ and the known cell-averaged values $\{\overline{f}_i^n\}$ at time $t_n$, 
following the integral approximation procedure described above.

The 1D problem is straightforward because the volume corresponds to a 1D interval, simplifying the calculations. However, for 2D problems, the backtracked region has a complex geometry that complicates direct interpolation. Therefore, the 2D cascade remapping method \cite{Nair2002Efficient} is employed to address this issue.

\subsection{Conservative cascade semi-Lagrangian (CCSL) method}
\label{Sec. Desc:CCSL}
For Eq. \eqref{Eq.transport equation} in the 2D case, we define the computational domain as $\Omega=[x_\text{min}, x_\text{max}] \times [y_\text{min}, y_\text{max}]$. For $ N_x, N_y\in \mathbb{N}^*$, we set the grid points as $x_i=x_\text{min}+i\Delta x,\,y_j=y_\text{min}+j\Delta y, \;i,j\in\frac{1}{2}\mathbb{Z}$ with $\Delta x=(x_\text{max}-x_\text{min})/N_x,\;\Delta y=(y_\text{max}-y_\text{min})/N_y$.

Based on the property of mass conservation, considering the $\Omega_{ij}$ $([x_{i-\frac{1}{2}},x_{i+\frac{1}{2}}]\times[y_{j-\frac{1}{2}},y_{j+\frac{1}{2}}])$ in Fig. \ref{Fig_2D CCSL figure}, we have:
\begin{equation}
\label{Eq. mass conservation}
\overline{f}_{ij}^{n+1}\Delta x\Delta y=\iint_{\Omega_{ij}}f(t_{n+1},x,y)\,dxdy = \iint_{\Omega_{ij}^*} f(t_n,x,y) \, dx dy
\end{equation}
where $\overline{f}_{ij}^{n+1}$ denotes the average density of $\Omega_{ij}$ (region $ABCD$) at time $t_{n+1}$ and $\Omega^*_{ij}$ (region $A'B'C'D'$) denotes the backward region of $\Omega_{ij}$ under the backward characteristics at time $t_n$. 

\begin{figure}[htbp]
\centering
\includegraphics[width=0.6\textwidth]{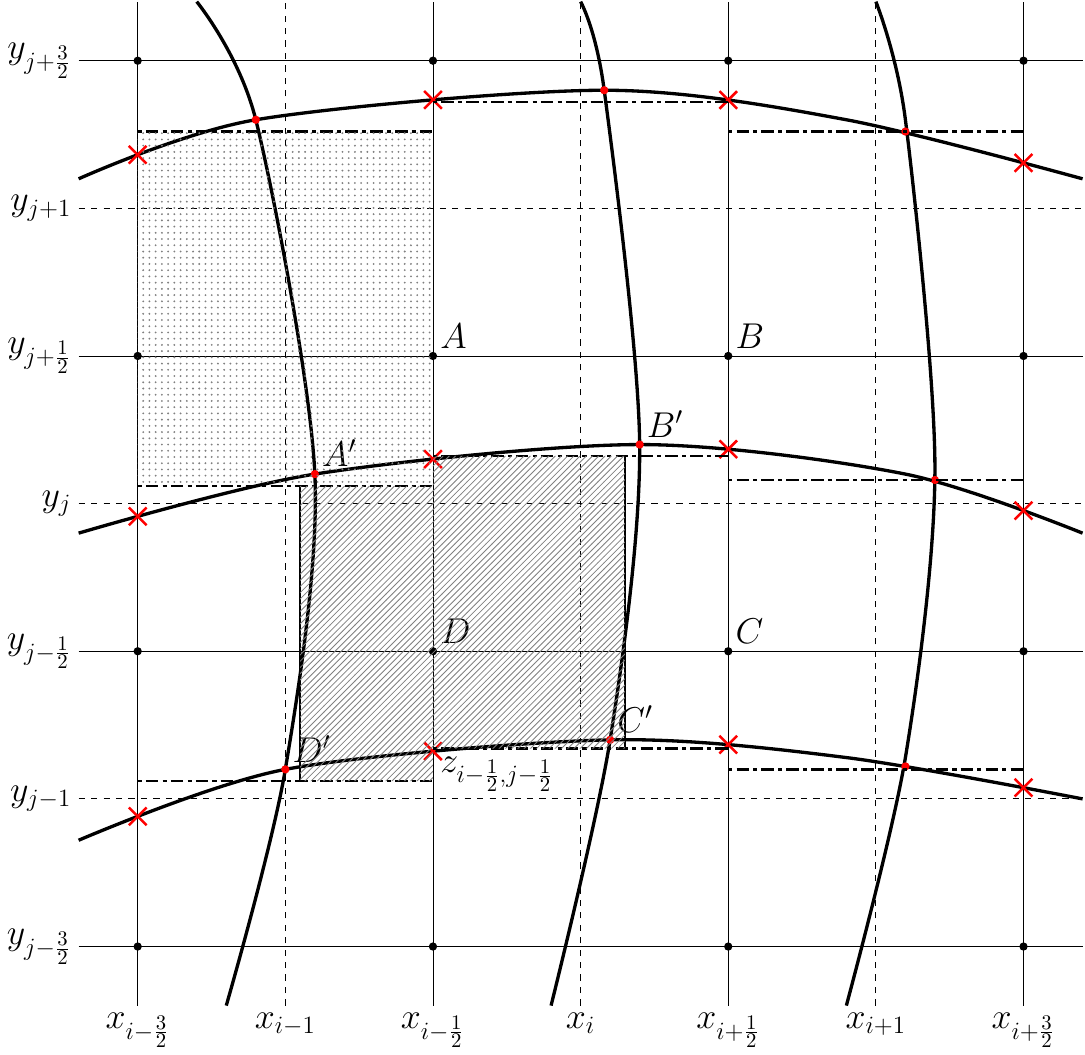}
\caption{Schematic diagram of the CCSL method for 2D case.}
\label{Fig_2D CCSL figure}
\end{figure}

Calculating the mass of the backtracked irregular region using discrete values is challenging. Therefore, we adopt the 2D cascade remapping scheme, which approximates the mass of this region by introducing intermediate cells to bridge the discrete computation and the approximation of the irregular region. 

For clarity, we provide the definitions of the intermediate cell and the backtracked cell:
\begin{itemize}
    \item The intermediate cell is demarcated by vertical half-grid lines on its left and right, and by the averaged y-coordinates of adjacent intermediate grid points at its top and bottom. In particular, an intermediate cell corresponds to the dot-filled region shown in Fig.~\ref{Fig_2D CCSL figure}.
    \item The backtracked cell is formed by clipping and splicing the intermediate cells, thereby approximating the backtracked region. A specific example is the shaded region in Fig. \ref{Fig_2D CCSL figure}, which serves as an approximation to region \( A'B'C'D' \)
\end{itemize}

The first step is to compute the intermediate cell $I_{ij}$ (detailed calculation steps for the intermediate cell will be provided later to ensure clarity of the current remapping workflow). For each column of cells, we use the reconstruction scheme in the 1D CSL method to compute the mass of the intermediate cell, denoted as $m_{I_{ij}}$. On this basis, clipping and splicing operations are performed on each row of intermediate cells to form the backtracked cell \(B_{ij}\), which corresponds to the shaded region in Fig.~\ref{Fig_2D CCSL figure} and provides the approximation to the target irregular region. Once the backtracked cell $B_{ij}$ is constructed, we again apply the 1D CSL reconstruction to derive its mass $m_{B_{ij}}$; finally, substituting this mass into Eq. \eqref{Eq. mass conservation} allows us to solve for $\overline{f}_{ij}^{n+1}$.

To clarify how the intermediate cell $I_{ij}$ — a core component of the above remapping process — is constructed, its calculation steps are detailed as follows:

\begin{enumerate}
    \item Determine intermediate grid points: The intermediate grid point $z_{i-\frac{1}{2}, j-\frac{1}{2}}$ is geometrically defined as the intersection of the backtracked curve from the line $y = y_{j-\frac{1}{2}}$ (at $t_{n+1}$) and the Eulerian grid line $x = x_{i-\frac{1}{2}}$. Its coordinates are given by $(x_{i-\frac{1}{2}}, \widetilde{y}_{i-\frac{1}{2}, j-\frac{1}{2}})$, where the $y$-component $\widetilde{y}_{i-\frac{1}{2}, j-\frac{1}{2}}$ is computed using an interpolation method.

    \item Determine the intermediate cell: With the intermediate grid points $z_{i\pm\frac{1}{2}, j\pm\frac{1}{2}}$ as the fundamental vertices, the intermediate cell $I_{ij}$ is uniquely determined. Specifically, its left and right boundaries are defined by the vertical Euler grid lines $x_{i\pm\frac{1}{2}}$, while its upper and lower boundaries are computed as the average of adjacent intermediate grid point coordinates, satisfying $\overline{y}_{i,j\pm\frac{1}{2}} = \frac{1}{2}(\widetilde{y}_{i-\frac{1}{2}, j\pm\frac{1}{2}} + \widetilde{y}_{i+\frac{1}{2}, j\pm\frac{1}{2}})$.
\end{enumerate}

Building on the intermediate cell construction, the 2D Cascade remapping, and the 1D conservative reconstruction notation defined previously, the complete implementation steps of the 2D CCSL method are systematically organized as follows:

\begin{enumerate}
\item Solve the characteristics to perform backtracking: For the half-grid points $(x_{i-\frac{1}{2}}, y_{j-\frac{1}{2}})$ at time $t_{n+1}$, trace their positions backward to time $t_n$ along the characteristics, yielding the corner points $(x^*_{i-\frac{1}{2},j-\frac{1}{2}}, y^*_{i-\frac{1}{2},j-\frac{1}{2}})$;

\item Generate the intermediate cells $I_{ij}$: Using the corner points from Step 1, construct the intermediate cells $I_{ij}$ following the geometric definition detailed previously;

\item Apply the one-dimensional CSL reconstruction in the column direction. 
For each column $i$:
\begin{itemize}
  \item Collect the intermediate cell boundaries 
  $\{\overline{y}_{i,j-\frac{1}{2}}\}_{j=0}^{N_y+1}$ along the vertical direction;
  \item Extract the cell-averaged values 
  $\{\overline{f}_{i,j}^n\}_{j=0}^{N_y}$ at time $t_n$ for all cells in the column;
  \item Apply the one-dimensional conservative reconstruction operator $\mathcal{T}$, defined as
  \[
  \mathcal{T} :
  \bigl(\{\overline{y}_{i,j-\frac{1}{2}}\}_{j=0}^{N_y+1},\,
  \{\Delta x\, \overline{f}_{i,j}^n\}_{j=0}^{N_y}\bigr)
  \;\longmapsto\;
  \{m_{I_{ij}}\}_{j=0}^{N_y},
  \]
  where the output $\{m_{I_{ij}}\}$ represents the mass of the intermediate cells $I_{ij}$.
\end{itemize}
This procedure is repeated for all columns to compute the mass of the intermediate cells over the entire domain.

\item Apply the one-dimensional CSL reconstruction in the row direction. 
For each row $j$:
\begin{itemize}
  \item Collect the averaged face positions 
  $\{\overline{x}_{i-\frac{1}{2},j}\}_{i=0}^{N_x+1}$, 
  where $\overline{x}_{i-\frac{1}{2},j}
  = \tfrac{1}{2}\bigl(x^*_{i-\frac{1}{2},j-\frac{1}{2}} + x^*_{i-\frac{1}{2},j+\frac{1}{2}}\bigr)$;
  \item Use the intermediate cell averages $\{m_{I_{ij}}/\Delta x\}_{i=0}^{N_x}$ 
  from Step~3 as input cell-averaged values;
  \item Apply the same reconstruction operator $\mathcal{T}$:
  \[
  \mathcal{T} :
  \bigl(\{\overline{x}_{i-\frac{1}{2},j}\}_{i=0}^{N_x+1},\,
  \{m_{I_{ij}}/\Delta x\}_{i=0}^{N_x}\bigr)
  \;\longmapsto\;
  \{m_{B_{ij}}\}_{i=0}^{N_x},
  \]
  where $\{m_{B_{ij}}\}$ denotes the mass of the backtracked cells $B_{ij}$.
\end{itemize}
Finally, substitute the computed backtracked-cell masses into Eq.~\eqref{Eq. mass conservation} 
to obtain the updated cell-averaged values $\{\overline{f}_{ij}^{\,n+1}\}$ at time $t_{n+1}$.
\end{enumerate}

From the above steps, it can be seen that the CCSL method exhibits excellent parallel scalability. Therefore, the application to high dimensional is highly feasible.

For the 2D CCSL method, although it is not restricted by the CFL condition, the method fails to work if the grid is excessively distorted. Herein, we present a necessary condition for the validity of the CCSL method.

\begin{proposition}
\label{Prop_ CCSL condition}
For the 2D CCSL method to be applicable to Eq. \eqref{Eq.transport equation}, the following constraints must be satisfied:
\begin{equation}
    \label{Eq. limitation of CCSL}
    \frac{\Delta a^x}{\Delta x} \cdot \Delta t < 1 \quad (\text{for } x\text{-direction}), \quad \frac{\Delta a^y}{\Delta y} \cdot \Delta t < 1 \quad (\text{for } y\text{-direction}),
\end{equation}
where:
\begin{enumerate}
    \item $a^x_{ij},\,a^y_{ij}$ represent the velocity components at grid point $(x_i, y_j)$;
    \item $ \Delta a^x=\max\left| a_{i+1,j}^x-a_{i,j}^x \right|$ (resp. $ \Delta a^y=\max\left| a_{i,j+1}^y - a_{i,j}^y \right| $) denotes the maximum velocity difference between adjacent grid points in the $x$- (resp. $ y $-) direction;
    \item $\Delta t$ is the time step.
\end{enumerate}
\end{proposition}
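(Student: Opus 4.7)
The plan is to derive the constraints in Eq.~\eqref{Eq. limitation of CCSL} as the monotonicity conditions for the backtracked grid, since the CCSL construction relies on intermediate and backtracked cells that must be non-degenerate and orientation-preserving. My guiding observation is that each of the two one-dimensional CSL reconstructions in Steps~3 and 4 of Section~\ref{Sec. Desc:CCSL} requires its input face positions to be monotonically ordered: the vertical boundaries of the intermediate cells, $\{\overline{y}_{i,j-\frac{1}{2}}\}_j$, and the averaged face positions $\{\overline{x}_{i-\frac{1}{2},j}\}_i$. If this monotonicity fails, the reconstruction operator $\mathcal{T}$ is not well defined because the target subintervals are not proper intervals.

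I would first work in the $x$-direction. Given adjacent half-grid points $(x_{i-\frac{1}{2}},y_{j-\frac{1}{2}})$ and $(x_{i+\frac{1}{2}},y_{j-\frac{1}{2}})$ at time $t_{n+1}$, apply a first-order backtracking step along the characteristic Eq.~\eqref{Eq.characteristics} to obtain
\begin{equation*}
x^*_{i\pm\frac{1}{2},j-\frac{1}{2}} \approx x_{i\pm\frac{1}{2}} - a^x_{i\pm\frac{1}{2},j-\frac{1}{2}}\,\Delta t.
\end{equation*}
Imposing $x^*_{i-\frac{1}{2},j-\frac{1}{2}} < x^*_{i+\frac{1}{2},j-\frac{1}{2}}$ (so that the backtracked faces keep their order, hence the averaged faces $\overline{x}_{i-\frac{1}{2},j}$ do as well) and taking the worst case over $i,j$ gives exactly $\Delta a^x \Delta t < \Delta x$. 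A symmetric argument in the $y$-direction provides the second inequality: at Step~1 of the intermediate-cell construction, $\widetilde{y}_{i-\frac{1}{2},j-\frac{1}{2}}$ must depend monotonically on $j$ to yield a valid stack of intermediate cells, which after the analogous linearization yields $\Delta a^y \Delta t < \Delta y$.

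To wrap up, I would then note that the averaging operations defining $\overline{x}_{i-\frac{1}{2},j}$ and $\overline{y}_{i,j\pm\frac{1}{2}}$ preserve monotonicity once the underlying backtracked/interpolated points are ordered, so the two pointwise conditions suffice for both CSL sweeps to be meaningful. Conversely, if either inequality is violated, one can exhibit a constant-velocity shear flow for which a pair of adjacent backtracked faces crosses, so $\mathcal{T}$ cannot be applied and the CCSL scheme breaks down---establishing necessity.

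The main obstacle I anticipate is being careful that the stated condition is genuinely a necessary condition (rather than just sufficient for the linearization used). This requires checking that the averaging step and the interpolation used to define $\widetilde{y}_{i-\frac{1}{2},j-\frac{1}{2}}$ do not introduce additional slack: specifically, one must verify that crossings of the raw backtracked points at the level of Eq.~\eqref{Eq.characteristics} translate into crossings of the cell faces actually fed into $\mathcal{T}$. The rest is straightforward once the linearization of characteristics is in place.
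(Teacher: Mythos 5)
Your proposal follows essentially the same route as the paper: both arguments reduce applicability of the CCSL scheme to preservation of the ordering of backtracked points required by the one-dimensional reconstruction operator $\mathcal{T}$, linearize the characteristics as $x^* \approx x - a^x\,\Delta t$ (and analogously in $y$), and extract the worst-case condition $\Delta a^x\,\Delta t < \Delta x$, $\Delta a^y\,\Delta t < \Delta y$. Your additional remarks on the averaging step and on necessity via a shear flow go slightly beyond the paper's contraposition argument but do not change the substance, so this is a correct proof in the paper's sense.
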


\begin{proof}
For the intermediate cell construction in the CCSL method to remain valid, 
the spatial ordering of grid points must be preserved after backtracking. 
This requirement arises from the embedded one-dimensional CSL reconstruction, 
which is defined only when the grid-point ordering is maintained throughout the backtracking process.

The necessity of Eq.~\eqref{Eq. limitation of CCSL} can be demonstrated by contraposition. 
Suppose the condition is violated; we then show that the point ordering fails to be preserved.

Consider an order reversal in the $x$-direction, 
which occurs when a grid point is backtracked to the left of its right-hand neighbor:
\[
x_i - a_{i,j}^x \Delta t > x_{i+1} - a_{i+1,j}^x \Delta t,
\]
given that $x_i < x_{i+1}$. 
Rearranging this inequality gives
\[
x_{i+1} - x_i < (a_{i+1,j}^x - a_{i,j}^x) \Delta t.
\]
By defining $\Delta x = x_{i+1} - x_i$ and 
$\Delta a^x = \max|a_{i+1,j}^x - a_{i,j}^x|$, 
the condition for order reversal becomes
\[
\frac{\Delta a^x}{\Delta x} \, \Delta t > 1.
\]
An identical argument applies to the $y$-direction, yielding
\[
\frac{\Delta a^y}{\Delta y} \, \Delta t > 1.
\]
To prevent order reversal and ensure a valid reconstruction, 
the negations of these conditions must hold, 
which leads directly to the constraints in Eq.~\eqref{Eq. limitation of CCSL}:
\[
\frac{\Delta a^x}{\Delta x} \, \Delta t < 1, 
\qquad 
\frac{\Delta a^y}{\Delta y} \, \Delta t < 1.
\]
This completes the proof.
\end{proof}

\section{Error analysis and enhanced methods}
\label{sec_improvement}

\subsection{Error analysis of the 2D CCSL method}
\label{subsec:CCSL}
Different from the 1D CSL method, the 2D CCSL method introduces an additional geometric error. This error arises from approximating the backtracked region through rectangular splicing. When the accuracy of characteristic calculation and interpolation is sufficiently high, the geometric error becomes dominant. Therefore, this subsection focuses on the discussion of geometric error.

Let's refer to the region approximated by the intermediate cell as the true intermediate cell. Before presenting the proof, we summarize the key steps for clarity:

\begin{itemize}
  \item Step 1: Error between the intermediate cell and the true intermediate cell. As shown in Fig.~\ref{Fig_figure_intermediate_cell}, the red dashed region represents the clipped and spliced intermediate cells (i.e., the backtracked cells obtained by the CCSL method), whose mass is computed using the one-dimensional CSL reconstruction. The blue dashed region corresponds to the clipped and spliced true intermediate cells, whose mass can be obtained in the same manner. Based on the previously derived error of the intermediate cell, we can evaluate the mass discrepancy between the red and blue regions.

  \item Step 2: Error between the clipped and spliced true intermediate cells and the true backtracked region. The next step is to estimate the error between the blue region and the true backtracked region. We introduce a mapping that transforms the blue region into a rectangle. Applying the same mapping to the true backtracked region converts it into a domain with straight upper and lower boundaries and curved side boundaries, 
  which facilitates the evaluation of the mass error using the same technique as in Step 1.
\end{itemize}

\begin{figure}[htbp]
\centering
\includegraphics[width=0.35\textwidth]{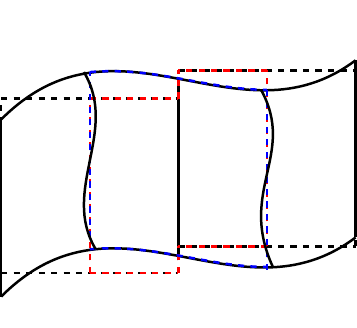}
\caption{Schematic diagram of the backtracked cells (red), the clipped and spliced true intermediate cells (blue), and the true backtracked region (black).}
\label{Fig_figure_intermediate_cell}
\end{figure}

By using the blue region as a bridge, we obtain the mass error between the backtracked cell of the CCSL method and the true backtracked region, thereby determining the error convergence order of the CCSL method.

We now proceed with a detailed proof. The grid setup follows that in Section \ref{Sec. Desc:CCSL}. Without loss of generality, we assume a uniform grid spacing $h=\Delta x = \Delta y$ for the error analysis. First, we need to calculate the mass error between the intermediate cell and the true intermediate cell. Here, a lemma is required to assist in our error estimation.

\begin{lemma}\label{Lemm:integral_error}
    Assume that $p\in C^2([x_1, x_2]) $, and that $f$ is twice continuously differentiable in a neighborhood of the curve 
    $\{(u, p(u)) : u \in [x_1, x_2]\}$.  
    Then the integral error
    \[
    \epsilon = \int_{x_1}^{x_2} \int_{\frac{1}{2}\left[p(x_1) +     p(x_2)\right]}^{p(u)} f(u, v)\, dv\, du
    \]
    admits the asymptotic expansion
    \[
    \epsilon =\left(\frac{1}{12} \frac{\partial f}{\partial u}\, p'+ \frac{1}{24} \frac{\partial f}{\partial v}\, (p')^2-\frac{1}{12} f\, p''\right)\Big|_{(x_1,\,p(x_1))} h^3+ \mathcal{O}(h^4),
    \]
    where $h = x_2 - x_1$.
\end{lemma}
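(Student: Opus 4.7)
The plan is a direct two-variable Taylor expansion, exploiting the fact that the inner integration limit $\bar p := \tfrac12[p(x_1)+p(x_2)]$ is already a centered (midpoint) average. The natural move is therefore to recentre the outer variable at the midpoint $\bar x := \tfrac12(x_1+x_2)$ by the substitution $u = \bar x + s$ with $s \in [-h/2,h/2]$, so that odd powers of $s$ integrate to zero and the accounting collapses to a handful of nonvanishing terms.

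First I would Taylor expand $p$ about $\bar x$ to write
\[
p(u) - \bar p \;=\; s\, p'(\bar x) \;+\; \tfrac{1}{2}\bigl(s^{2} - \tfrac{h^{2}}{4}\bigr) p''(\bar x) \;+\; \mathcal{O}(h^{3}),
\]
where the $-h^{2}/4$ piece comes from $\bar p = p(\bar x) + \tfrac{h^{2}}{8} p''(\bar x) + \mathcal{O}(h^{4})$. Next, for each fixed $u$ I would Taylor expand the inner integrand vertically about $v = \bar p$, obtaining
\[
\int_{\bar p}^{p(u)} f(u,v)\,dv
= \bigl(p(u)-\bar p\bigr) f(u,\bar p) + \tfrac{1}{2}\bigl(p(u)-\bar p\bigr)^{2} \frac{\partial f}{\partial v}(u,\bar p) + \mathcal{O}\bigl((p(u)-\bar p)^{3}\bigr),
\]
and then expand $f(u,\bar p)$ and $\tfrac{\partial f}{\partial v}(u,\bar p)$ in $u$ about $\bar x$.

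Substituting and multiplying out, every integrand becomes a polynomial in $s$ with coefficients depending on $f$, $p$ and their derivatives at $(\bar x,\bar p)$, plus an $\mathcal{O}(h^{3})$ remainder. Integrating over $s\in[-h/2,h/2]$ kills the odd-$s$ terms by symmetry, and the surviving even-$s$ pieces use only $\int_{-h/2}^{h/2} s^{2}\,ds = h^{3}/12$. Three contributions of order $h^{3}$ then remain: $\tfrac{1}{12}\, p'\, f_{u}$ from $s\, p' \cdot s\, f_{u}$, the combination $\bigl(\tfrac{1}{24}-\tfrac{1}{8}\bigr) p''\, f = -\tfrac{1}{12}\, p''\, f$ from the $\tfrac{1}{2}(s^{2}-h^{2}/4) p''\cdot f$ piece, and $\tfrac{1}{24}(p')^{2} f_{v}$ from the quadratic term $\tfrac12(p(u)-\bar p)^{2} f_{v}$. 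All higher-order pieces are $\mathcal{O}(h^{4})$ by the $C^{2}$ assumption. Finally I would note that shifting the evaluation point from $(\bar x,\bar p)$ to $(x_{1},p(x_{1}))$ perturbs each coefficient by $\mathcal{O}(h)$, hence the whole expression by $\mathcal{O}(h^{4})$, which is absorbed in the remainder.

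The computation itself is routine; the only real obstacle is bookkeeping, namely keeping track of which mixed terms contribute at order $h^{3}$ and which are already $\mathcal{O}(h^{4})$. Two minor points I would be careful about: (i) the $-h^{2}/4$ correction inside $p(u)-\bar p$ is essential, since without it the $p''\, f$ coefficient would come out as $+\tfrac{1}{24}$ instead of $-\tfrac{1}{12}$; and (ii) the $C^{2}$ regularity of $p$ and $f$ suffices because the remainder from $p$'s Taylor expansion enters multiplied by $f$ and, after the symmetric integration, contributes only at $\mathcal{O}(h^{4})$.
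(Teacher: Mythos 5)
Your proposal is correct and follows essentially the same route as the paper's proof: a direct Taylor expansion of the thickness $p(u)-\tfrac12[p(x_1)+p(x_2)]$ and of $f$ inside the double integral, followed by term-by-term integration to isolate the $h^3$ coefficient, yielding exactly $\bigl(\tfrac{1}{12}f_u p'+\tfrac{1}{24}f_v (p')^2-\tfrac{1}{12}f p''\bigr)h^3$. The only difference is cosmetic: you center the expansion at the midpoint to kill odd terms by symmetry (and then shift the evaluation point to $(x_1,p(x_1))$ at $\mathcal{O}(h^4)$ cost), whereas the paper rescales to the unit square with base point $x_1$ and verifies the expansion symbolically.
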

\begin{proof}
This proof is presented in \ref{Ap. Proof of Lemma.integral}.
\end{proof}

Consider the 2D characteristic equations for $\mathbf{X} = (X_1, X_2)$ (with $X_1, X_2$ denoting the $x$- and $y$-components of $\mathbf{X}$):  
\begin{equation*}
\left\{
    \begin{aligned}
        &\frac{d\mathbf{X}}{dt} = \mathbf{a}(\mathbf{X}, t), \\
        &\mathbf{X}(t_{n+1}) = \mathbf{x}_0 = (x_0, y_0), \quad (x_0, y_0) \in \Omega.
    \end{aligned} 
\right.
\end{equation*}  
Let $\mathbf{X}(t_n; t_{n+1}, \mathbf{x}_0) = \mathbf{x}^* = (x^*, y^*)$ denote the solution at $t = t_n$.  
\begin{itemize}
\item Fix $y_0 = \xi$: The relationship between $x_0$ and $x^*$ is $X_1(t_n; t_{n+1}, (x_0, \xi)) = x^*$, denoted as $\mathcal{X}_{\xi}(x_0) = x^*$. Define $p(x, \xi): \Omega \to \mathbb{R}$ by  
  \begin{equation}	
      \label{Eq. p(x)}
      p(x, \xi) = X_2\Big(t_n; t_{n+1},(\mathcal{X}_{\xi}^{-1}(x), \xi)\Big),
  \end{equation}  
  where $\mathcal{X}_{\xi}^{-1}$ is the inverse of $\mathcal{X}_{\xi}$.  

\item Fix $x_0 = \eta$: The relationship between $y_0$ and $y^*$ is $X_2(t_n; t_{n+1}, (\eta, y_0)) = y^*$, denoted as $\mathcal{Y}_{\eta}(y_0) = y^*$. Define $q(y, \eta): \Omega \to \mathbb{R}$ by  
  \begin{equation*}
      q(y, \eta) = X_1\Big(t_n; t_{n+1}, (\eta, \mathcal{Y}_{\eta}^{-1}(y)\Big),
  \end{equation*}  
  where $\mathcal{Y}_{\eta}^{-1}$ is the inverse of $\mathcal{Y}_{\eta}$.
\end{itemize}

When the characteristics are smooth, after defining the above functions, we can observe that for the grid line $y=y_j$ at time $t_{n+1}$, its backtracked curve is $p(x,y_j)$ at time $t_n$. Similarly, for the grid line $x=x_i$, its backtracked curve is $q(y,x_i)$.
  
\begin{proposition}
\label{Prop_intermediate}
When the characteristics satisfy the condition specified in Eq. \eqref{Eq. limitation of CCSL}, the following proposition holds: if the distribution function $ f(x,y) $ and the functions $ p(x,\eta) $ (that enclose the true intermediate cell) are both twice continuously differentiable, then the mass error between the true intermediate cell and the intermediate cell is $ \mathcal{O}(h^4)$.
\end{proposition}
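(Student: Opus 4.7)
The plan is to express the mass error as an integral over the symmetric difference between the intermediate cell and the true intermediate cell, then apply Lemma~\ref{Lemm:integral_error} to the two crescent-shaped regions along the top and bottom boundaries. The key observation is that although each crescent individually contributes an $\mathcal{O}(h^3)$ error, the two leading coefficients differ only by $\mathcal{O}(h)$, so the signed sum collapses to $\mathcal{O}(h^4)$.

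First I would note that both cells share the vertical sides $x=x_{i\pm\frac{1}{2}}$, so the mass difference reduces to
\[
\Delta m_{ij} \;=\; \int_{x_{i-\frac{1}{2}}}^{x_{i+\frac{1}{2}}} \!\left[\int_{\overline{y}_{i,j+\frac{1}{2}}}^{p(x,\,y_{j+\frac{1}{2}})} \!f(x,y)\,dy \;-\; \int_{\overline{y}_{i,j-\frac{1}{2}}}^{p(x,\,y_{j-\frac{1}{2}})} \!f(x,y)\,dy\right] dx.
\]
By the geometric definition of the intermediate cell, $\overline{y}_{i,j\pm\frac{1}{2}} = \tfrac{1}{2}\bigl[p(x_{i-\frac{1}{2}},y_{j\pm\frac{1}{2}}) + p(x_{i+\frac{1}{2}},y_{j\pm\frac{1}{2}})\bigr]$, which is exactly the constant lower limit demanded by Lemma~\ref{Lemm:integral_error}. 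Applying the lemma to each inner integral with $x_1=x_{i-\frac{1}{2}}$, $x_2=x_{i+\frac{1}{2}}$, and $p(\cdot)$ replaced by $p(\cdot,\,y_{j\pm\frac{1}{2}})$ yields
\[
\epsilon_\pm \;=\; G\!\left(x_{i-\frac{1}{2}},\,y_{j\pm\frac{1}{2}}\right) h^3 + \mathcal{O}(h^4),
\]
where $G(x,\xi)$ is the smooth function built from $f$, $\partial_x p(\cdot,\xi)$, and $\partial_{xx} p(\cdot,\xi)$ supplied by the lemma and evaluated at $(x,\,p(x,\xi))$.

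To finish I would subtract the two expansions. Since the characteristics depend smoothly on initial data---the regularity of $p$ in both arguments follows from that of the velocity field, and the invertibility of $\mathcal{X}_\xi$ is guaranteed by the grid-preservation condition in Eq.~\eqref{Eq. limitation of CCSL}---the map $\xi\mapsto G(x_{i-\frac{1}{2}},\xi)$ is Lipschitz, so $G(x_{i-\frac{1}{2}},y_{j+\frac{1}{2}}) - G(x_{i-\frac{1}{2}},y_{j-\frac{1}{2}}) = \mathcal{O}(h)$. Therefore $\Delta m_{ij} = \epsilon_+ - \epsilon_- = \mathcal{O}(h^4)$. The main obstacle is securing joint regularity of $p(x,\xi)$ in the $\xi$ variable (which the statement only asserts in $x$) and showing that the $\mathcal{O}(h^4)$ remainders in Lemma~\ref{Lemm:integral_error} are uniform in $\xi$ across the two applications; once these points are handled by the continuous dependence of the Vlasov characteristics on initial data, the cancellation of the leading $h^3$ terms reduces to a one-line Taylor comparison.
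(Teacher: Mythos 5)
Your proof follows essentially the same route as the paper's: the mass discrepancy is reduced to the two boundary crescents between $\overline{y}_{i,j\pm\frac{1}{2}}$ and $p(x,y_{j\pm\frac{1}{2}})$, Lemma~\ref{Lemm:integral_error} is applied to each (the averaged endpoint value being exactly the constant limit the lemma requires), and the leading $\mathcal{O}(h^3)$ coefficients cancel to $\mathcal{O}(h^4)$ because they are evaluations of the same smooth coefficient function $\mathcal{E}$ at points a distance $h$ apart in $y$ — the paper does this by a Taylor expansion of $\mathcal{E}$ where you invoke Lipschitz continuity. The only additional item in the paper's proof is the reconstruction interpolation error $R^{2d+1}$ of the numerically computed intermediate-cell mass, which it assumes to be $\mathcal{O}(h^4)$, so your argument is otherwise the same.
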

\begin{proof}
    This proof is presented in \ref{Ap. Proof of Proposition Prop_intermediate}.
\end{proof}
We now need to define a mapping to "straighten" the true intermediate cell.
\begin{proposition}
\label{Prop_T}
As illustrated in Fig. \ref{Fig_T}, there exists a bijective mapping $T:\Omega\subset\mathbb{R}^2\to\hat{\Omega}\subset\mathbb{R}^2$ satisfying:
\begin{enumerate}
    \item  The mapping $T$ converts both the upper and lower smooth non-intersecting boundaries $p_1(x)$ and $p_2(x)$ into a pair of parallel straight edges $y=0$ and $y=\Delta x$;
    \item  $\pi_x(T(x,y)) = \pi_x(x,y)$ for all $(x,y)\in\Omega$, where $\pi_x(x,y)=x$ is the projection operator.
\end{enumerate}
\end{proposition}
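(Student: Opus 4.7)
The plan is to construct $T$ explicitly rather than argue for existence abstractly, exploiting the strong constraint that $T$ must fix the $x$-coordinate. Since condition~2 forces $T(x,y)=(x,\phi(x,y))$ for some scalar function $\phi$, the proposition reduces to producing, for each $x$, a bijection of the fiber $\{y:(x,y)\in\Omega\}$ onto a corresponding fiber in $\hat{\Omega}$ that sends $p_1(x)\mapsto 0$ and $p_2(x)\mapsto \Delta x$.

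First I would fix notation: assume (after relabeling if necessary) $p_1(x)<p_2(x)$ on the $x$-range of $\Omega$, which is permissible because the two boundaries are smooth and non-intersecting, hence their difference is of constant sign by continuity. Then I would define
\[
T(x,y)\;=\;\Bigl(x,\;\Delta x\cdot \frac{y-p_1(x)}{p_2(x)-p_1(x)}\Bigr),
\qquad \hat{\Omega}\;=\;T(\Omega),
\]
and take this as the candidate map.

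Next I would verify the three requirements in turn. Condition~2 is immediate from the first component. For condition~1, a direct substitution shows $\phi(x,p_1(x))=0$ and $\phi(x,p_2(x))=\Delta x$, so the two boundary curves are sent into the horizontal lines $y=0$ and $y=\Delta x$; since $T$ preserves $x$, each entire curve is mapped into a full segment of the target line, giving a pair of parallel straight edges as required. Bijectivity then follows fiberwise: for each fixed $x$, $\phi(x,\cdot)$ is an affine function of $y$ with nonzero slope $\Delta x/(p_2(x)-p_1(x))$, hence a bijection from $[p_1(x),p_2(x)]$ onto $[0,\Delta x]$; combining these fiberwise bijections across $x$ (together with the identity in the first coordinate) yields a global bijection $\Omega\to\hat{\Omega}$ with explicit inverse $T^{-1}(x,\hat{y})=(x,\,p_1(x)+\hat{y}(p_2(x)-p_1(x))/\Delta x)$.

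There is no real obstacle to existence here; the only points that require a bit of care are the sign assumption $p_2>p_1$ (which I would justify by the ``non-intersecting'' hypothesis together with connectedness of the $x$-range) and an implicit regularity remark: because this mapping will later be used to transport the mass-error analysis of Step~1 to the curved boundary situation of Step~2, I would note in passing that $T$ inherits $C^2$-regularity from $p_1,p_2$, so that Lemma~\ref{Lemm:integral_error} can be reapplied in the straightened coordinates. That observation, rather than the construction itself, is the substantive content one must not forget to record.
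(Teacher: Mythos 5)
Your construction is exactly the map the paper uses, namely $T(x,y)=\bigl(x,\tfrac{y-p_1(x)}{p_2(x)-p_1(x)}\Delta x\bigr)$, verified in the same way (fiberwise affine bijectivity, boundary straightening, and $x$-preservation). The extra remarks on the sign of $p_2-p_1$ and the inherited $C^2$-regularity are sensible elaborations but do not change the argument, so this matches the paper's proof.
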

\begin{figure}[htbp]
    \centering
    \includegraphics[width=0.4\textwidth]{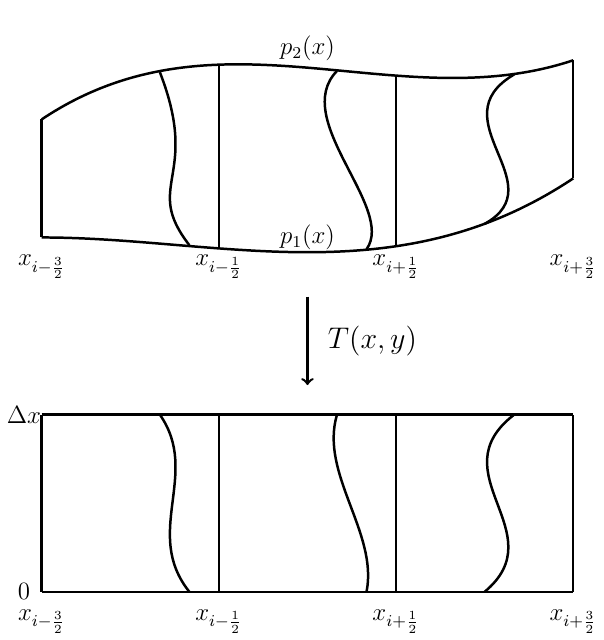} 
    \caption{Coordinate transformation in 2D CCSL method} 
    \label{Fig_T} 
\end{figure}
\begin{proof}
The bijection $T$ is explicitly given by:
\[
T(x,y) =(x,c_x(y)) =\left(x, \frac{y - p_1(x)}{p_2(x) - p_1(x)}\Delta x\right),
\]
where $c_x(y)=\frac{y-p_1(x)}{p_2(x)-p_1(x)}\Delta x$. This mapping satisfies:

\begin{enumerate}
    \item \textbf{Bijectivity}: For each fixed $x$, $T$ is a linear and bijective transformation in $y$ between $[p_1(x),p_2(x)]$ and $[0,\Delta x]$.
    
    \item \textbf{Boundary Straightening}:
    \begin{align*}
        T(x,p_1(x)) &= (x, 0) ,\\
        T(x,p_2(x)) &= (x, \Delta x),
    \end{align*}
    converting curved boundaries to parallel straight lines.
    
    \item \textbf{$x$-preservation}: $\pi_x(T(x,y)) = \pi_x(x,y)$.
    
\end{enumerate}
\end{proof}

Having established the propositions above, we now proceed to prove the order of truncation error for the CCSL method.

\begin{proposition}
\label{Prop_ CCSL error order}
The numerical update computed by the CCSL scheme satisfies
\[
\left|
\frac{1}{h^2} \iint_{\Omega_{k,l}} f(x,y,t^{n+1}) \, dx\,dy
- \overline{f}_{k,l}^{\,n+1}
\right| = \mathcal{O}(h^2),
\]
for all cell indices $k = 0,\ldots,N_x$ and $l = 0,\ldots,N_y$, 
provided that the corresponding cell averages $\{\overline{f}_{k,l}^{\,n}\}$ are exact, and that $f$, $p$, and $q$ are twice continuously differentiable functions.
\end{proposition}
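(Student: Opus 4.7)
The plan is to bound the single-step mass error
\[
\Delta M_{k,l} := \left| m_{B_{k,l}} - \iint_{\Omega_{k,l}^*} f(t_n, x, y)\,dx\,dy \right|
\]
by $\mathcal{O}(h^4)$; the claim then follows from exact mass conservation $\iint_{\Omega_{k,l}} f(t_{n+1}) = \iint_{\Omega_{k,l}^*} f(t_n)$ and the factor $h^{-2}$ in the cell average. I would follow the two-step decomposition sketched at the beginning of Section~\ref{subsec:CCSL}, inserting the clipped--spliced \emph{true} intermediate cells (the blue region, with total mass $M_{k,l}^{\mathrm{blue}}$) as a bridge between the CCSL backtracked cell $B_{k,l}$ (red) and $\Omega_{k,l}^*$ (black), and applying the triangle inequality.

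Step~1 (red vs.\ blue) is handled by Proposition~\ref{Prop_intermediate}, which controls each pair (CCSL intermediate cell $I_{i,l}$ vs.\ true intermediate cell) at $\mathcal{O}(h^4)$. The stability condition in Proposition~\ref{Prop_ CCSL condition} guarantees that $B_{k,l}$ overlaps only an $\mathcal{O}(1)$ number of columns, so summing these per-column contributions---together with the 1D CSL clipping error in the row direction, which is of strictly higher order once the reconstruction degree $2d+1$ is large enough---yields $|m_{B_{k,l}} - M_{k,l}^{\mathrm{blue}}| = \mathcal{O}(h^4)$.

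Step~2 (blue vs.\ black) is the geometric heart of the argument. I apply the straightening bijection $T$ of Proposition~\ref{Prop_T} with $p_1(x)=p(x,y_{l-1/2})$ and $p_2(x)=p(x,y_{l+1/2})$. A direct computation using the defining relation $p(x^*_{k\pm 1/2, l\pm 1/2}, y_{l\pm 1/2}) = y^*_{k\pm 1/2, l\pm 1/2}$ shows that every backtracked corner is sent to height $\hat y = 0$ or $\hat y = \Delta x$. Consequently the transformed blue region is exactly the axis-aligned rectangle $[\overline x_{k-1/2,l}, \overline x_{k+1/2,l}] \times [0, \Delta x]$, while the transformed true backtracked region keeps the \emph{same} straight top and bottom but has curved side boundaries $\hat q_{k\pm 1/2}(\hat y)$ whose endpoints obey $\overline x_{k\pm 1/2,l} = \tfrac12(\hat q_{k\pm 1/2}(0) + \hat q_{k\pm 1/2}(\Delta x))$. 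This is precisely the setup of Lemma~\ref{Lemm:integral_error} with $x$ and $y$ interchanged, applied to the pulled-back density $\tilde f(\hat x, \hat y) = f(T^{-1}(\hat x, \hat y))\,|\det DT^{-1}|$, which remains $C^2$ under the standing hypotheses on $f, p$. Applied separately to the left and right sides, the lemma gives each a leading contribution of order $h^3$, and---mirroring the top/bottom cancellation that drives Proposition~\ref{Prop_intermediate}---their signed combination cancels to order $h^4$ because $\hat q_{k+1/2}(\hat y) - \hat q_{k-1/2}(\hat y) = \mathcal{O}(h)$ together with the smoothness of the coefficients in $\hat q$.

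The main obstacle will be making that left/right cancellation rigorous: one has to expand the leading coefficient $\tfrac1{12}\tilde f_u\hat q' + \tfrac1{24}\tilde f_v(\hat q')^2 - \tfrac1{12}\tilde f\hat q''$ from Lemma~\ref{Lemm:integral_error} at both $\hat q_{k-1/2}$ and $\hat q_{k+1/2}$ and bound their difference by the $\mathcal{O}(h)$ separation of the two side curves---an argument entirely parallel to the top/bottom case underlying Proposition~\ref{Prop_intermediate}, but burdened with the extra bookkeeping introduced by the Jacobian of $T$. A secondary subtlety is the $\mathcal{O}(h)$ mismatch in the effective 1D row density at column boundaries in Step~1, which forces the primitive-interpolation degree to be chosen large enough that this kink contributes strictly below the $\mathcal{O}(h^4)$ budget; once $d\ge 1$, standard estimates for the 1D CSL operator $\mathcal{T}$ suffice.
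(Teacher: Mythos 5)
Your proposal is correct and follows essentially the same route as the paper's proof: the same triangle-inequality decomposition through the clipped-and-spliced true intermediate cells, with Proposition~\ref{Prop_intermediate} handling the red-versus-blue comparison and the straightening map of Proposition~\ref{Prop_T} combined with Lemma~\ref{Lemm:integral_error} (applied with the roles of $x$ and $y$ interchanged, and with the two side-boundary $h^3$ terms cancelling exactly as in Eq.~\eqref{Eq. error_Min}) handling the blue-versus-black comparison, before dividing the resulting $\mathcal{O}(h^4)$ mass error by $h^2$. The only difference is presentational: you spell out the left/right cancellation and the corner-mapping bookkeeping that the paper compresses into the remark that the error has the same form as Eq.~\eqref{Eq. error_Min}.
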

\begin{proof}
    Without loss of generality, we consider the case shown in the Fig. \ref{Fig_CCSL_error} (for more general cases, detailed derivations and verification are deferred to \ref{Ap. General cases of CCSL method} to avoid disrupting the logical flow of the main text). In this scenario, we have:
    \begin{align}
        &\left| \frac{1}{h^2} \iint_{\Omega_{k,l}} f(x,y,t^{n+1}) \, dx\,dy - \overline{f}_{k,l}^{n+1} \right|\notag\\
        =&\frac{1}{h^2}\left| \iint_{\Omega_{k,l}} f(x,y,t^{n+1}) \, dx\,dy - h^2\overline{f}_{k,l}^{n+1} \right|\notag\\
        =&\frac{1}{h^2} \left|\iint_{A'B'C'D'} f(x,y,t^{n}) \, dx\,dy - h^2\overline{f}_{k,l}^{n+1}\right|\notag\\
        =&\frac{1}{h^2} \left|M_{\text{true}} - M_{\text{CCSL}}\right|,
        \label{Eq. CCSL_error}
    \end{align}
    where $M_{\text{true}}$ denotes the true mass of the backtracked region and $M_{\text{CCSL}}$ denotes the numerically approximated cell mass computed using CCSL method.
    \begin{figure}[htbp]
    \centering
    \includegraphics[width=0.4\textwidth]{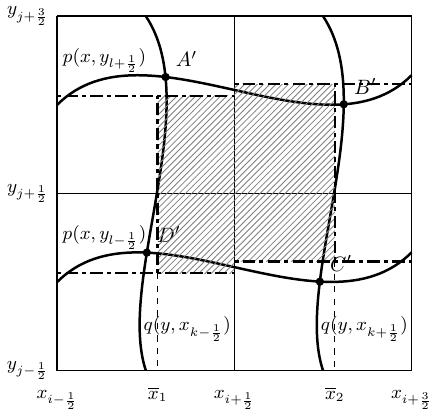} 
    \caption{CCSL scheme grid configuration with standard and reconstructed coordinates.} 
    \label{Fig_CCSL_error} 
\end{figure}

    To compute the numerical approximation $M_{\text{CCSL}}$, we have
    \begin{equation}
    \label{Eq. mass of CCSL}
        \begin{aligned}
            M_{\text{CCSL}}
            &= m_{I_{i+1,l}}^{(1)} + m_{I_{i,l}} - m_{I_{i,l}}^{(1)} \\
            &= G_{i+1,l}^{3}(\overline{x}_2) + m_{I_{i,l}} - G_{i,l}^{3}(\overline{x}_1),
        \end{aligned}
    \end{equation}
    where $m_{I_{i,l}}$ denotes the total mass within the intermediate cell $I_{i,l}$ bounded by $x_{i-\frac{1}{2}}$ and $x_{i+\frac{1}{2}}$. The quantity $m_{I_{i,l}}^{(s)}$ represents the mass of the $s$-th clipped segment 
    of the intermediate cell $I_{i,l}$ (counted from left to right) that appears during the backtracked-cell reconstruction process, as illustrated in Fig.~\ref{Fig_CCSL_error}. 

    The averaged horizontal coordinates of the backtracked corner points are given by
    \[
    \overline{x}_1 = \tfrac{1}{2}(x_{A'} + x_{D'}), 
    \qquad 
    \overline{x}_2 = \tfrac{1}{2}(x_{B'} + x_{C'}).
    \]
    Here, $G_{i,l}^{3}(x)$ denotes the cubic reconstruction function in the $x$-direction, constructed from the local intermediate-cell masses $\{m_{I_{i-1,l}},\, m_{I_{i,l}},\, m_{I_{i+1,l}}\}$. It satisfies the following interpolation conditions:
    \[
    \begin{aligned}
    &G_{i,l}^{3}(x_{i-\frac{3}{2}}) = -m_{I_{i-1,l}}, \qquad
    &&G_{i,l}^{3}(x_{i-\frac{1}{2}}) = 0,\\
    &G_{i,l}^{3}(x_{i+\frac{1}{2}}) = m_{I_{i,l}} - m_{I_{i-1,l}}, \qquad
    &&G_{i,l}^{3}(x_{i+\frac{3}{2}}) = m_{I_{i+1,l}} + m_{I_{i,l}} - m_{I_{i-1,l}}.
    \end{aligned}
    \]
    The function $G_{i,l}^{3}(x)$ thus represents a one-dimensional third-order polynomial used to interpolate the cumulative mass distribution along each row. For higher-order reconstructions, $G_{i,l}^{r}(x)$ is defined analogously using $r$ neighboring cell masses, as detailed in \ref{Ap. Reconstruction of function G}.
    
    From Proposition \ref{Prop_intermediate}, it follows that the mass error between an intermediate cell and its corresponding true intermediate cell is $ \mathcal{O}(h^4) $, i.e.,
    \begin{align*}
        m_{I_{i,l}}=\int_{x_{i-\frac{1}{2}}}^{x_{i+\frac{1}{2}}}\int_{p(x,y_{l-\frac{1}{2}})}^{p(x,y_{l+\frac{1}{2}})}f(x,y,t^n)\,dy\,dx+\mathcal{O}(h^4).
    \end{align*}

    Let $ M^* $ denote the mass of the region formed by clipping and splicing the true intermediate cells; then
    \begin{equation}
        \label{Eq. true intermediate mass}
        M^*=\int_{\overline{x}_1}^{\overline{x}_2}\int_{p(x,y_{l-\frac{1}{2}})}^{p(x,y_{l+\frac{1}{2}})}f(x,y,t^n)\,dy\,dx.
    \end{equation}
    In this case, the error between $M^*$ and $M_{\text{CCSL}}$ is given by
    \begin{equation*}
        M_{\text{CCSL}}-M^*=\epsilon=\mathcal{O}(h^4),
    \end{equation*}
    where $\epsilon$ encompasses both interpolation error and geometric error. When the interpolation accuracy is sufficiently high, the geometric error dominates.

    By Proposition \ref{Prop_T}, we can transform the region $A'B'C'D'$ by the mapping $T$; in this case, the true mass of the backtracked region $M_{\text{true}}$ is expressed as
    \begin{align}
    \label{Eq. after mapping true}
        M_{\text{true}}
        &= \iint_{A'B'C'D'} f(x,y,t^{n}) \, dx\,dy \\
        &= \iint_{T:A'B'C'D'} f\!\bigl(x, c_x^{-1}(y), t^n\bigr)\, |J_T|\, dy\, dx \notag\\
        &= \int_{y_1^*}^{y_2^*}
           \int_{q^*(y,x_{k-\frac{1}{2}})}^{q^*(y,x_{k+\frac{1}{2}})}
           f^*(x,y,t^n)\, dx\, dy. \notag
    \end{align}
    Here,
    \[
        f^*(x,y,t^n)
        = f\!\bigl(x,\,c_x^{-1}(y),\,t^n\bigr)\, |J_T|,
    \]
    denotes the transformed distribution function under the mapping $T$. Based on the property that the mapping $ T $ preserves the horizontal coordinate, we have
    \begin{equation*}
        \begin{aligned}
        q^*(y_2^*,x_{k+\frac{1}{2}})=x_{A'},\,q^*(y_2^*,x_{k-\frac{1}{2}})=x_{B'},\\
        q^*(y_1^*,x_{k-\frac{1}{2}})=x_{C'},\,q^*(y_1^*,x_{k+\frac{1}{2}})=x_{D'}.\\
        \end{aligned}
    \end{equation*}
    
    Similarly, applying the mapping $ T $ to the right-hand side of Eq. \eqref{Eq. true intermediate mass}, we have
    \begin{equation}
        \label{Eq. after mapping true inter}
        M^*=\int_{y_1^*}^{y_2^*}\int_{\overline{x}_1}^{\overline{x}_2}f^*(x,y,t^n)\,dy\,dx.
    \end{equation}
    
    Subtracting Eq. \eqref{Eq. after mapping true} from Eq. \eqref{Eq. after mapping true inter}, we find that their error form is identical to that of Eq. \eqref{Eq. error_Min}. By Lemma \ref{Lemm:integral_error},
    \begin{equation*}
        \big|M_{\text{true}}-M^*\big|=\mathcal{O}(h^4).
    \end{equation*}
    
    Thus,
        \begin{equation}
        \label{Eq. error between CCSL and true}
            \big|M_{\text{true}}-M_{\text{CCSL}}|\le\big|M_{\text{true}}-M^*|+\big|M^*-M_{\text{CCSL}}|=\mathcal{O}(h^4).
        \end{equation}
        
    Substituting Eq. \eqref{Eq. error between CCSL and true} into Eq. \eqref{Eq. CCSL_error},
    \begin{equation*}
        \left| \frac{1}{h^2} \iint_{\Omega_{k,l}} f(x,y,t^{n+1}) \, dx\,dy - \overline{f}_{k,l}^{n+1} \right|=\frac{1}{h^2} \left|M_{\text{true}} - M_{\text{CCSL}}\right|\le \frac{1}{h^2}\mathcal{O}(h^4)=\mathcal{O}(h^2).
    \end{equation*}
    Therefore, the  error of CCSL is $\mathcal{O}(h^2)$, which completes the proof.
\end{proof}

\subsection{Enhanced CCSL method}

\subsubsection{Freestream preservation}
The CCSL method, in its original form, fails to satisfy the freestream preservation property. This deficiency can lead to numerical instabilities, as it introduces spurious sources or sinks in the discretization of a uniform flow field. As emphasized by Colella et al. (2011, p. 2953) \cite{COLELLA20112952}, ''Freestream preservation is an important requirement for the discretization of conservation laws in mapped coordinates. This property ensures that a uniform flow is unaffected by the choice of mapping and discretization.'' Therefore, improvements to the CCSL method are mandatory. A question arises: if we use smooth curves instead of straight lines to approximate the backtracked region, can we achieve freestream preservation while reducing geometric errors. In fact, our analysis shows that under the premise of retaining the original CCSL framework—i.e., performing interpolation dimension by dimension—enhancing geometric approximation does not effectively reduce geometric errors. We refer to the CCSL method that uses curve approximation as the CCSL' method.

Without loss of generality, we only need to compare the mass errors of the intermediate cells between the CCSL method and the CCSL' method. Using the same proof method and mesh as in Proposition \ref{Prop_intermediate}, for $ F_s,\;s=1,2,$ in Eq. \eqref{Eq. primitive function}, the mass of the intermediate cell calculated by the CCSL' method, denoted as $ M_{\text{CCSL'}} $, is 
\begin{equation*}
    \begin{aligned}
        M_{\text{CCSL'}}=&\int_0^1F_2(p(x_{i-\frac{1}{2}}+sh,y_{l+\frac{1}{2}})\,ds+m_{i,j}-\int_0^1F_1(p(x_{i-\frac{1}{2}}+sh,y_{l-\frac{1}{2}})\,ds\\
        =&\frac{1}{h}\int_{x_{i-\frac{1}{2}}}^{x_{i+\frac{1}{2}}}\int_{x_{i-\frac{1}{2}}}^{x_{i+\frac{1}{2}}}\int_{p(u,y_{l-\frac{1}{2}})}^{p(u,y_{l+\frac{1}{2}})}f(x,y)\,dy\,dx\,du+R^{2d+1},
    \end{aligned}
\end{equation*}
where $R^{2d+1}$ is the interpolation error of order $2d+1$.

At this point, the error between $ M_{\text{CCSL}'} $ and $ M_{\text{true}} $ is:
\begin{equation}
    \label{Eq. error of CCSL'}
    \begin{aligned}
    |M_{\text{CCSL'}}-M_{\text{true}}|=&\Bigg|\frac{1}{h}\int_{x_{i-\frac{1}{2}}}^{x_{i+\frac{1}{2}}}\int_{x_{i-\frac{1}{2}}}^{x_{i+\frac{1}{2}}}\int_{p(u,y_{l-\frac{1}{2}})}^{p(u,y_{l+\frac{1}{2}})}f(x,y)\,dy\,dx\,du+R^{2d+1}\\
    &-\int_{x_{i-\frac{1}{2}}}^{x_{i+\frac{1}{2}}}\int_{p(x,y_{l-\frac{1}{2}})}^{p(x,y_{l+\frac{1}{2}})} f(x,y)\,dy\,dx\Bigg|\\
    \leq&\Bigg|\frac{1}{h}\int_{x_{i-\frac{1}{2}}}^{x_{i+\frac{1}{2}}}\int_{x_{i-\frac{1}{2}}}^{x_{i+\frac{1}{2}}}\int_{p(x,y_{l+\frac{1}{2}})}^{p(u,y_{l+\frac{1}{2}})}f(x,y)\,dy\,dx\,du\\
    &-\frac{1}{h}\int_{x_{i-\frac{1}{2}}}^{x_{i+\frac{1}{2}}}\int_{x_{i-\frac{1}{2}}}^{x_{i+\frac{1}{2}}}\int_{p(x,y_{l-\frac{1}{2}})}^{p(u,y_{l-\frac{1}{2}})}f(x,y)\,dy\,dx\,du\Bigg|+|R^{2d+1}|.
    \end{aligned}
\end{equation}

Similarly, we introduce Lemma \ref{Lemm:integral_error_curve}:
\begin{lemma}
    \label{Lemm:integral_error_curve}
    Let $p\in C^2([x_1, x_2])$ and $f\in C^2$ in a neighborhood of the curve $ (u, p(u)) $ $u\in[x_1,x_2]$. The integral error  
    \[
    \epsilon = \frac{1}{h}\int_{x_1}^{x_2}\int_{x_1}^{x_2} \int^{p(t)}_{p(u)} f(u, v) \, dv\,dt \, du
    \]  
    admits the asymptotic expansion:  
    \[
    \begin{aligned}
    \epsilon = -\frac{1}{12}\frac{\partial f}{\partial u}p'\bigg|_{x=x_1,y=p(x_1)} h^3 + \mathcal{O}(h^4),
    \end{aligned}
    \]  
    where $ h = x_{2} - x_{1} $.  
\end{lemma}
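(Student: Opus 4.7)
The plan is to mirror the strategy used in Lemma \ref{Lemm:integral_error}: substitute $u = x_1 + \xi$ and $t = x_1 + \eta$ with $\xi, \eta \in [0,h]$, Taylor expand every ingredient around $x_1$, and evaluate the resulting polynomial moments over $[0,h]^2$. The distinguishing feature here is the extra averaging in $t$, which introduces a symmetry under $\xi \leftrightarrow \eta$ that kills several otherwise dominant contributions; as a consequence I have to carry the expansion one order further than a naive power count suggests.

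First I would treat the innermost integral by expanding $f(u,v)$ in $v$ about $v = p(u)$:
$$
\int_{p(u)}^{p(t)} f(u,v)\,dv
= f(u,p(u))\,[p(t)-p(u)]
+ \tfrac{1}{2}\,\partial_v f(u,p(u))\,[p(t)-p(u)]^2
+ \mathcal{O}(h^3).
$$
Next I would expand $p(t)-p(u) = p'(x_1)(\eta-\xi) + \tfrac{1}{2}p''(x_1)(\eta^2-\xi^2) + \mathcal{O}(h^3)$ and, using the chain rule, $f(u,p(u)) = f_0 + \xi\,\bigl(\partial_u f + \partial_v f\,p'\bigr) + \mathcal{O}(\xi^2)$, where all derivatives and $f_0$ are evaluated at $(x_1, p(x_1))$.

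Substituting these expansions, multiplying by $1/h$, and integrating over $(\xi,\eta)\in[0,h]^2$, the bookkeeping reduces to four moment integrals. The leading $\mathcal{O}(h)$ piece $f_0\,p'(\eta-\xi)$ and the $\mathcal{O}(h^2)$ piece $\tfrac{1}{2}f_0\,p''(\eta^2-\xi^2)$ both integrate to zero by the antisymmetry of $\eta-\xi$ under $\xi\leftrightarrow\eta$. Only two terms survive to $\mathcal{O}(h^3)$: the cross term $\xi\,(\partial_u f+\partial_v f\,p')\,p'\,(\eta-\xi)$ coming from $f(u,p(u))[p(t)-p(u)]$, and the quadratic term $\tfrac{1}{2}\partial_v f\,(p')^2(\eta-\xi)^2$ coming from $\tfrac{1}{2}\partial_v f\,[p(t)-p(u)]^2$. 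Using the explicit moments
$$
\frac{1}{h}\iint_{[0,h]^2}\xi(\eta-\xi)\,d\xi\,d\eta = -\frac{h^3}{12},
\qquad
\frac{1}{h}\iint_{[0,h]^2}(\eta-\xi)^2\,d\xi\,d\eta = \frac{h^3}{6},
$$
these two contributions evaluate to $-\tfrac{1}{12}(\partial_u f + \partial_v f\,p')\,p'\,h^3$ and $+\tfrac{1}{12}\partial_v f\,(p')^2\,h^3$ respectively.

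The key observation, and the main obstacle, is that the two independently generated $\partial_v f\,(p')^2\,h^3$ contributions cancel exactly, leaving only $-\tfrac{1}{12}\partial_u f\,p'\,h^3$. This cancellation is precisely the mechanism that distinguishes the curved (averaged) approximation from the chord-based one of Lemma \ref{Lemm:integral_error}, where no such cancellation is available, and it underlies the later claim that replacing straight segments by curves in the CCSL$'$ variant cannot improve the order of the geometric error. The remaining neglected terms are cubic in $\xi,\eta$ with bounded coefficients by the assumed $C^2$ regularity of $f$ and $p$, so after dividing by $h$ and integrating over $[0,h]^2$ they are absorbed into $\mathcal{O}(h^4)$, which completes the expansion.
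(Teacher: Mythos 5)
Your proof is correct: the change of variables, the Taylor expansions about $(x_1,p(x_1))$, the moment values $\tfrac{1}{h}\iint_{[0,h]^2}\xi(\eta-\xi)\,d\xi\,d\eta=-\tfrac{h^3}{12}$ and $\tfrac{1}{h}\iint_{[0,h]^2}(\eta-\xi)^2\,d\xi\,d\eta=\tfrac{h^3}{6}$, and the exact cancellation of the two $\partial_v f\,(p')^2$ contributions all check out, giving $-\tfrac{1}{12}\,\partial_u f\,p'\big|_{(x_1,p(x_1))}h^3+\mathcal{O}(h^4)$ as stated. This is essentially the same Taylor-expansion-plus-moments argument the paper uses in its proof of Lemma \ref{Lemm:integral_error} (the paper states Lemma \ref{Lemm:integral_error_curve} without a separate proof, implicitly appealing to that technique), and your handling of the remainder terms is at the same level of rigor with respect to the assumed $C^2$ regularity as the paper's own Appendix A.
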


Using Lemma \ref{Lemm:integral_error_curve} to evaluate Eq. \eqref{Eq. error of CCSL'}, we obtain:
\begin{equation*}
    |M_{\text{CCSL'}}-M_{\text{true}}|=\mathcal{O}(h^4).
\end{equation*}

It can be observed that the error order is identical to that in Proposition~\ref{Prop_intermediate}. Therefore, a more accurate geometric approximation does not reduce the geometric error. This occurs because the reconstruction is performed through successive one-dimensional interpolations, which inherently cannot capture multi-dimensional correlations within the flow field. Furthermore, achieving freestream preservation with a more complex geometric approximation would necessitate a genuinely multi-dimensional interpolation scheme, which is computationally expensive and thus not cost-effective. Consequently, we retain the original CCSL framework and instead fine-tune the boundaries of the intermediate and backtracked cells to ensure freestream preservation.

We denote the volume of the intermediate cell as $ V_{I_{ij}} $, with its corresponding upper and lower boundaries given by $\overline{y}_{i,j\pm\frac{1}{2}}$. For the backtracked cell, we denote its volume as $ V_{B_{ij}} $ (the symbol can be adjusted based on specific notation conventions) and its corresponding left and right boundaries as $\overline{x}_{i\pm\frac{1}{2},j} $. Under periodic boundary conditions, the following properties hold:
\begin{equation}
\label{Eq. inter property}
    \displaystyle\sum_{i=0}^{N_x-1}\displaystyle\sum_{j=0}^{N_y-1}V_{I_{ij}}=N_xN_y\Delta x\Delta y,\;\displaystyle\sum_{j=0}^{N_y-1}V_{I_{ij}}=N_y\Delta x\Delta y,\,\;\displaystyle\sum_{i=0}^{N_x-1}V_{I_{ij}}=N_x\Delta x\Delta y.\
\end{equation}

The first two properties are naturally satisfied. We fix the upper boundary $\overline{y}_{i,l+\frac{1}{2}} $ of the intermediate cells in a specific middle layer $l$, and then adjust the boundaries of intermediate cells from this middle layer toward both sides.
Let's denote $\Delta V_j = \displaystyle\sum_{i=0}^{N_x-1} V_{I_{ij}} - N_x \Delta x \Delta y \;,j=0,...,N_y-1$. For $j$ ranging from $l+1$ to $N_y-1$ (and from $ l $ to $0$):
\begin{equation*}
\overline{y}_{i,j+\frac{1}{2}}^* = \overline{y}_{i,j+\frac{1}{2}} + \frac{1}{N_x} \sum_{c=l+1}^j \Delta V_c \;\left( \text{and }\; \overline{y}_{i,j-\frac{1}{2}}^* = \overline{y}_{i,j+\frac{1}{2}} - \frac{1}{N_x} \sum_{c=j}^l \Delta V_c \right).
\end{equation*}
After the above adjustments, the intermediate cells satisfy Eq. \eqref{Eq. inter property}.

Subsequently, for each row, we fix the right boundary $\overline{x}_{k+\frac{1}{2},j} $ of a specific middle backtracked cell. For $i$ ranging from $k+1$ to $N_x-1 $ (and from $ k $ to $ 0 $), we recalculate each boundary $\overline{x}_{i+\frac{1}{2},j}$ ($\overline{x}_{i-\frac{1}{2},j}$) such that the volume of the backtracked cell satisfies $ V_{B_{ij}} = \Delta x \Delta y $. Although this adjustment method does not achieve high precision, it is extremely simple and effective. Its effectiveness will be demonstrated in the subsequent numerical test.
\subsubsection{Maximum principle limiter}

Preserving the maximum principle is essential for the Conservative Semi-Lagrangian (CSL) method. In the classical CSL framework, the interpolation function $F_i^{2d+1}(x)$ determines the local remapping accuracy. When the interpolation order is linear ($d = 0$), the scheme automatically satisfies the maximum principle, but suffers from excessive numerical diffusion. Conversely, high-order interpolations ($d \ge 1$) provide higher accuracy but often violate the maximum principle by generating nonphysical undershoots or overshoots. To combine the advantages of both schemes, we introduce a blending limiter that adaptively mixes the linear and high-order reconstructions.

The modified interpolation function is defined as
\begin{equation*}
    F_i^*(x) = \alpha_i F_i^{1}(x) + (1-\alpha_i) F_i^{2d+1}(x), \qquad d \ge 1,
\end{equation*}
where $\alpha_i \in [0,1]$ is a local smoothness indicator that controls the balance between monotonicity and accuracy. When $\alpha_i = 1$, the method reduces to the linear CSL interpolation, strictly preserving the maximum principle; when $\alpha_i = 0$, the full high-order interpolation is recovered.

To ensure that $F_i^*(x)$ respects the maximum principle, $\alpha_i$ is computed from the most restrictive local constraint among all subintervals of $(x_{i-\frac{1}{2}}, x_{i+\frac{1}{2}})$. Let $\{\widetilde{x}_c\}_{c=0}^{k+1}$ denote the set of auxiliary points defined as
\begin{equation*}
    \begin{cases}
        \widetilde{x}_0 = x_{i-\frac{1}{2}}, \\
        \widetilde{x}_c = x^*_{j+\frac{1}{2}+c}, & 1 \le c \le k, \\
        \widetilde{x}_{k+1} = x_{i+\frac{1}{2}},
    \end{cases}
\end{equation*}
where $x^*_{j+\frac{1}{2}+c}$ are the backtracked points lying within the cell $(x_{i-\frac{1}{2}}, x_{i+\frac{1}{2}})$, and $k$ is their count.

For each subinterval $(\widetilde{x}_c, \widetilde{x}_{c+1})$, a local coefficient $\alpha_{ic}$ is determined by
\begin{align*}
    \alpha_{ic} = \min \Bigg\{ \alpha \in (0,1) \Bigg| \;
    m_{\min}\frac{\widetilde{x}_{c+1}-\widetilde{x}_c}{\Delta x}
    \le \alpha \!\left[ F_i^{1}(\widetilde{x}_{c+1}) - F_i^{1}(\widetilde{x}_c) \right] \\
    + (1-\alpha)\!\left[ F_i^{2d+1}(\widetilde{x}_{c+1}) - F_i^{2d+1}(\widetilde{x}_c) \right]
    \le m_{\max}\frac{\widetilde{x}_{c+1}-\widetilde{x}_c}{\Delta x}
    \Bigg\},
\end{align*}
where $m_{\min}$ and $m_{\max}$ are the minimum and maximum physical masses corresponding to the initial extrema of $f$, i.e.,
$m_{\min} = f_{\min}\Delta x \Delta y$ and $m_{\max} = f_{\max}\Delta x \Delta y$. 
Finally, the limiting coefficient is obtained as
\begin{equation*}
    \alpha_i = 
    \begin{cases}
        1, & k = 0, \\
        \displaystyle\max_{0 \leq c \leq k} \alpha_{ic}, & k \ge 1.
    \end{cases}
\end{equation*}

This limiter enforces the maximum principle without sacrificing the conservative property of the CSL method, as both $F_i^{1}(x)$ and $F_i^{2d+1}(x)$ are constructed in a conservative form. 
In smooth regions, $\alpha_i$ remains small, allowing high-order accuracy; near discontinuities, $\alpha_i$ approaches unity, suppressing unphysical oscillations.

\section{Numerical tests}
\label{sec_results}
In the numerical tests, unless specified, a 5th-degree Lagrange interpolation is used. This corresponds to the case \(2d+1 = 5\) in Section~\ref{Sec. SL}, i.e., \(d = 2\) in the reconstruction framework. Meanwhile, the improved CCSL method—with the freestream-preserving correction and the maximum-principle limiter—is employed for all simulations.
\subsection{2D Linear transport equations}

In this subsection, two types of linear transport equations are utilized to assess the performance of the CCSL method. Particularly, smooth initial conditions are employed to verify the order of accuracy, which ensures that its numerical accuracy is consistent with the theoretical analysis provided in Proposition \ref{Prop_ CCSL error order}.

Consider the equation
\begin{equation}
\label{Eq. linear equation}
    \begin{aligned}
        \frac{\partial f}{\partial t}+\frac{\partial}{\partial x}(a_1f)+\frac{\partial}{\partial y}(a_2f)=0,\;(x,y)\in[x_{\text{min}},x_{\text{max}}]\times[y_{\text{min}},y_{\text{max}}],
    \end{aligned}
\end{equation}
where $\mathbf{a}=(a_1,a_2)$ denotes the velocity field (divergence free), and $[x_{\text{min}}, x_{\text{max}}] \times [y_{\text{min}}, y_{\text{max}}]$ represents the computational domain.

\subsubsection{Rotation}
\label{Sec. rotation}
For the rotation test, zero boundary conditions are employed. The computational domain is $[-\pi,\pi]^2$. The initial condition is defined as:
\begin{equation}
\label{Eq. linear equation initial}
    f(x,y,0)=
    \begin{cases}
        r_0\cos(\dfrac{\pi r(x,y)}{2r_0})^6,\;&r(x,y)<r_0,\\
        0&\text{otherwise,}
    \end{cases}
\end{equation}
where $r_0=0.3\pi$, $r(x,y)=\sqrt{(x-x_0)^2+(y-y_0)^2}$ and $(x_0,y_0)=(0.3\pi,0)$. The velocity field is set as:
\begin{equation}
\label{Eq. rotation}
    \mathbf{a}(x, y) = \bigg(-\frac{1}{2}\pi y,\frac{1}{2}\pi x\bigg).
\end{equation}

In Table~\ref{Tab. rotation order}, we present the $L^2$ errors and the corresponding orders of accuracy at times $t=1$ and $t=4$. For this test, the characteristics can be backtracked exactly, so no temporal error is introduced; the numerical error therefore comes purely from spatial discretization, consisting of the geometric error and the interpolation error. Since a fifth-degree Lagrange interpolation is employed, the geometric error becomes the dominant component. At $t=1$, the observed order of accuracy is close to two. Although numerical errors may accumulate over time, the results indicate that the overall accuracy at this time remains second order, which is consistent with the expected behavior of the CCSL method.
At $t=4$, a higher apparent order is observed. We conjecture that this behavior is related to the fact that the flow returns to its initial configuration after a full rotation, causing a partial cancellation of errors. A more detailed analysis of this phenomenon will be pursued in future work.

\begin{table}[htbp]
\centering
\caption{Rotation test: $L^2$ errors and corresponding orders of accuracy of the CCSL method for the Eq. \eqref{Eq. rotation} with the initial condition \eqref{Eq. linear equation initial} at $t=1,4$. The time step $\Delta t=0.25$.}
\label{Tab. rotation order}
\begin{tabular}{c c c c c c c c}
\Xhline{2pt}
\multirow{2}{*}{mesh} & \multirow{2}{*}{CFL} & & \multicolumn{2}{c }{$t=1$} & & \multicolumn{2}{c }{$t=4$}  \\
\cline{4-5} \cline{7-8}
 & & & $L^2$ error & order & &  $L^2$ error & order  \\
\hline
40$\times$40 & 11.10 & & 5.39e-03 & -- & & 1.58e-02 & -- \\

80$\times$80 & 22.20 & & 5.83e-04 & 3.21 & & 4.13e-04 & 5.26 \\

160$\times$160 & 44.40 & & 1.45e-04 & 2.01 & & 7.11e-06 & 5.86  \\

320$\times$320 & 88.80 & & 3.63e-05 & 2.00 & & 3.19e-07 & 4.48  \\

640$\times$640 & 177.59 & & 9.06e-06 & 2.00 & & 2.51e-08 & 3.67  \\

1280$\times$1280 & 355.19 & & 2.29e-06 & 1.98 & & 8.62e-10 & 4.86  \\
\Xhline{2pt}
\end{tabular}
\end{table}

\subsubsection{Swirling deformation flow}
\label{Sec. Swirling}
For the swirling deformation flow test, we adopt the same boundary conditions, initial conditions, and computational domain as those used in Section \ref{Sec. rotation}. The velocity field is defined as follows:
\begin{equation}
\label{Eq. Swirling}
    \mathbf{a}(x,y)=\Big(-2\pi \cos\! ^2(\frac{x}{2})\sin(y)g(t),2\pi\sin(x)\cos\! ^2(\frac{y}{2})g(t)\Big),
\end{equation}
where $g(t)=\cos(\frac{\pi t}{T})$ and $T=2$.

Similarly, in Table~\ref{Tab. sdf order}, we present the $L^2$ errors and the corresponding orders of accuracy at times $t=1$ and $t=4$. In this case, the characteristic equations cannot be solved exactly, so temporal errors are present; the time step is fixed at $\Delta t = 0.125$. At $t=1$, when the deformation of the profile reaches its maximum, the observed order of accuracy remains close to two, consistent with the behavior observed in the rotation test. At $t=4$, the flow returns to its initial configuration, which leads to a noticeably smaller error
compared with the result at $t=1$ under the same grid resolution. We conjecture that this improvement is caused by a partial cancellation of errors after the full cycle. However, due to the presence of temporal errors, the overall order observed at $t=4$ is reduced. A more detailed analysis of this behavior may be pursued in future work.

\begin{table}[htbp]
\centering
\caption{Swirling deformation flow: $L^2$ errors and corresponding orders of accuracy of the CCSL method for the Eq. \eqref{Eq. Swirling} with the initial condition \eqref{Eq. linear equation initial} at $t=1,4$. The time step $\Delta t=0.125$.}
\label{Tab. sdf order}
\begin{tabular}{c c c c c c c c}
\Xhline{2pt}
\multirow{2}{*}{mesh} & \multirow{2}{*}{CFL} & & \multicolumn{2}{c }{$t=1$} & & \multicolumn{2}{c }{$t=4$}  \\
\cline{4-5} \cline{7-8}
 & & & $L^2$ error & order & &  $L^2$ error & order  \\
\hline
40$\times$40 & 5 & & 3.55e-02 & -- & & 5.48e-02 & -- \\

80$\times$80 & 10 & & 6.07e-03 & 2.55 & & 4.13e-03 & 3.73 \\

160$\times$160 & 20 & & 1.40e-03 & 2.12 & & 3.33e-04 & 3.63 \\

320$\times$320 & 40 & & 3.45e-04 & 2.02 & & 9.65e-05 & 1.79  \\

640$\times$640 & 80 & & 8.36e-05 & 2.05 & & 3.25e-05 & 1.57  \\

1280$\times$1280 & 160 & & 1.95e-05 & 2.10 & & 1.32e-05 &  1.3 \\
\Xhline{2pt}
\end{tabular}
\end{table}

To further assess the capability of the CCSL method in capturing solutions with complex structures, let's consider the initial condition to use a discontinuous one:
\begin{equation}
\label{Eq. discontinuous initial}
    f(x,y,0)=
    \begin{cases}
        1,\;& \begin{aligned}[t]
            &\sqrt{x^2+(y-0.5\pi)^2}\le r_0 \;\text{and}\;\\
            &(|x|\ge0.05\pi\;\text{or}\;y\ge0.5\pi),
        \end{aligned}\\
        1-\dfrac{1}{r_0}\sqrt{x^2+(y+0.5\pi)^2},\;&\sqrt{x^2+(y+0.5\pi)^2}\le r_0,\\
        \dfrac{1+\cos\!\Big(\dfrac{\pi}{r_0}\sqrt{(x+0.5\pi)^2+y^2}\Big)}{4},\;&\sqrt{(x+0.5\pi)^2+y^2}\le r_0,\\
        0,\;&\text{otherwise,}
    \end{cases}
\end{equation}
where $r_0=0.3\pi$. In Fig. \ref{Fig_SDF}, we plot the contour maps of the solution under the CCSL method at times $t = 0$, $t = 1$, and $t = 2$. It can be observed that the CCSL method can well capture complex structures and maintain the maximum principle.
\begin{figure}[htbp]
    \centering
    \begin{subfigure}[b]{0.3\textwidth}
        \includegraphics[width=\textwidth]{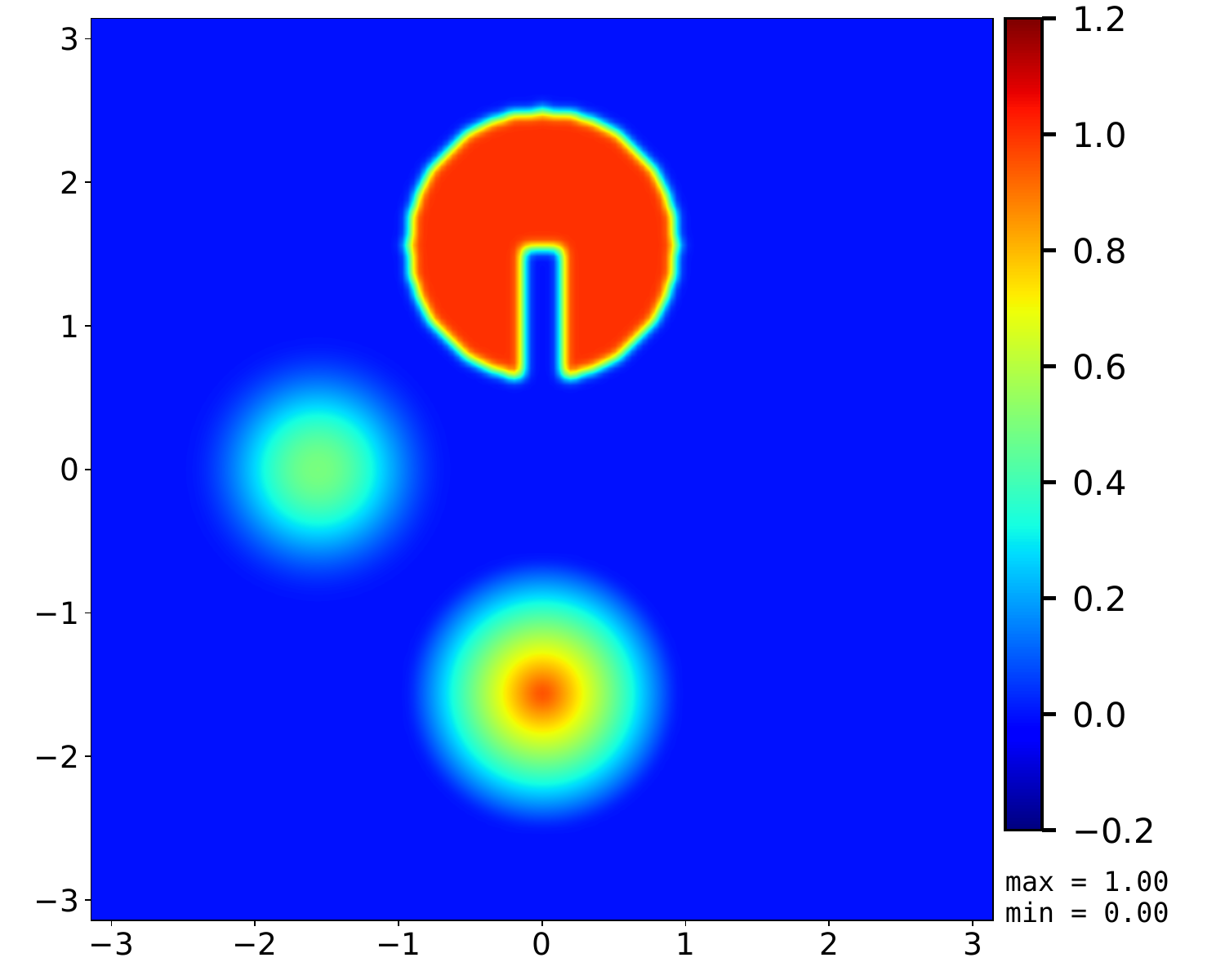}
        \caption{t=0.0}
    \end{subfigure}
    \begin{subfigure}[b]{0.3\textwidth}
        \includegraphics[width=\textwidth]{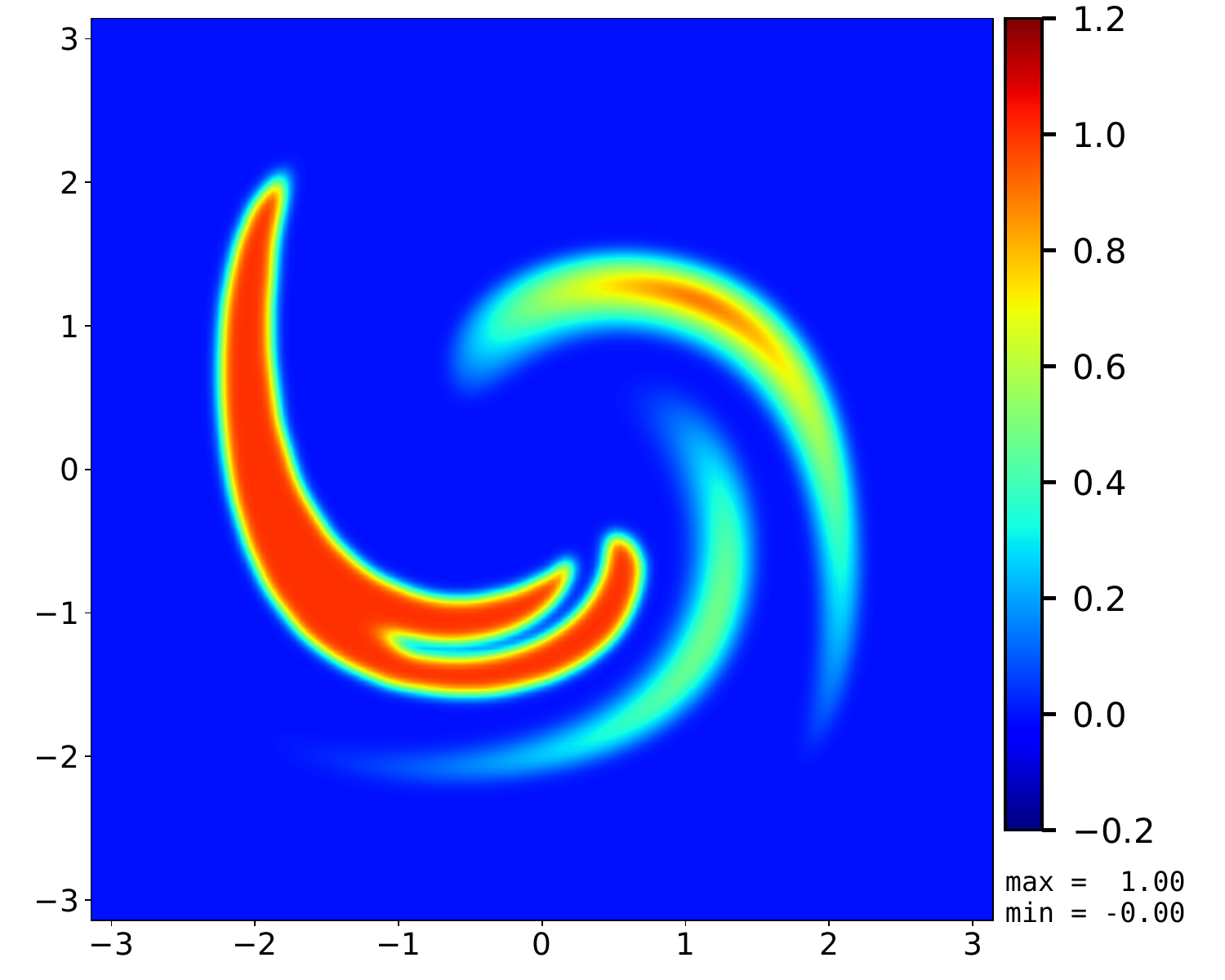}
        \caption{t=1.0}
    \end{subfigure}
    \begin{subfigure}[b]{0.3\textwidth}
        \includegraphics[width=\textwidth]{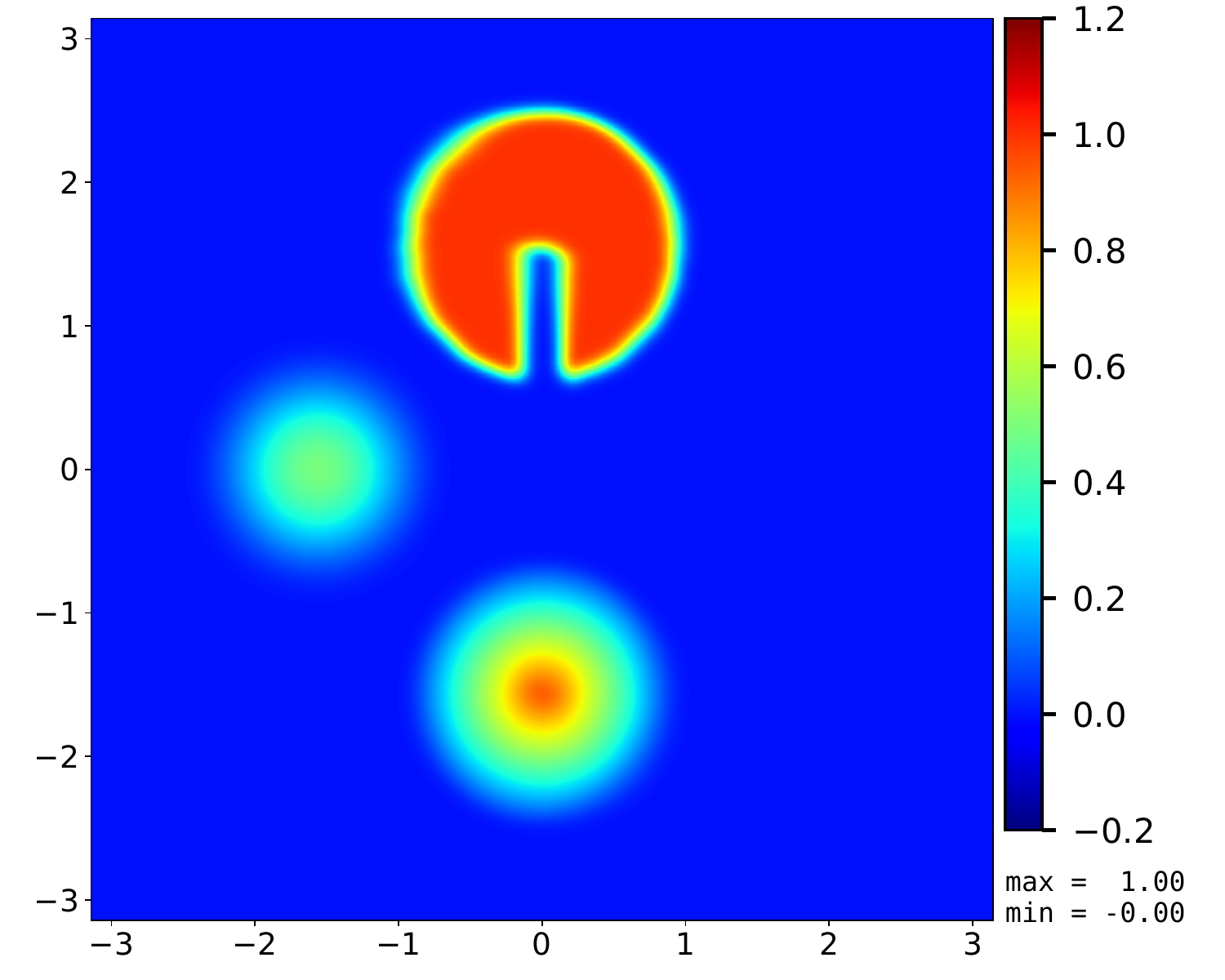}
        \caption{t=2.0}
    \end{subfigure}
    \caption{Swirling deformation flow experiment with complex initial condition Eq. \eqref{Eq. discontinuous initial} ($N_x = N_y = 160,\;\Delta t=0.03125$, $\text{CFL}=5.0$), corresponding to times $t = 0$, $t = 1$, and $t = 2$}
    \label{Fig_SDF}
\end{figure}

\subsection{Guiding center model}
Considering the guiding center model, which is capable of describing highly magnetized plasma in the transverse plane, given by:
\begin{equation*}
    \begin{cases}
        \dfrac{\partial f}{\partial t}+\nabla\cdot(\mathbf{E}^{\perp}f)=0,\\
        -\Delta\phi=f,\;\mathbf{E}^{\perp}=\left(-\dfrac{\partial \phi}{\partial y},\dfrac{\partial\phi}{\partial x}\right),
    \end{cases}
\end{equation*}
where $\mathbf{E}$ is the electric field.


Unlike the linear transport equation, the Poisson equation needs to be solved for this case to obtain the electric field. Furthermore, as the electric field varies with time, accurate characteristic backtracking cannot be achieved. Here, the predictor-corrector method is adopted to solve the system of equations: the Fast Fourier Transform (FFT) solver is used for solving the Poisson equation under periodic boundary conditions, while the finite difference method is employed for Dirichlet boundary conditions. Assuming $f^n_{i,j}$ is known and $f^{n+1}_{i,j}$ needs to be computed, the specific steps are as follows:
\begin{enumerate}
    \item Solve the Poisson equation using $f^n_{i,j}$ to obtain $\mathbf{E}^n_{i,j}$;
    \item Utilize $\mathbf{E}^n_{i,j}$ for backtracking over half a time step $\frac{\Delta t}{2}$, converting $f^n_{i,j}$ to $f^*_{i,j}$;
    \item Solve the Poisson equation using $f^*_{i,j}$ to obtain $\mathbf{E}^*_{i,j}$;
    \item Use $\mathbf{E}^*_{i,j}$ for backtracking over a full time step $\Delta t$, converting $f^n_{i,j}$ to $f^{n+1}_{i,j}$.
\end{enumerate}
One time-step iteration is completed through the above steps.

We consider the diocotron test case \cite{J2009Non}. The initial condition is given by
\begin{equation*}
    f(x,y,0)=
    \begin{cases}
        (1+\varepsilon\cos(l\theta))\exp{(-4(r-6.5)^2)},\;&r^-\le\sqrt{x^2+y^2}\le r^+,\\
        0,\;&\text{otherwise,}
    \end{cases}
\end{equation*}
where $r = \sqrt{x^2 + y^2}$ and $\theta=\text{atan}2(y,x)$. We set the parameters as $\varepsilon = 0.1$, $r^-=5$, $r^+ = 8$, $l=6$. A square computational domain $[-15, 15]^2$ is adopted, with a uniform grid resolution of $1024 \times 1024$ in the $x$- and $y$-directions. Zero boundary conditions are imposed for both $f$ and the electric potential $\phi$; accordingly, the Poisson equation is solved using the finite difference method. We fix the time step size as $\Delta t = 1$ ($\text{CFL}\approx30$). 

In Fig. \ref{Fig_diocotron curve}, we present the time evolution of the total mass, $L^1$-norm, $L^2$-norm, and total energy conservation for the four methods: the CCSL method, the improved CCSL method, the splitting CSL method, and the BSL method. In Fig. \ref{Fig_mass_error}, compared with the BSL method, the CCSL, the improved CCSL, and the splitting CSL methods all preserve mass conservation. In Fig. \ref{Fig_l1norm_error}, since the improved CCSL scheme preserves the maximum principle, it maintains the $L^1$-norm conservation, exhibiting a significantly better performance than the other three schemes. In Fig. \ref{Fig_l2norm_error}, compared with the splitting CSL method, the other three approaches employ non-splitting formulations, yielding more accurate results. In Fig. \ref{Fig_energy_error}, the three mass-conservative methods show similar behaviors and consistently outperform the BSL method.  

\begin{figure}[htbp]
    \centering
    \begin{subfigure}[b]{0.4\textwidth}
        \centering
        \includegraphics[width=\linewidth]{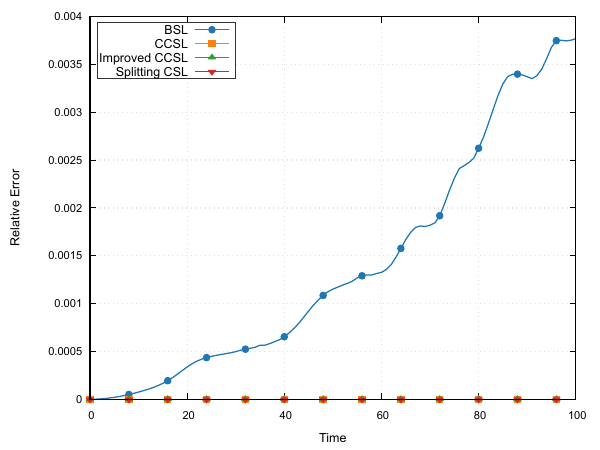}
        \caption{Mass.}
        \label{Fig_mass_error}
    \end{subfigure}
     \vspace{0.5em}
    \begin{subfigure}[b]{0.4\textwidth}
        \centering
        \includegraphics[width=\linewidth]{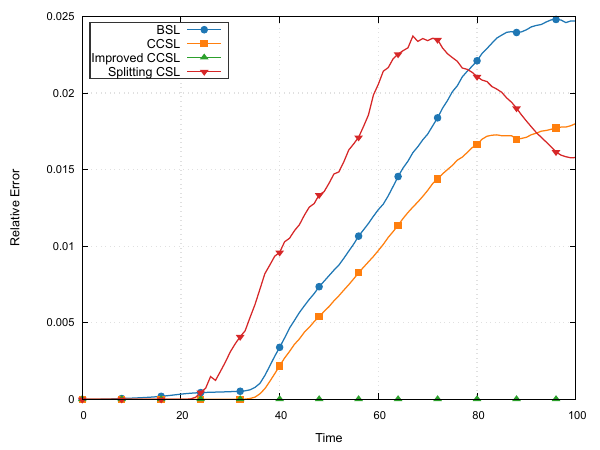}
        \caption{$L^1$-norm.}
        \label{Fig_l1norm_error}
    \end{subfigure}
    
    \begin{subfigure}[b]{0.4\textwidth}
        \centering
        \includegraphics[width=\linewidth]{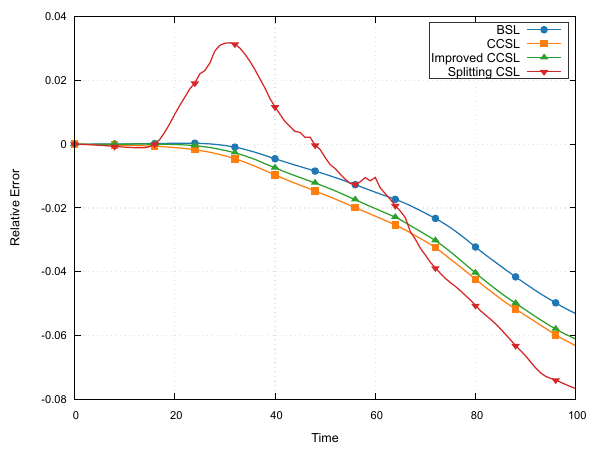}
        \caption{$L^2$-norm.}
        \label{Fig_l2norm_error}
    \end{subfigure}
     \vspace{0.5em}
    \begin{subfigure}[b]{0.4\textwidth}
        \centering
        \includegraphics[width=\linewidth]{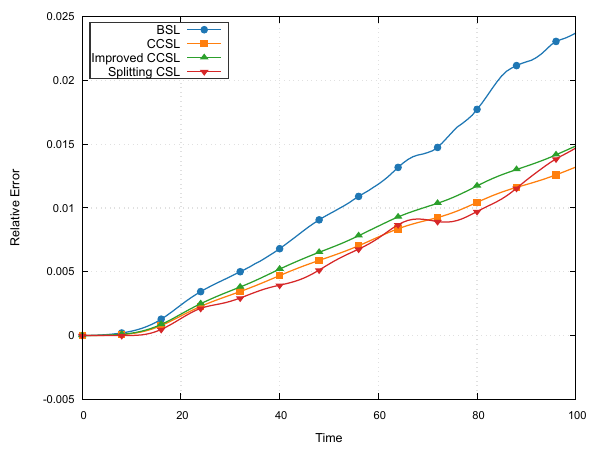}
        \caption{Total energy.}
        \label{Fig_energy_error}
    \end{subfigure}

    \caption{Diocotron test: time evolution of mass, $L^1$-norm, $L^2$-norm, and total energy conservation 
    obtained by four different methods: CCSL, the improved CCSL, the splitting CSL, and BSL.}
    \label{Fig_diocotron curve}
\end{figure}

Fig. \ref{Fig_density_diocotron} displays the density distributions obtained by the improved CCSL and BSL methods at different time spots. Both methods capture filament structures effectively; however, since the improved CCSL scheme preserves the maximum principle, it provides overall better results.

\begin{figure}[htbp]
  \centering
  \setlength{\tabcolsep}{3pt} 
  \renewcommand{\arraystretch}{0}

  \begin{tabular}{cc}
    \includegraphics[width=0.35\textwidth]{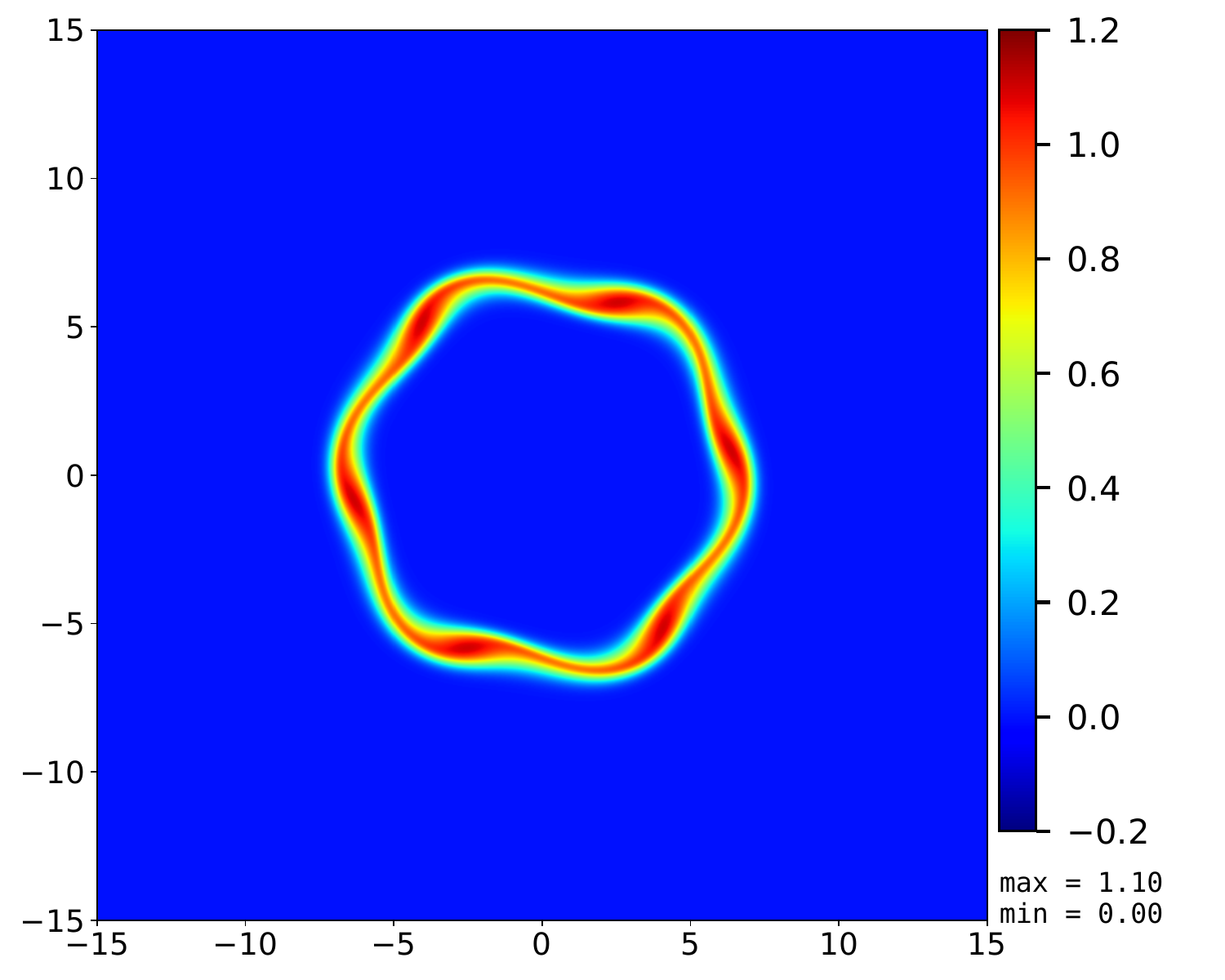} &
    \includegraphics[width=0.35\textwidth]{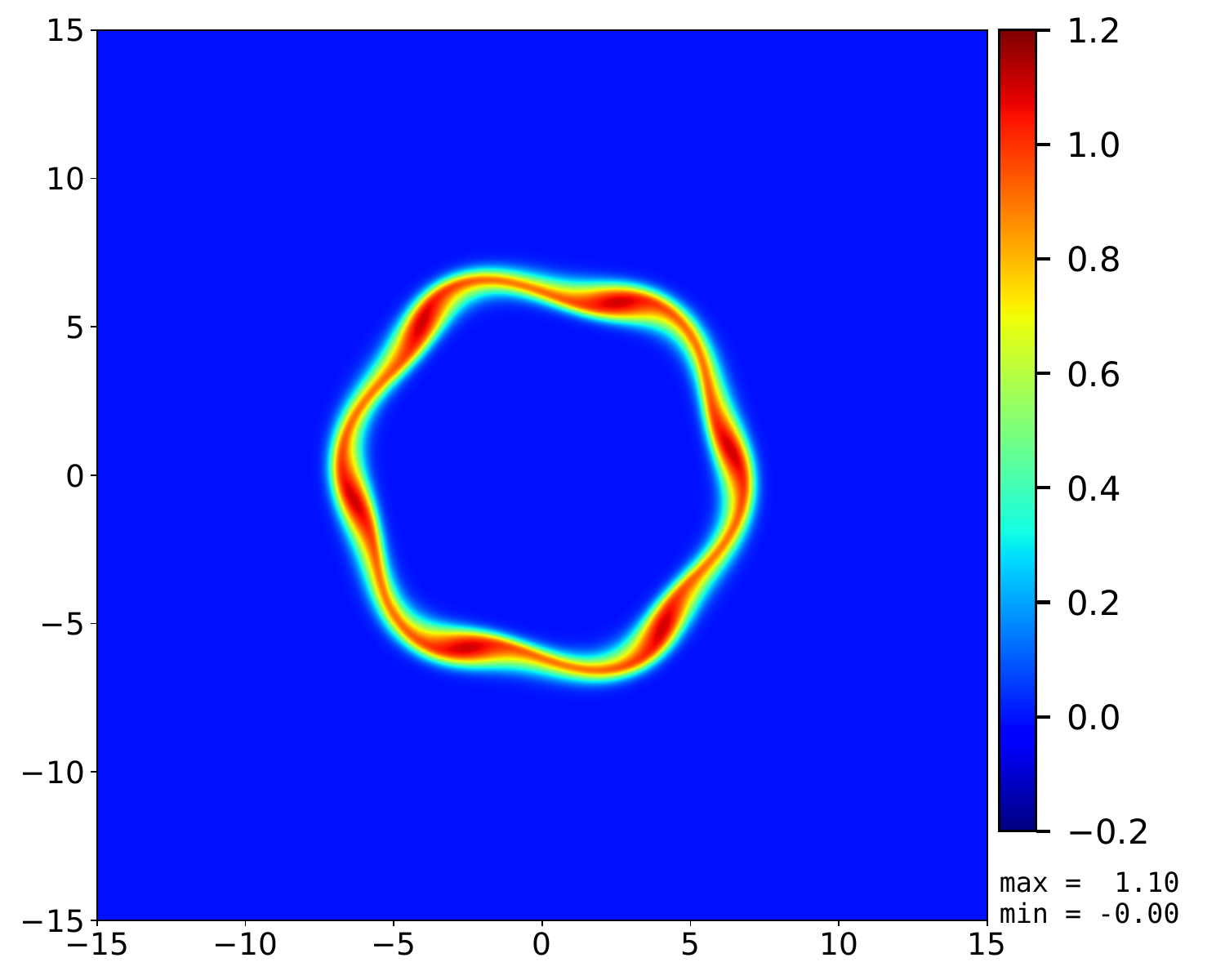} \\
    \includegraphics[width=0.35\textwidth]{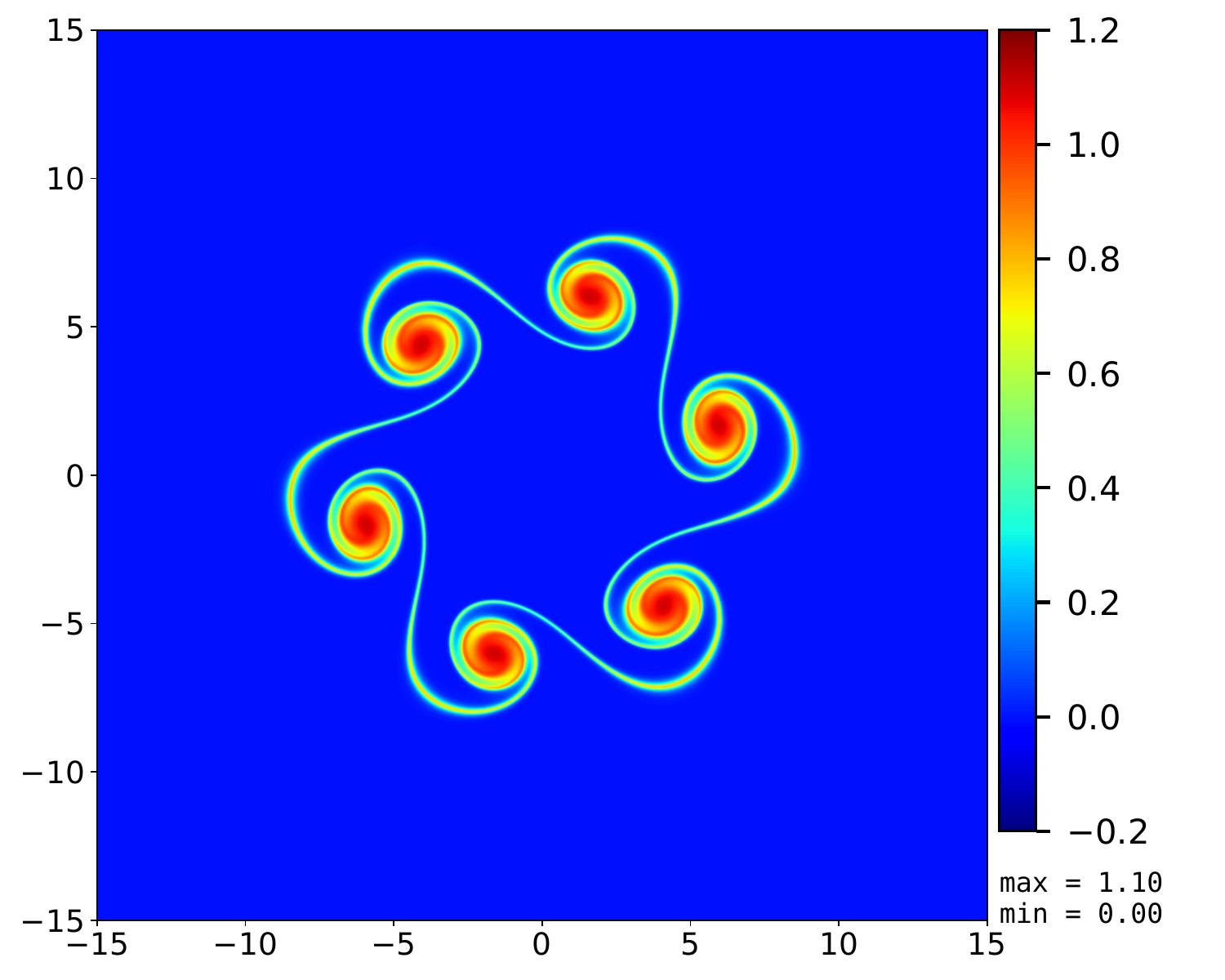} &
    \includegraphics[width=0.35\textwidth]{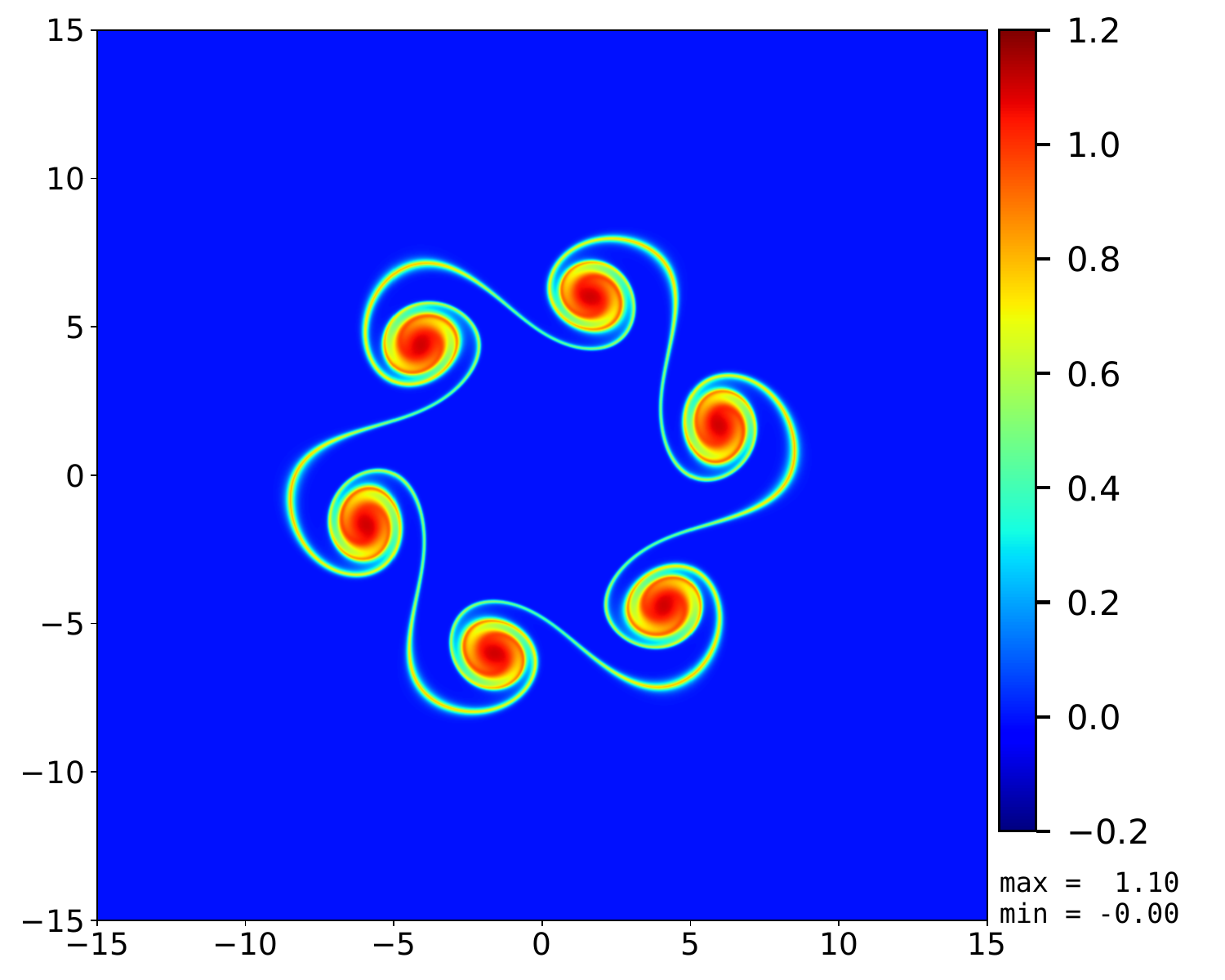} \\
    \includegraphics[width=0.35\textwidth]{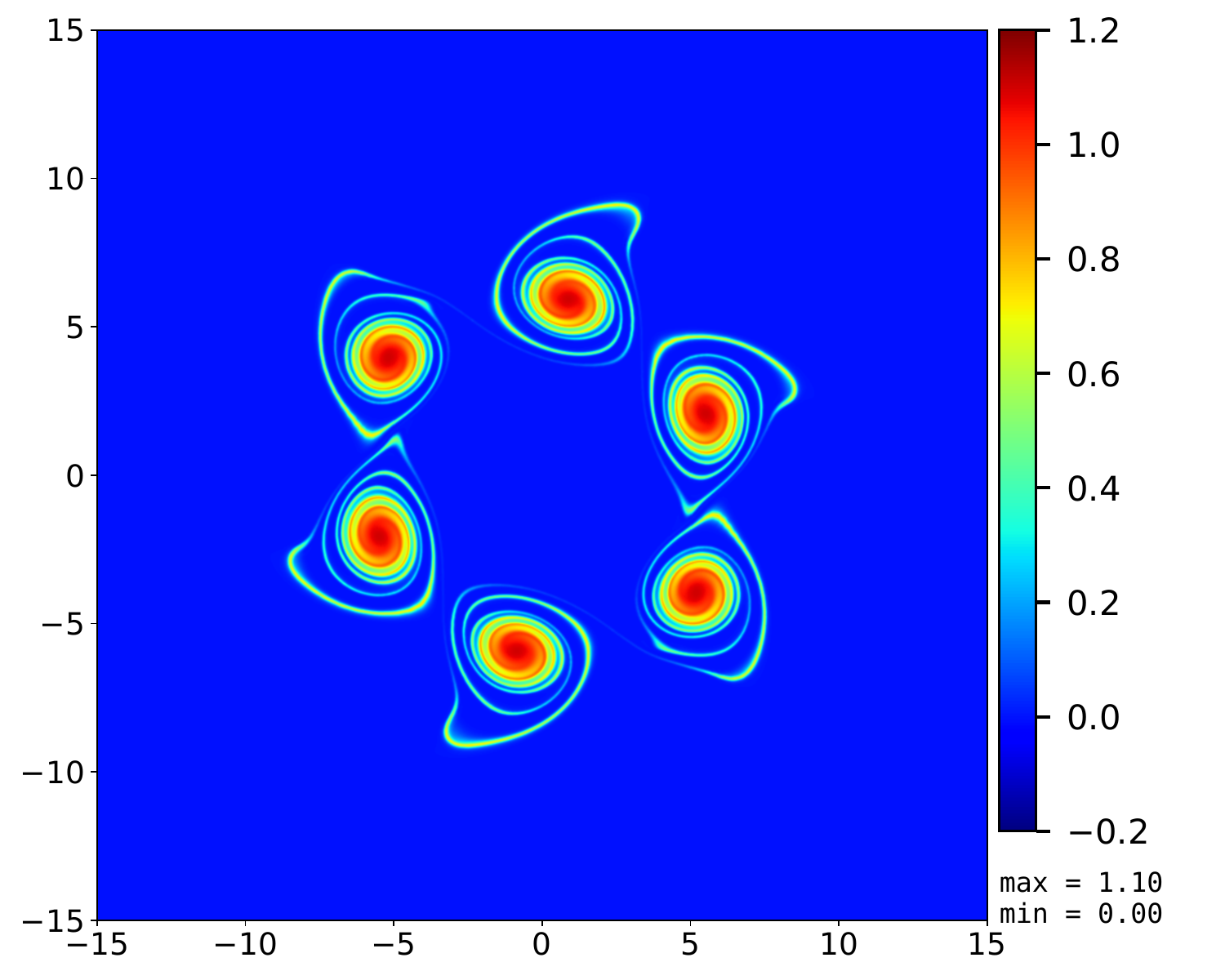} &
    \includegraphics[width=0.35\textwidth]{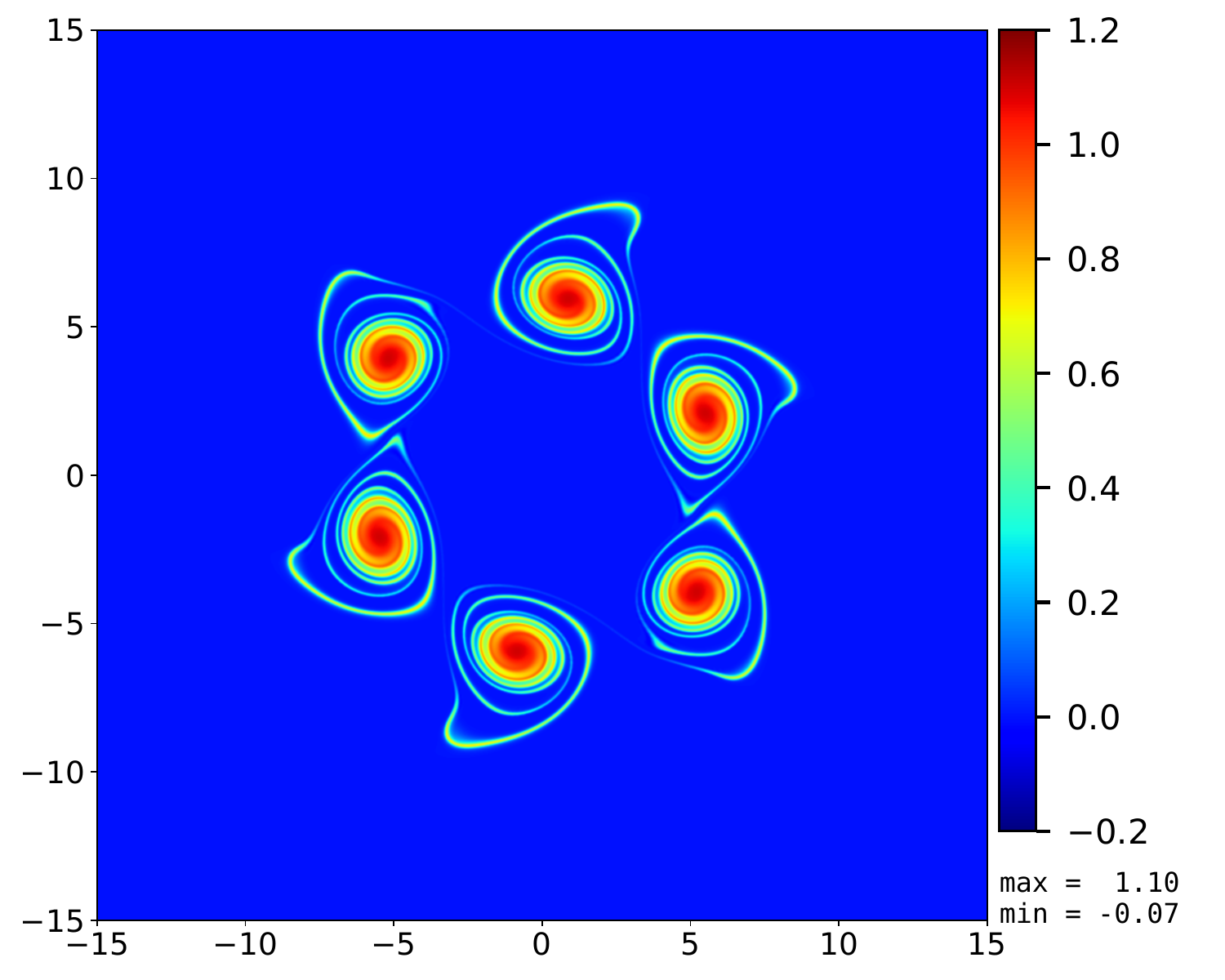} \\
    \includegraphics[width=0.35\textwidth]{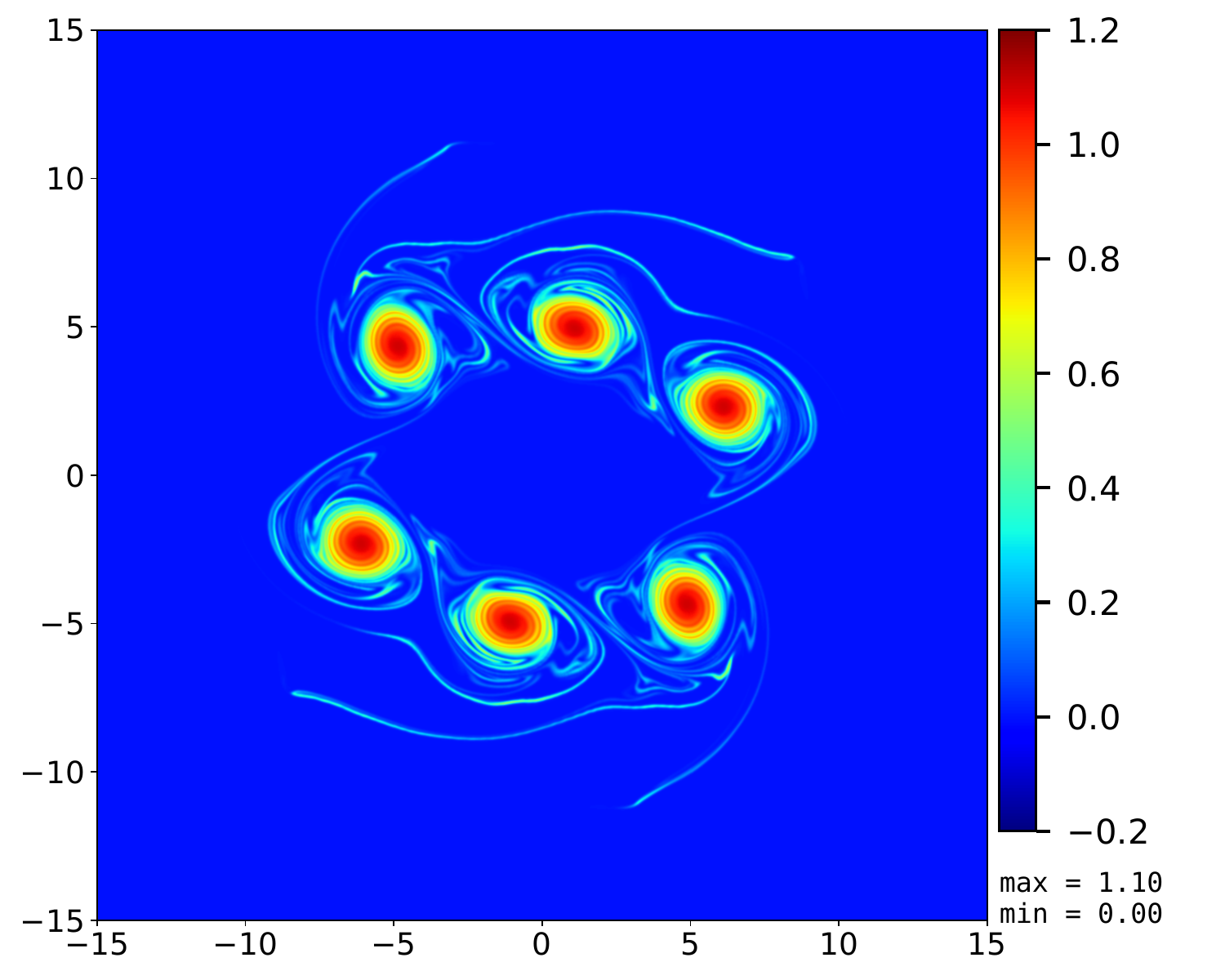} &
    \includegraphics[width=0.35\textwidth]{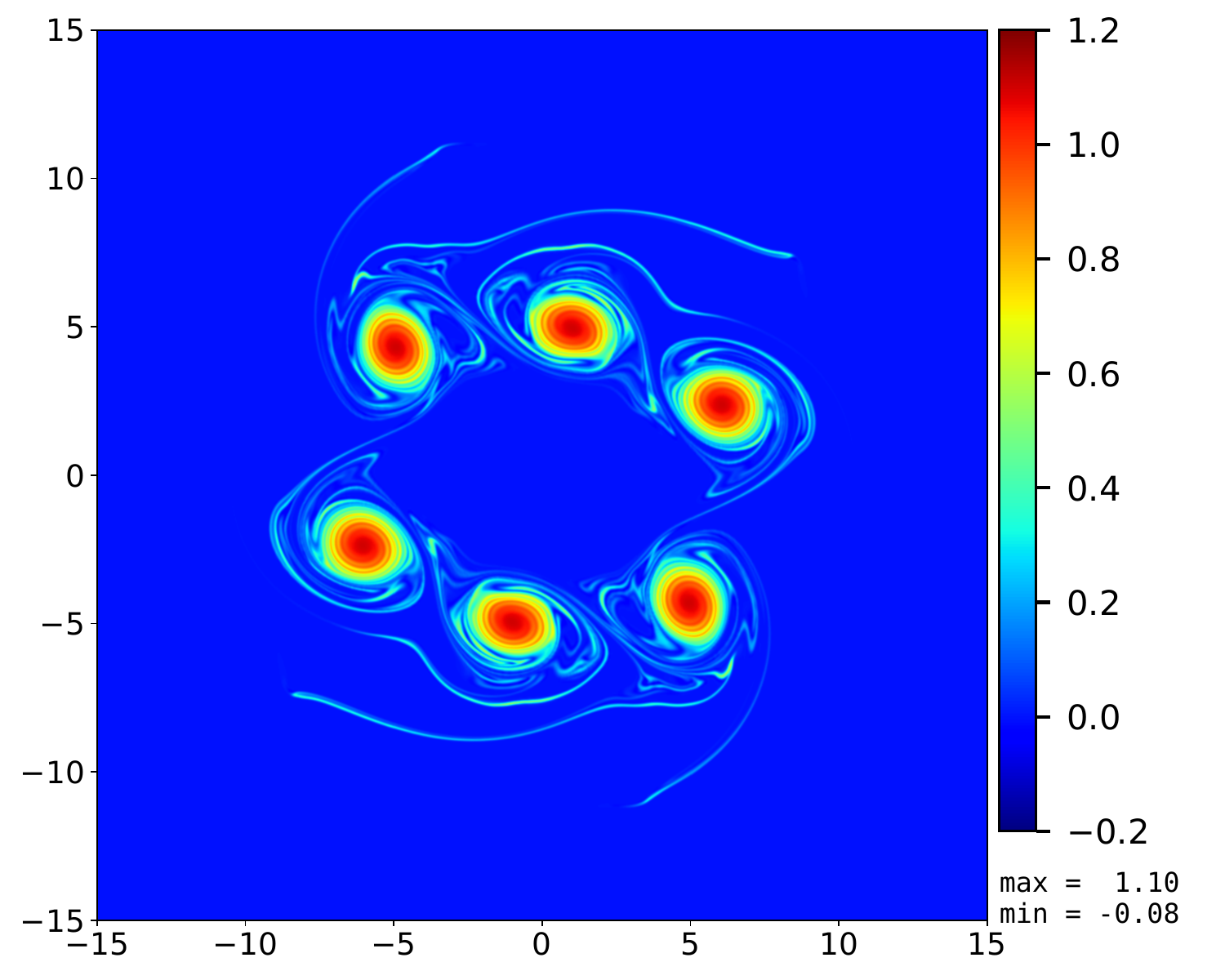} \\
  \end{tabular}

  \caption{Diocotron test: density distributions obtained by the improved CCSL (left column) 
  and BSL (right column) methods at different time spots
  ($t = 10, 30, 50, 100$) with $N_x = N_y = 1024$.}
  \label{Fig_density_diocotron}
\end{figure}

Although the original CCSL method guarantees mass conservation, it fails to preserve the volume of backtracked cells even when the velocity field is divergence free. To illustrate the importance of freestream preservation, we design a modified test based on the guiding center model. 

In this test, the formulation of the guiding center model remains unchanged except for the Poisson equation, which is modified as follows:
\begin{equation}
    \label{Eq. ITG test}
    -\Delta \phi = kf + \varepsilon\exp\left[-\frac{(x/l_x-0.5)^2+(y/l_y-0.5)^2}{2\sigma^2}\right],
\end{equation}
where $k$, $\varepsilon$, and $\sigma$ are constants, and $l_x$ and $l_y$ denote the domain lengths in the $x$- and $y$-directions, respectively. 
To emphasize the influence of the perturbation, we set $k = 10$, $\varepsilon = 0.8$, and $\sigma = 0.1$ in this test.

The computational domain is $[0, 16] \times [0, 8]$, discretized by a uniform grid of $128 \times 64$ cells. Periodic boundary conditions are applied in both directions. The initial condition is defined as $f(x,y,0) = 1$, corresponding to a uniform distribution.

The theoretical solution of this problem remains constant in time when the velocity field is divergence free. However, the original CCSL method fails to maintain this constant solution, exhibiting a noticeable drift from the uniform state $f=1$ as time evolves. This deviation originates from the loss of freestream preservation in the backtracked cells. In contrast, the improved CCSL scheme—with its freestream-preserving correction—maintains the constant solution exactly, as expected for a divergence-free flow field. 

Fig. \ref{Fig_ITG_CCSL_compare} illustrates the numerical results obtained with the original CCSL method. As time increases, the solution progressively departs from the uniform state and eventually becomes unstable, highlighting the necessity of enforcing freestream preservation in the numerical scheme.
\begin{figure}[htbp]
  \centering
  \setlength{\tabcolsep}{3pt} 
  \renewcommand{\arraystretch}{0}

  \begin{tabular}{cc}
    \includegraphics[width=0.35\textwidth]{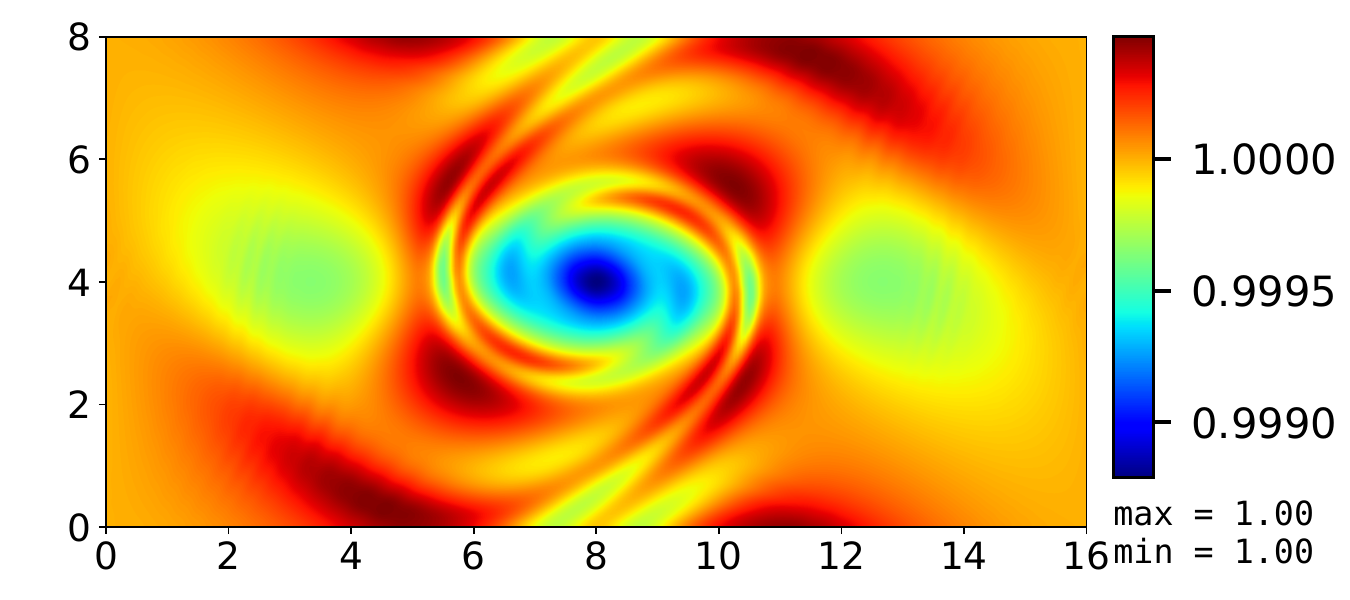} &
     \includegraphics[width=0.35\textwidth]{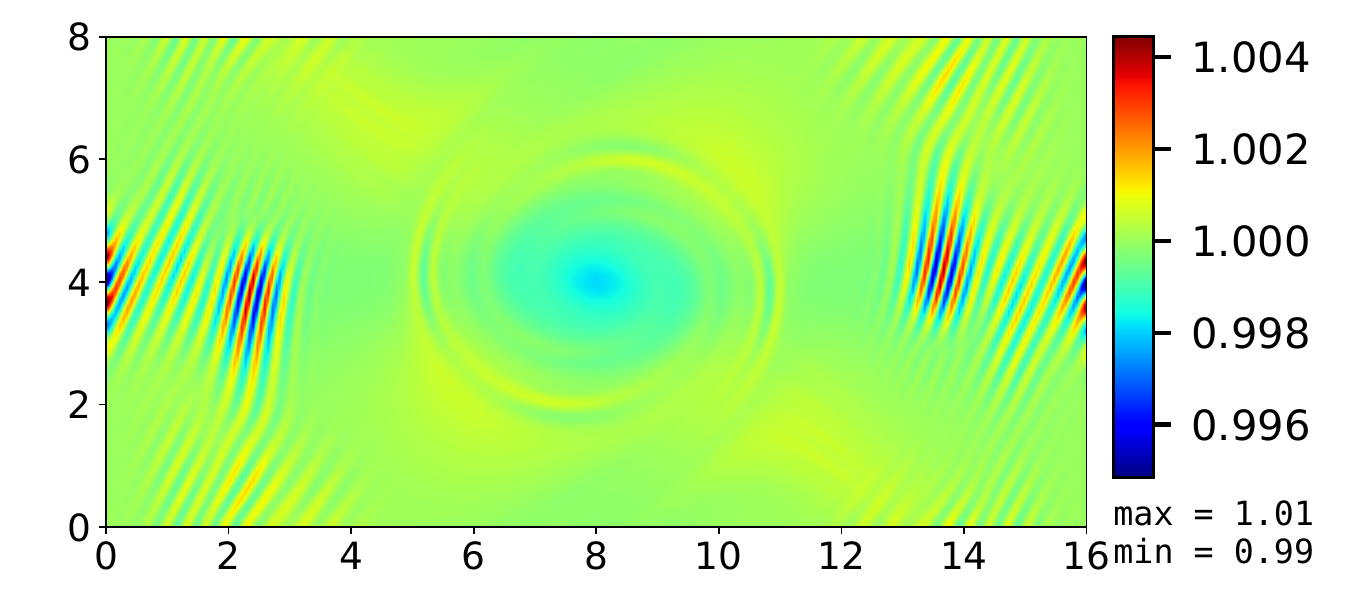}  \\
   \includegraphics[width=0.35\textwidth]{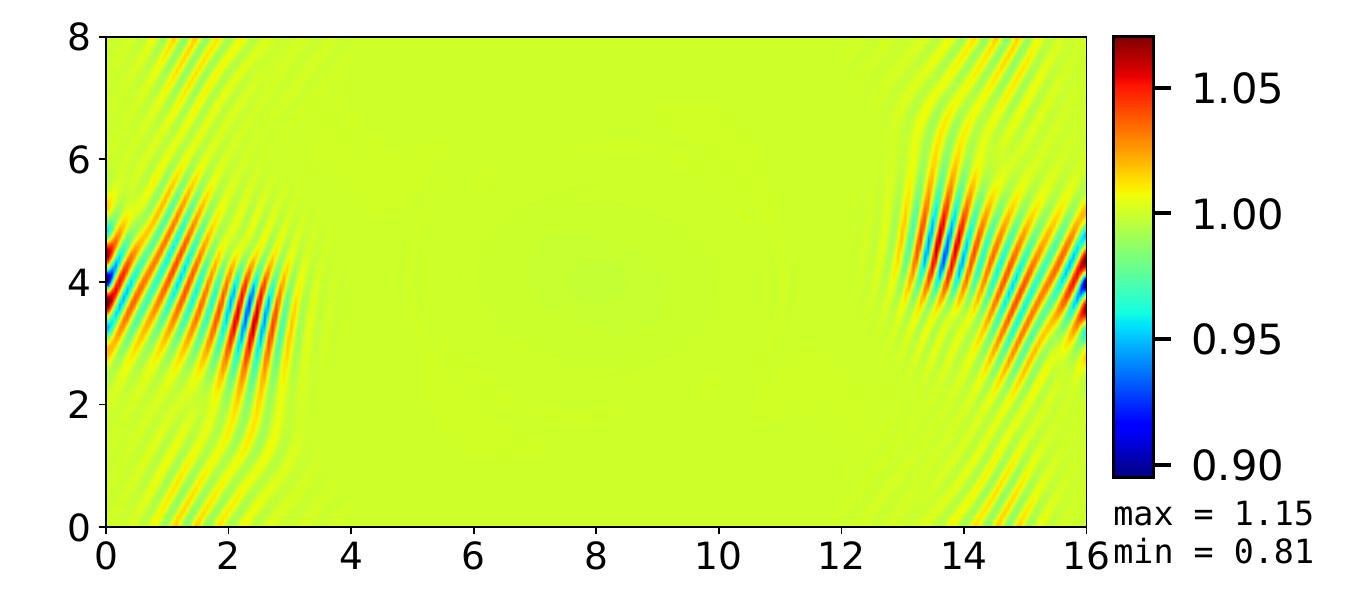} &
   \includegraphics[width=0.35\textwidth]{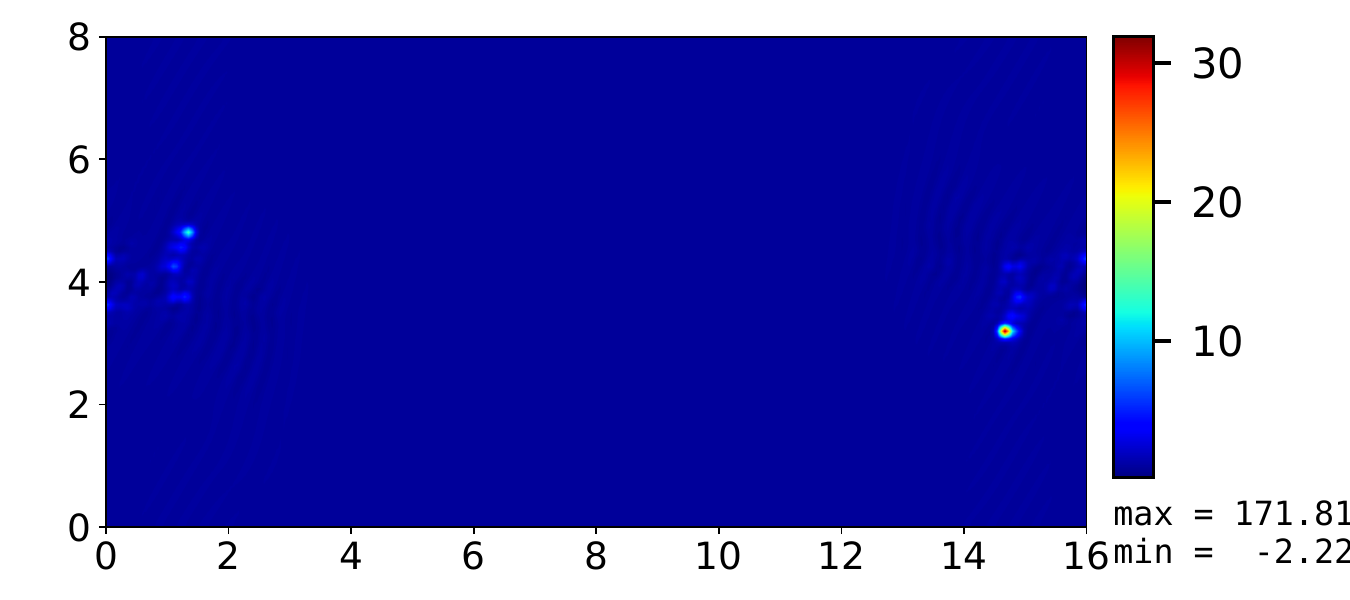}  \\
  \end{tabular}

  \caption{Failure of freestream preservation in the original CCSL method for the modified guiding-center model with a uniform initial condition $f=1$. Although the theoretical solution should remain constant under a divergence-free velocity field, the CCSL solution progressively departs from $f=1$ and develops spurious structures, as shown at $t=20,\;30,\;35,$ and $38$. The drift grows into a visible instability, revealing the loss of volume preservation in the backtracked cells. The improved CCSL method maintains the uniform state exactly and is therefore not displayed here.}
  \label{Fig_ITG_CCSL_compare}
\end{figure}

Having verified that the improved CCSL restores freestream preservation for a uniform state, we next consider a nontrivial initial condition to examine the long-time performance of different schemes. Five methods are compared: BSL, splitting BSL, CCSL, splitting CSL, and improved CCSL. Although the original CCSL and splitting CSL schemes eventually become unstable due to the lack of freestream preservation, their results before breakdown are still included for comparison. Under the modified Poisson equation, the guiding-center model remains mass- and norm-conservative, while the total energy is no longer strictly conserved because of the external source term. Since all values of $f$ are positive, we only compare the $L^1$- and $L^2$-norms conservation in the following analysis.

The computational domain is $[0,16] \times [0,8]$,
discretized by a uniform grid of $256\times128$ cells.
The initial condition is defined as
\begin{equation}
\label{Eq_ITG_init}
    \begin{cases}
        f(x,y,0) = \dfrac{1}{\sqrt{2\pi\,m_i\,T(x)}},\\[3pt]
        T(x) = 1 - \dfrac{L_x}{74\pi}\cos\!\left(\dfrac{2\pi x}{L_x}\right),
    \end{cases}
\end{equation}
which develops vortex structures during the evolution. 
The time step is $\Delta t = 1$ ($\text{CFL}\approx10.5$), and periodic boundary conditions are applied in both directions.

To illustrate the long-time behavior of the two schemes, Fig.~\ref{Fig_distribution_ITG} compares the spatial distributions obtained by the original CCSL and the improved CCSL at representative times 
$t=10,\,50,\,80,\,90,$ and $110$.  The two methods remain in close agreement at early times, but the CCSL solution gradually develops noticeable nonphysical oscillations as time progresses.  Although the CCSL still produces a seemingly reasonable distribution at $t=110$, the accumulated geometric inconsistency eventually leads to a breakdown shortly afterward (around $t\approx118$).  In contrast, the improved CCSL maintains a smooth and physically consistent solution throughout the entire simulation, benefiting from its freestream-preserving correction, which prevents the long-time degradation observed in CCSL.
\begin{figure}[htbp]
  \centering
  \setlength{\tabcolsep}{3pt} 
  \renewcommand{\arraystretch}{0}

  \begin{tabular}{cc}
    \includegraphics[width=0.35\textwidth]{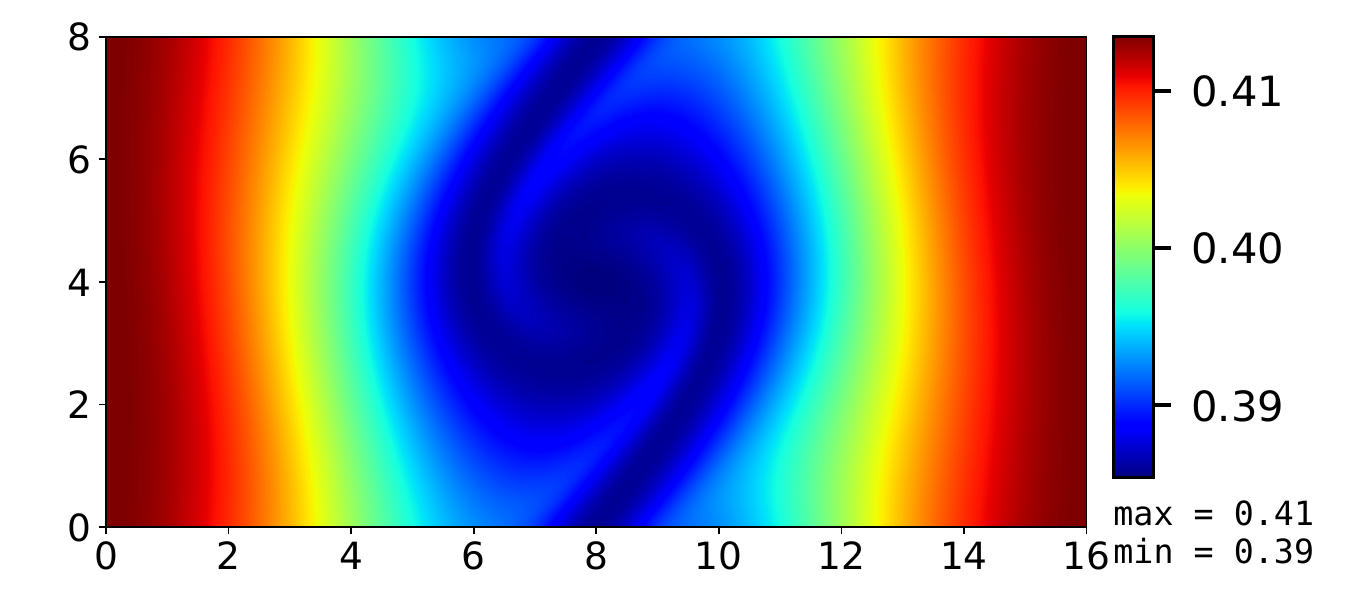} &
    \includegraphics[width=0.35\textwidth]{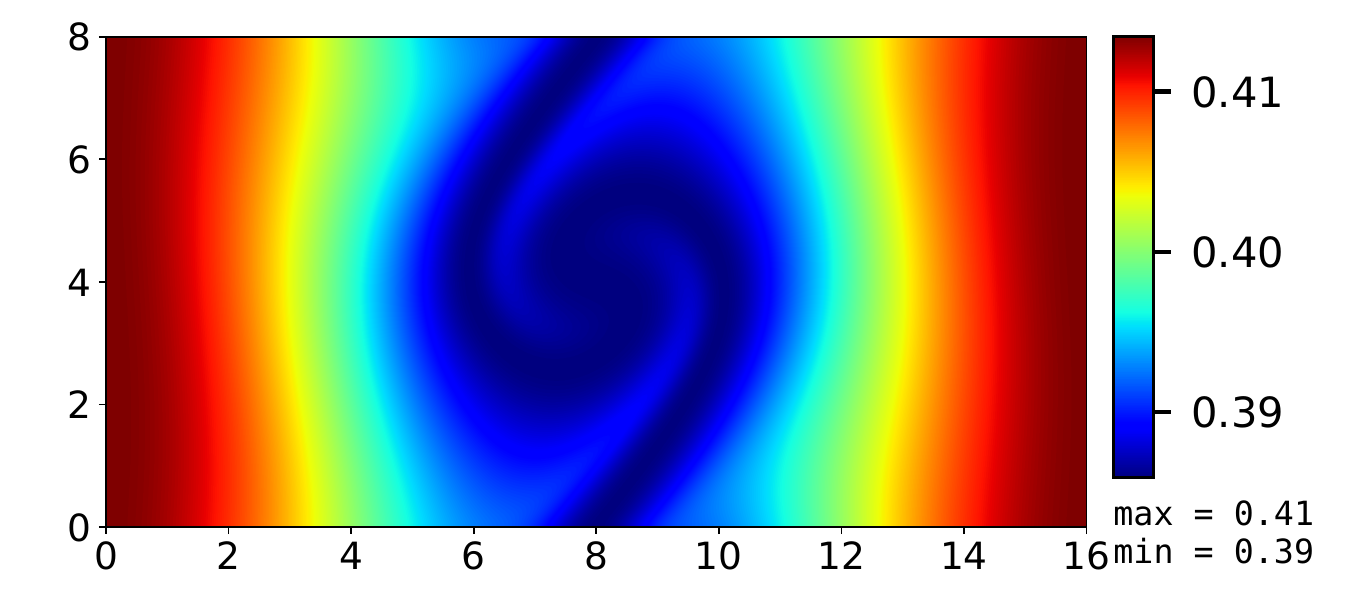} \\
    \includegraphics[width=0.35\textwidth]{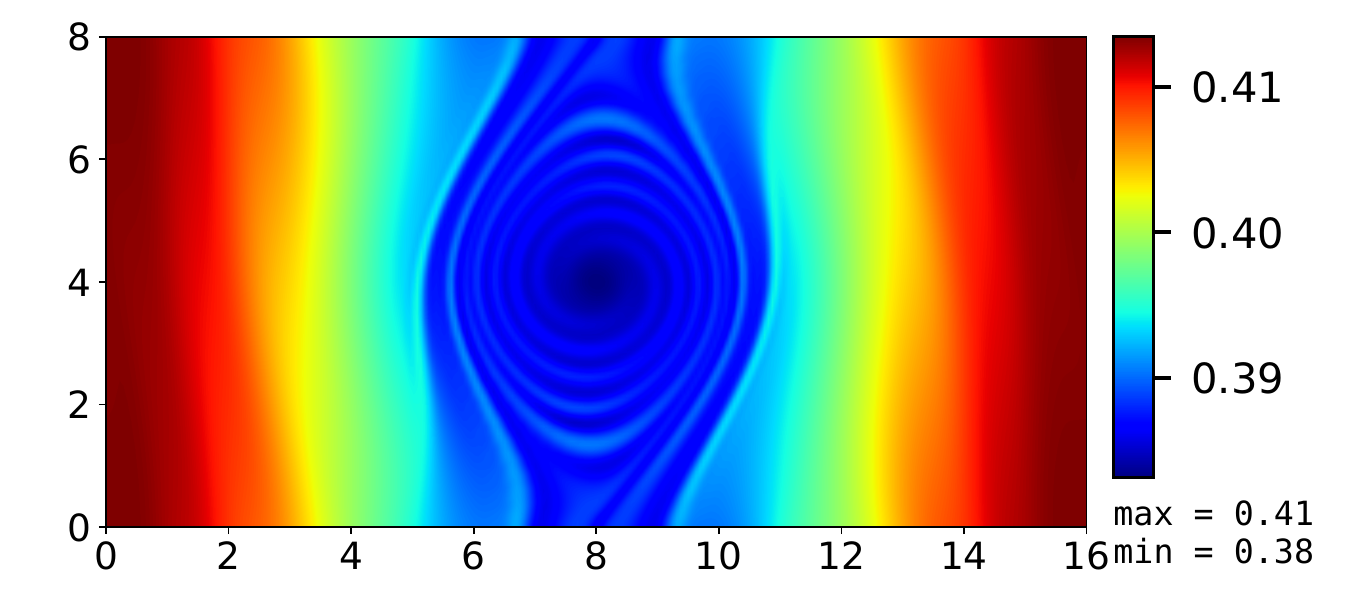} &
    \includegraphics[width=0.35\textwidth]{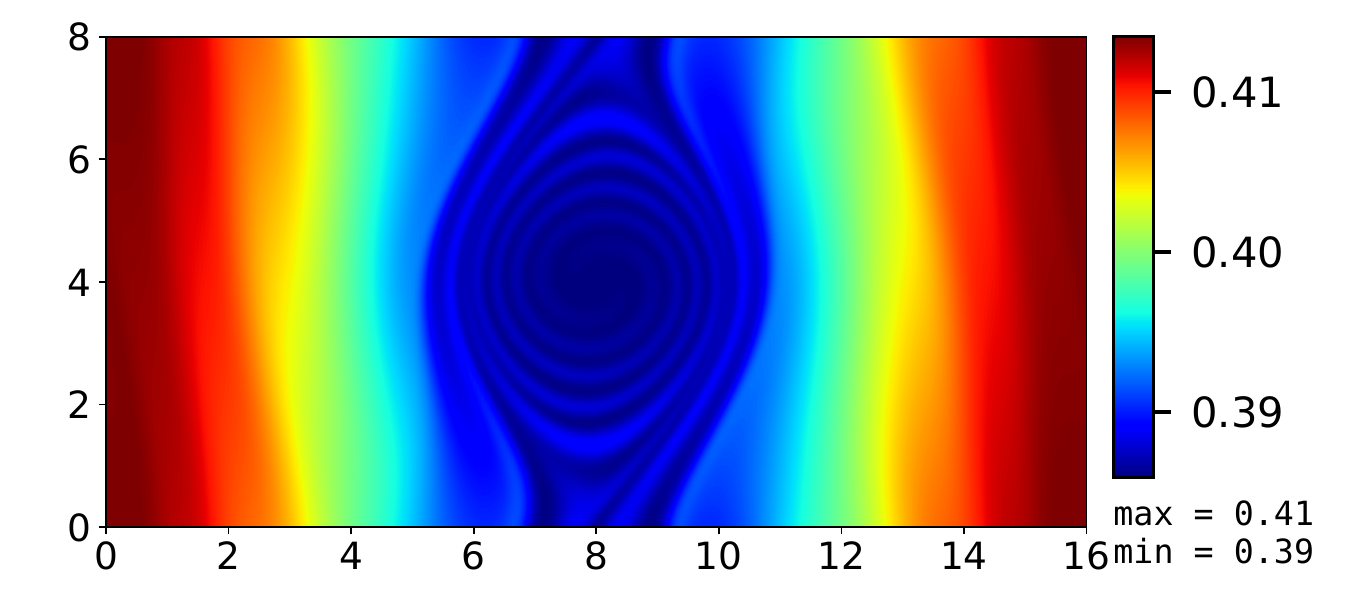} \\
    \includegraphics[width=0.35\textwidth]{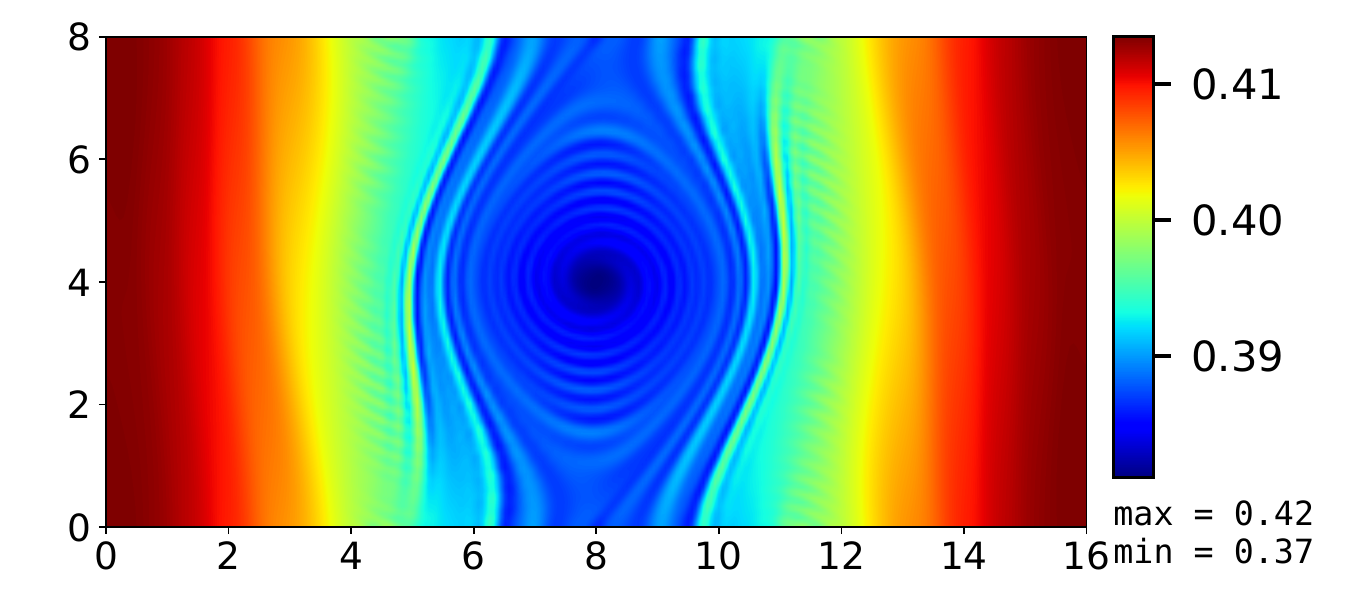} &
    \includegraphics[width=0.35\textwidth]{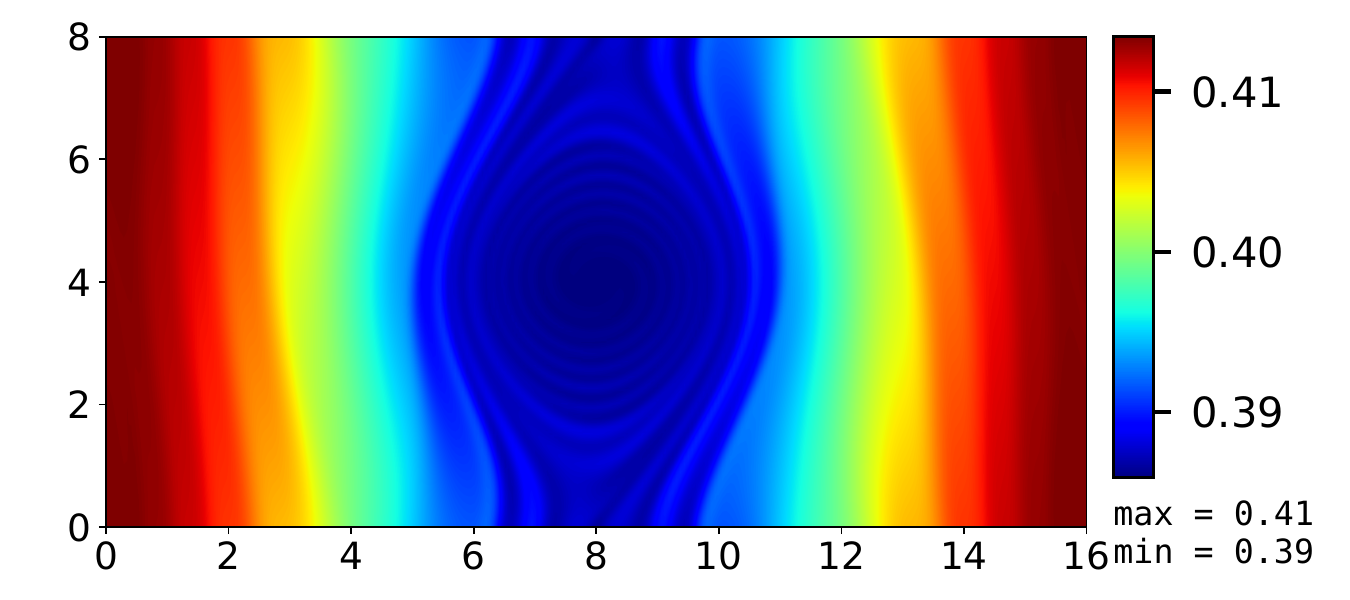} \\
    \includegraphics[width=0.35\textwidth]{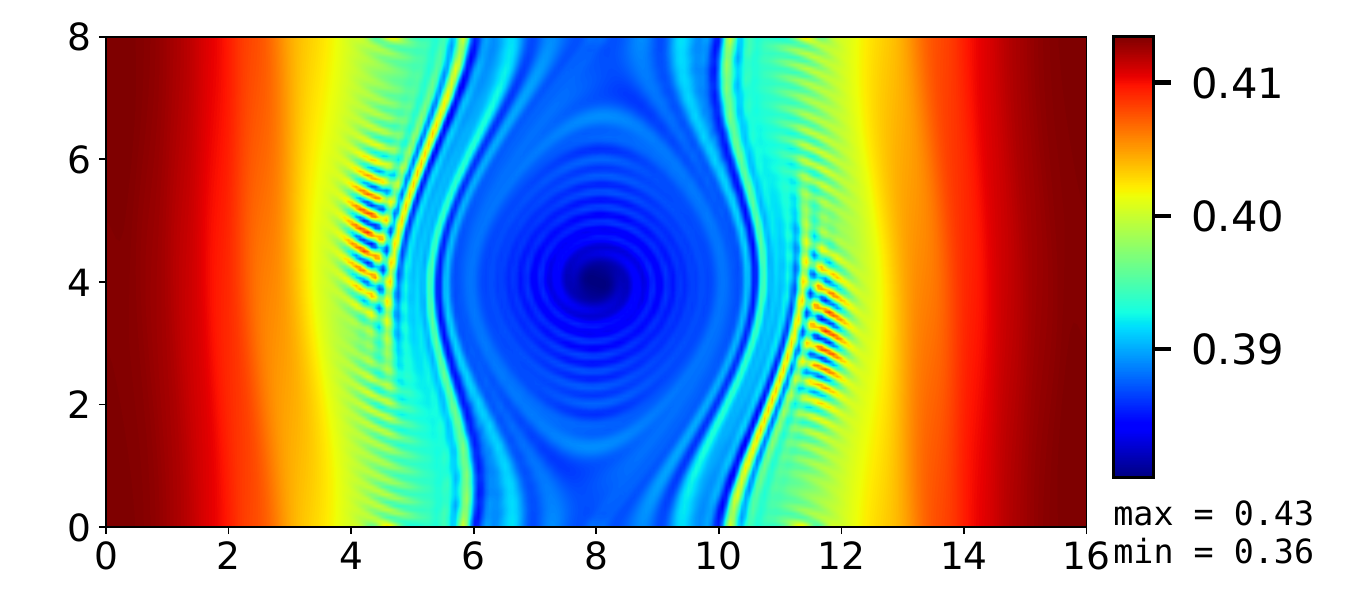} &
    \includegraphics[width=0.35\textwidth]{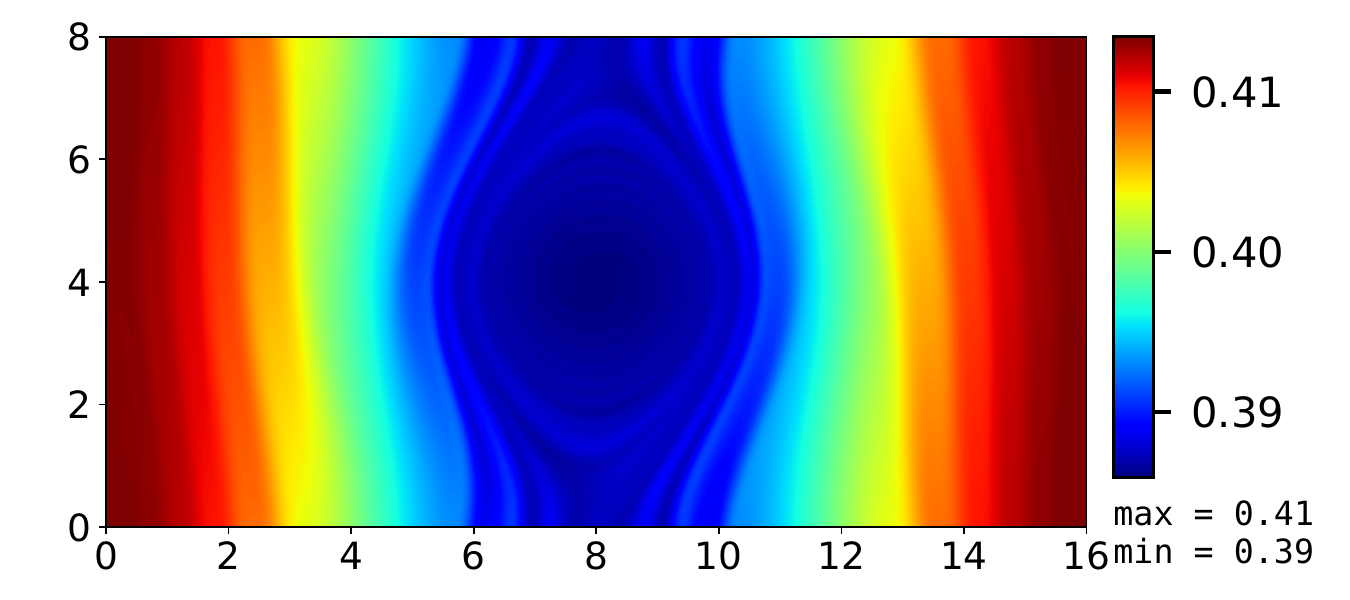} \\
    \includegraphics[width=0.35\textwidth]{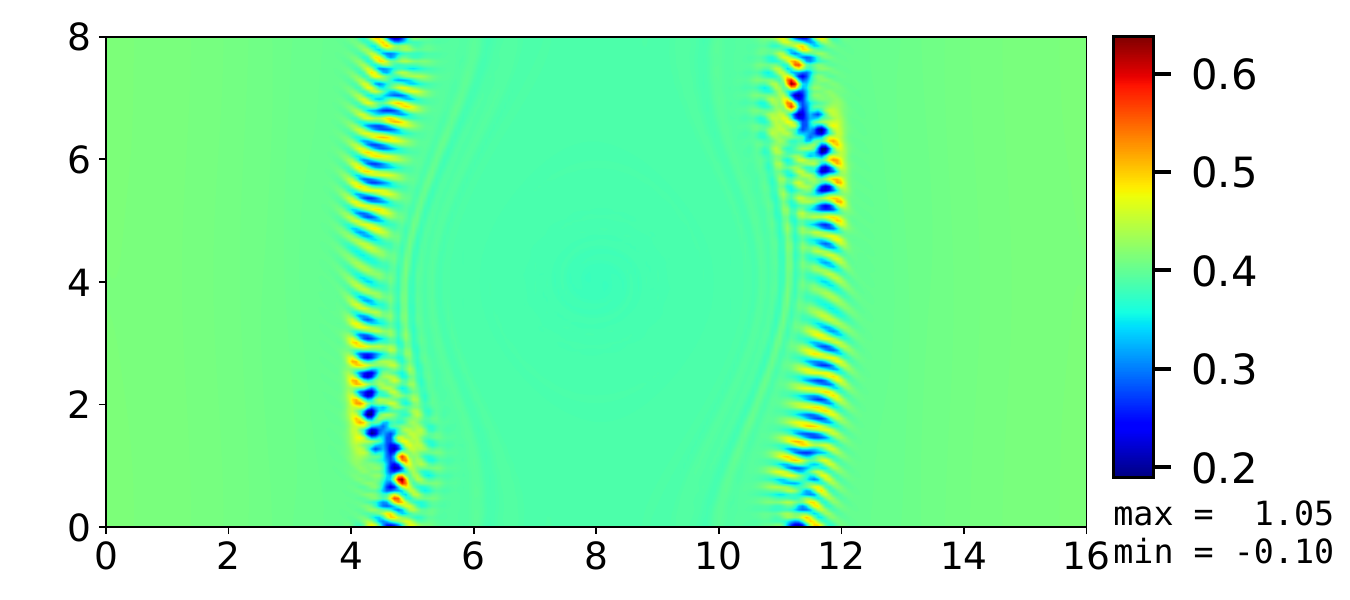} &
    \includegraphics[width=0.35\textwidth]{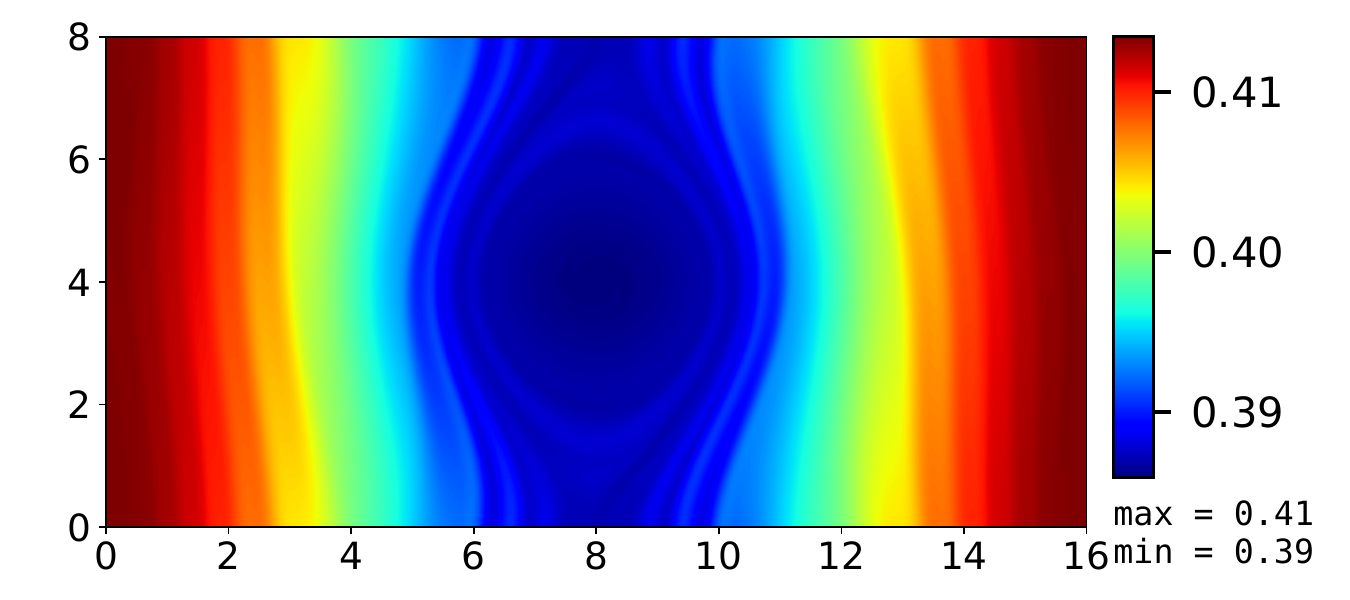} \\
  \end{tabular}

  \caption{Comparison of the spatial distributions of $f$ obtained by the CCSL (left column) and the improved CCSL (right column) for the modified Poisson equation with the initial  condition given in Eq. \eqref{Eq_ITG_init}, at times $t=10,\,50,\,80,\,90,$ and $110$ with $N_x = 256$ and $N_y = 128$.  The CCSL solution develops noticeable nonphysical oscillations as time evolves and breaks down shortly after $t=110$ (around $t\approx118$), whereas the improved CCSL remains stable and smooth throughout the simulation due to its freestream-preserving correction.}
   \label{Fig_distribution_ITG}
\end{figure}

Fig. \ref{Fig_curves_ITC} shows the time evolution of the $L^1$- and $L^2$-norm errors for the three schemes. The improved CCSL exhibits the smallest deviation in both $L^1$- and $L^2$-norms and maintains a remarkably stable error evolution, consistent with its strict freestream-preserving property and effective suppression of numerical oscillations. In comparison, the BSL and splitting BSL schemes show visibly larger fluctuations. For reference, the results of the original CCSL and splitting CSL schemes are also reported in Fig.~\ref{Fig_curves_ITC_ns}; both methods eventually lose stability and diverge because they do not preserve the phase-space volume.

\begin{figure}[htbp]
    \centering
    \begin{subfigure}[b]{0.4\textwidth}
        \centering
        \includegraphics[width=\linewidth]{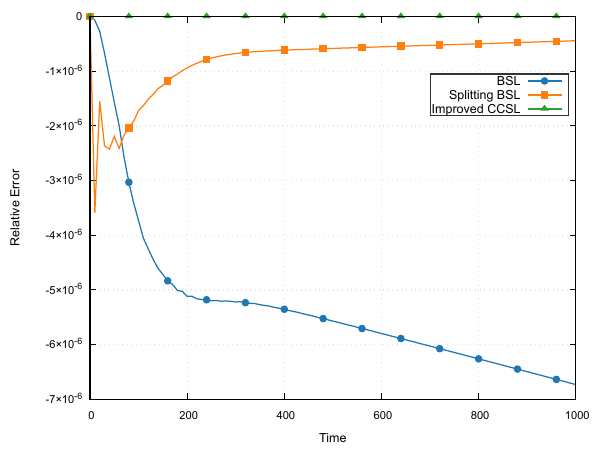}
        \caption{$L^1$-norm.}
        \label{Fig_l1norm_error_ITG}
    \end{subfigure}
    \vspace{0.5em}
    \begin{subfigure}[b]{0.4\textwidth}
        \centering
        \includegraphics[width=\linewidth]{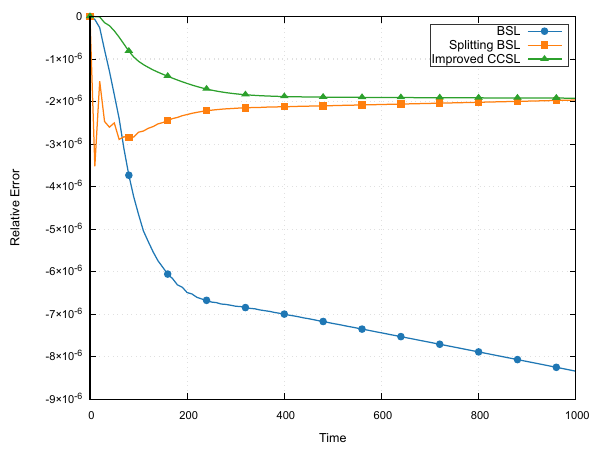}
        \caption{$L^2$-norm.}
        \label{Fig_l2norm_error_ITG}
    \end{subfigure}
    \caption{Time evolution of the $L^1$- and $L^2$-norms conservation obtained by three different methods—splitting BSL, BSL, and improved CCSL—for the guiding-center model with the modified Poisson equation and the initial condition defined in Eq.~\eqref{Eq_ITG_init}.}
    \label{Fig_curves_ITC}
\end{figure}  

\begin{figure}[htbp]
    \centering
    \begin{subfigure}[b]{0.4\textwidth}
        \centering
        \includegraphics[width=\linewidth]{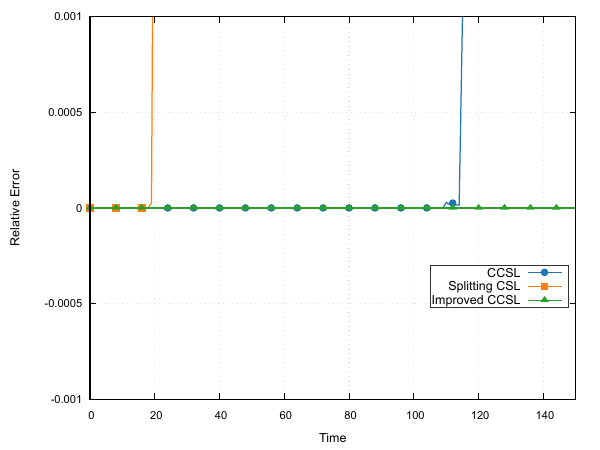}
        \caption{$L^1$-norm.}
        \label{Fig_l1norm_error_ITG1}
    \end{subfigure}
    \vspace{0.5em}
    \begin{subfigure}[b]{0.4\textwidth}
        \centering
        \includegraphics[width=\linewidth]{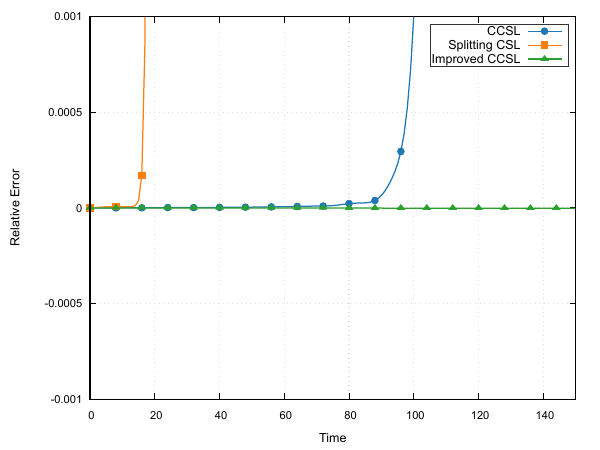}
        \caption{$L^2$-norm.}
        \label{Fig_l2norm_error_ITG1}
    \end{subfigure}
    \caption{Time evolution of the $L^1$- and $L^2$-norms conservation obtained with the splitting CSL, CCSL, and improved CCSL schemes for the guiding-center model with the modified Poisson equation and the initial condition given in Eq.~\eqref{Eq_ITG_init}. The CCSL and splitting CSL schemes lose stability and diverge at later times due to the lack of phase-space freestream preservation.}
    \label{Fig_curves_ITC_ns}
\end{figure}

\subsection{1D relativistic Vlasov–Maxwell model}

We next consider the 1D relativistic Vlasov–Maxwell model \cite{crouseilles2007}, which describes the interaction between a population of relativistic electrons and self-consistent electromagnetic fields. This model can be regarded as the relativistic extension of the classical 1D 
Vlasov–Maxwell system and is commonly used to study laser–plasma interactions in the relativistic regime. 

The governing equations of the 1D relativistic Vlasov--Maxwell system are written as
\begin{equation}
\label{Eq_RVM}
\frac{\partial f}{\partial t}+\frac{p}{m\gamma}\frac{\partial f}{\partial x}
+\left(eE_x - \frac{m c^2}{2\gamma}\frac{\partial a^2}{\partial x}\right)
\frac{\partial f}{\partial p} = 0,
\end{equation}
where $\gamma=\sqrt{1+p^2/(m^2 c^2) + a^2}$ is the relativistic factor, and $a(x,t)=eA(x,t)/mc$ is the
normalized amplitude of the potential vector $A=(0,A_y,A_z)$. The electromagnetic fields evolve according to
\begin{equation}
\label{Eq_Maxwell}
\begin{cases}
    \dfrac{\partial E_y}{\partial t}=-c^2\dfrac{\partial B_z}{\partial t}+\omega_p^2A_y\rho_{\gamma},\;\dfrac{\partial E_z}{\partial t}=c^2\dfrac{\partial B_y}{\partial t}+\omega_p^2A_z\rho_{\gamma},\\[6pt]
    \dfrac{\partial B_y}{\partial t}=\dfrac{\partial E_z}{\partial x},\dfrac{\partial B_z}{\partial t}=-\dfrac{\partial E_y}{\partial x},
\end{cases}
\end{equation}
where $\omega_p$ is the usual plasma frequency and $\rho_{\gamma}$ is defined as:
\begin{equation*}
    \rho_{\gamma}(x,t)=\int_{\mathbb{R}}\dfrac{f(x,p,t)}{m\gamma}\,dp.
\end{equation*}
The components of the vector potential are then obtained from
\begin{equation*}
    \dfrac{\partial A_y}{\partial t}=- E_y,\dfrac{\partial A_z}{\partial t}=-E_z.
\end{equation*}
In the 1D relativistic Vlasov-Maxwell system, the longitudinal electric field $E_x$ can be computed in two equivalent ways. First, $E_x$ can be obtained from the Poisson equation,
\begin{equation*}
\frac{\partial E_x}{\partial x} = \dfrac{e}{\epsilon_0}\left(\int_{\mathbb{R}}f(x,p,t)\,dp-n_i\right).
\end{equation*}
This formulation ensures that Gauss’s law is exactly satisfied at each time step. Alternatively, $E_x$ can be computed from the Ampère equation,
\begin{equation*}
\frac{\partial E_x}{\partial t} = -J,
\end{equation*}
where $J(x,t) = \int_{\mathbb{R}} (pf/m\gamma) f\,dp$ denotes the longitudinal current density.
This time-dependent formulation is particularly useful for schemes that directly evolve the electromagnetic fields.

The computational domain is $x \in [0,2\sqrt{2}\pi]$ and $v \in [-2.5,\, 2.5]$, discretized by a uniform grid of $N_x \times N_v = 256 \times 256$. The time step is set to $\Delta t = 0.01$. Periodic boundary conditions are imposed in the spatial direction, while zero boundary conditions are applied in velocity space.

The initial distribution function is a Maxwellian with a small sinusoidal perturbation in $x$:
\begin{equation*}
  f(x,v,0) \;=\; \frac{1}{\sqrt{2\pi}\,v_{\mathrm{th}}}\,
  \exp\!\Big(-\frac{v^{2}}{2v_{\mathrm{th}}^{2}}\Big)\,
  \Big[\,1+\varepsilon\cos\!\Big(\frac{2\pi}{L_x}\,(x+\Delta x)\Big)\Big],
\end{equation*}
where $v_{\mathrm{th}}=\sqrt{3/511}$ and $\varepsilon=0.1$ represent the thermal velocity 
and the density perturbation amplitude, respectively. 

The transverse components of the electric, magnetic, and vector potential fields are initialized as
\begin{equation*}
\begin{aligned}
  E_y(x,-\Delta t/2) &= Q\cos\!\big(k_0 x+\tfrac{\omega_0\,\Delta t}{2}\big), 
  &\quad E_z(x,-\Delta t/2) &= Q\sin\!\big(k_0 x+\tfrac{\omega_0\,\Delta t}{2}\big), \\[3pt]
  B_y(x,0) &= -\,\frac{k_s Q}{\omega_s}\sin(k_0x), 
  &\quad B_z(x,0) &= \frac{k_s Q}{\omega_s}\cos(k_0x), \\[3pt]
  A_y(x,0) &= -\,\frac{Q}{\omega_s}\sin(k_0 x), 
  &\quad A_z(x,0) &= \frac{Q}{\omega_s}\cos(k_0 x).
\end{aligned}
\end{equation*}
This initialization corresponds to a monochromatic electromagnetic wave of small amplitude propagating in a uniform plasma. The parameters are chosen as
\[
k_0=\frac{1}{\sqrt{2}}, \qquad
k_s=\frac{2}{\Delta x}\sin\!\Big(\frac{k_0\Delta x}{2}\Big), \qquad
P_{\mathrm{osc}}=\sqrt{3}, \qquad
\gamma_0=\sqrt{1+P_{\mathrm{osc}}^{2}},
\]
\[
\omega_s=\sqrt{\frac{1}{\gamma_0}+k_s^{2}}, \qquad
\omega_0=\frac{4}{\Delta t}\arcsin\!\Big(\frac{\Delta t\,\omega_s}{4}\Big), \qquad
Q=P_{\mathrm{osc}}\,\omega_s.
\]
These parameters satisfy the discrete dispersion relation and ensure consistency between the field and particle dynamics in the simulation.

The relativistic Vlasov–Maxwell model described above is used as a benchmark to assess the behavior of the BSL, CCSL, and improved CCSL schemes. As shown in Fig. \ref{Fig_f_RVM}, both the BSL and improved CCSL methods are able to capture the detailed deformation of the distribution function in phase space with high accuracy. At early times, the two solutions are almost indistinguishable; however, as the system evolves, the BSL result develops negative values near regions of strong gradients. In contrast, the improved CCSL, benefiting from the limiter, maintains a smooth and strictly positive distribution throughout the simulation, while preserving filament features and overall coherence of the phase-space structure.
\begin{figure}[htbp]
  \centering
  \setlength{\tabcolsep}{3pt}
  \renewcommand{\arraystretch}{0}

  \begin{tabular}{cc}
    \includegraphics[width=0.35\textwidth]{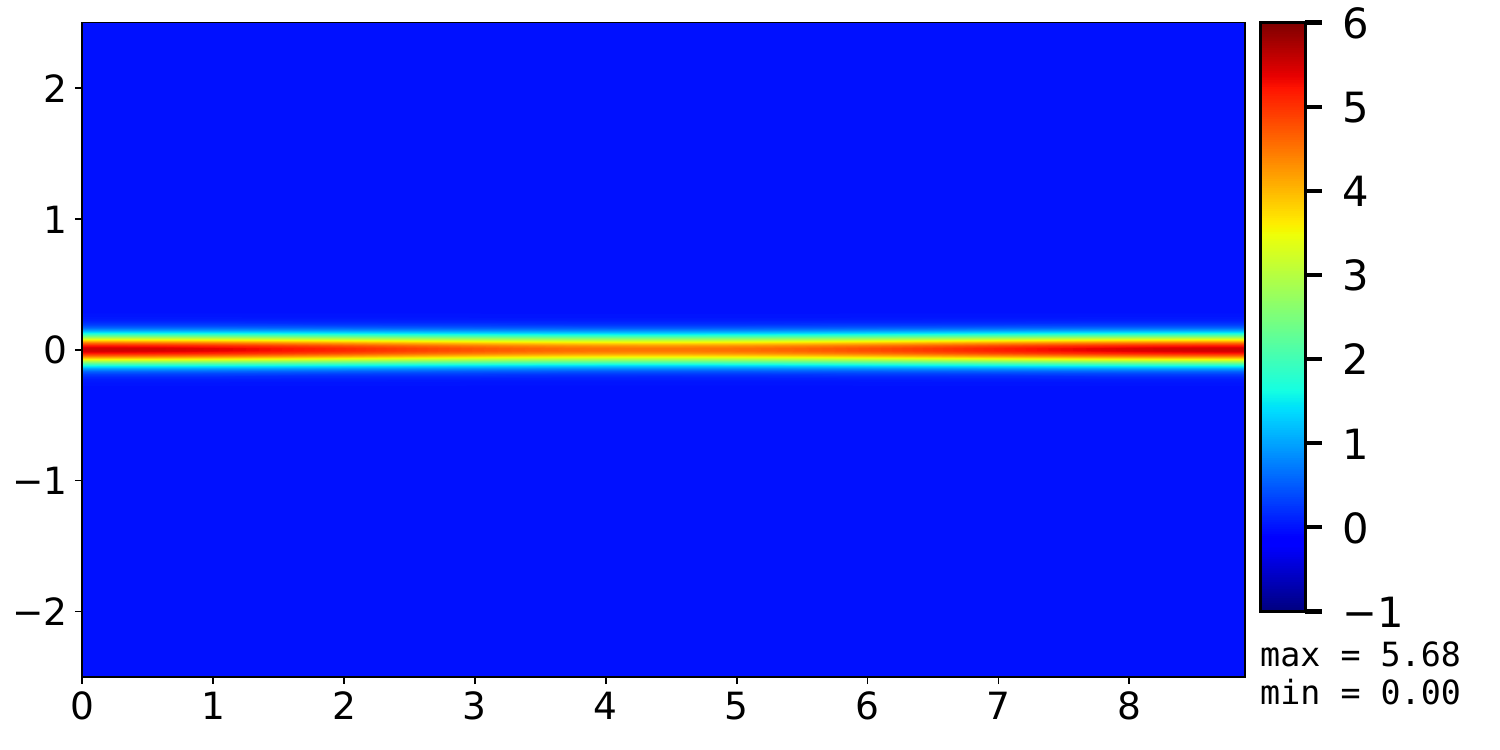} &
    \includegraphics[width=0.35\textwidth]{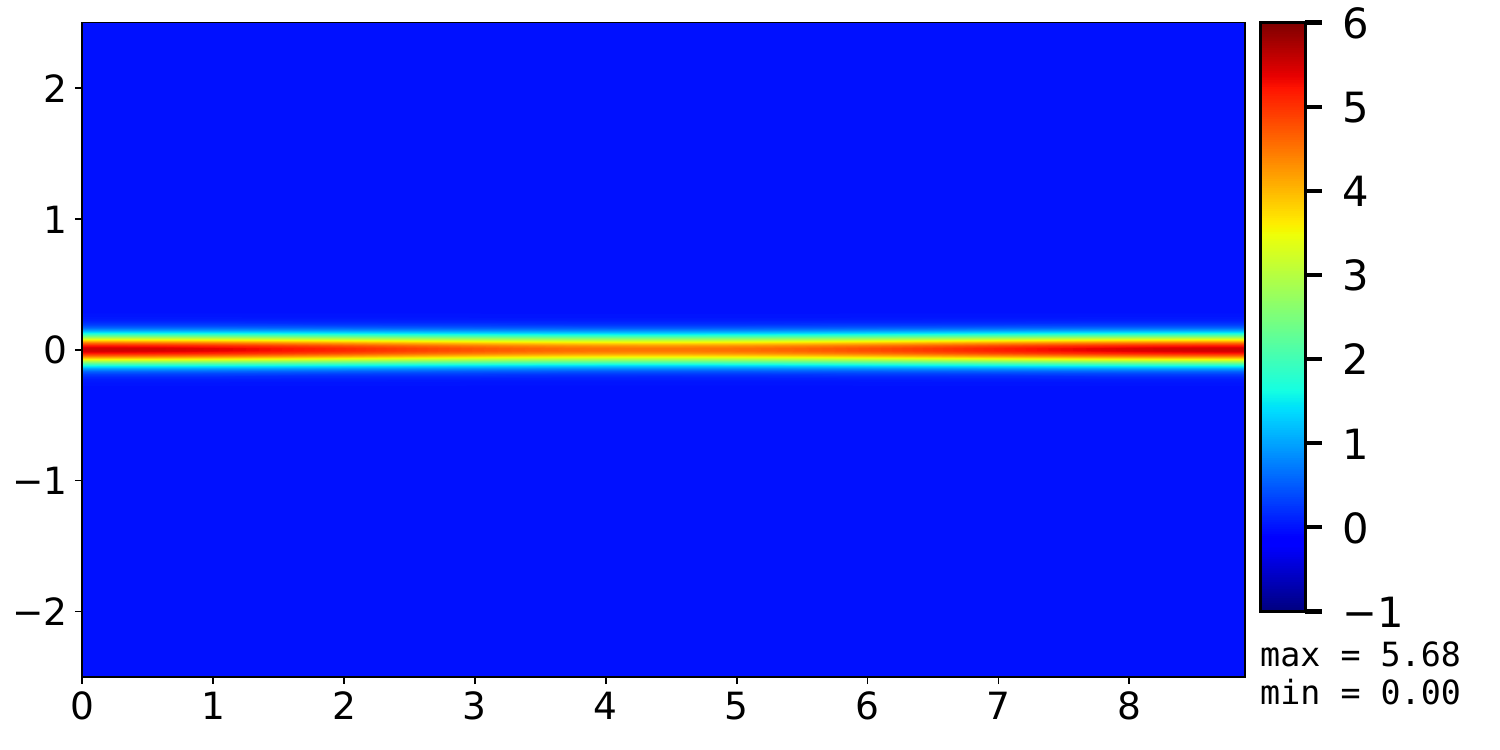} \\[-2pt]
    \includegraphics[width=0.35\textwidth]{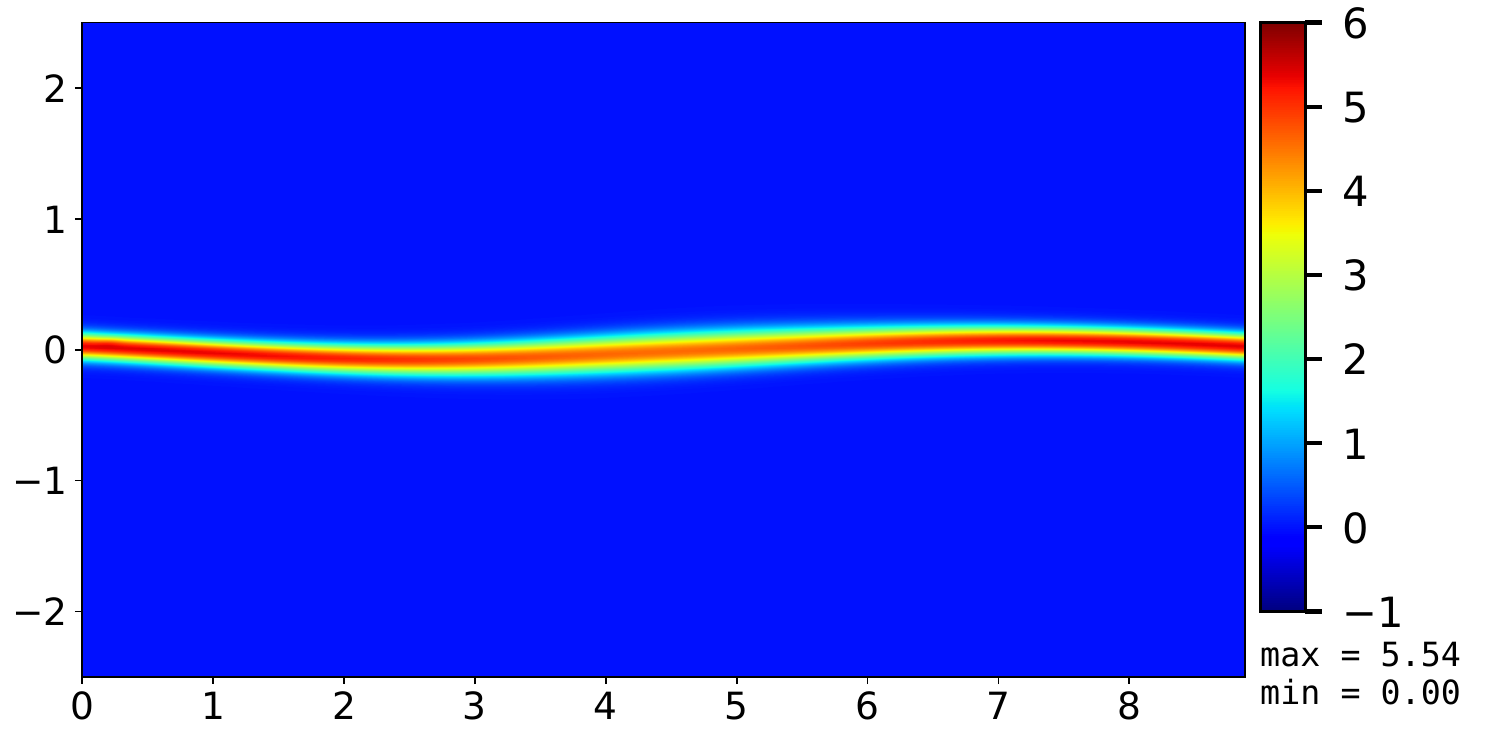} &
    \includegraphics[width=0.35\textwidth]{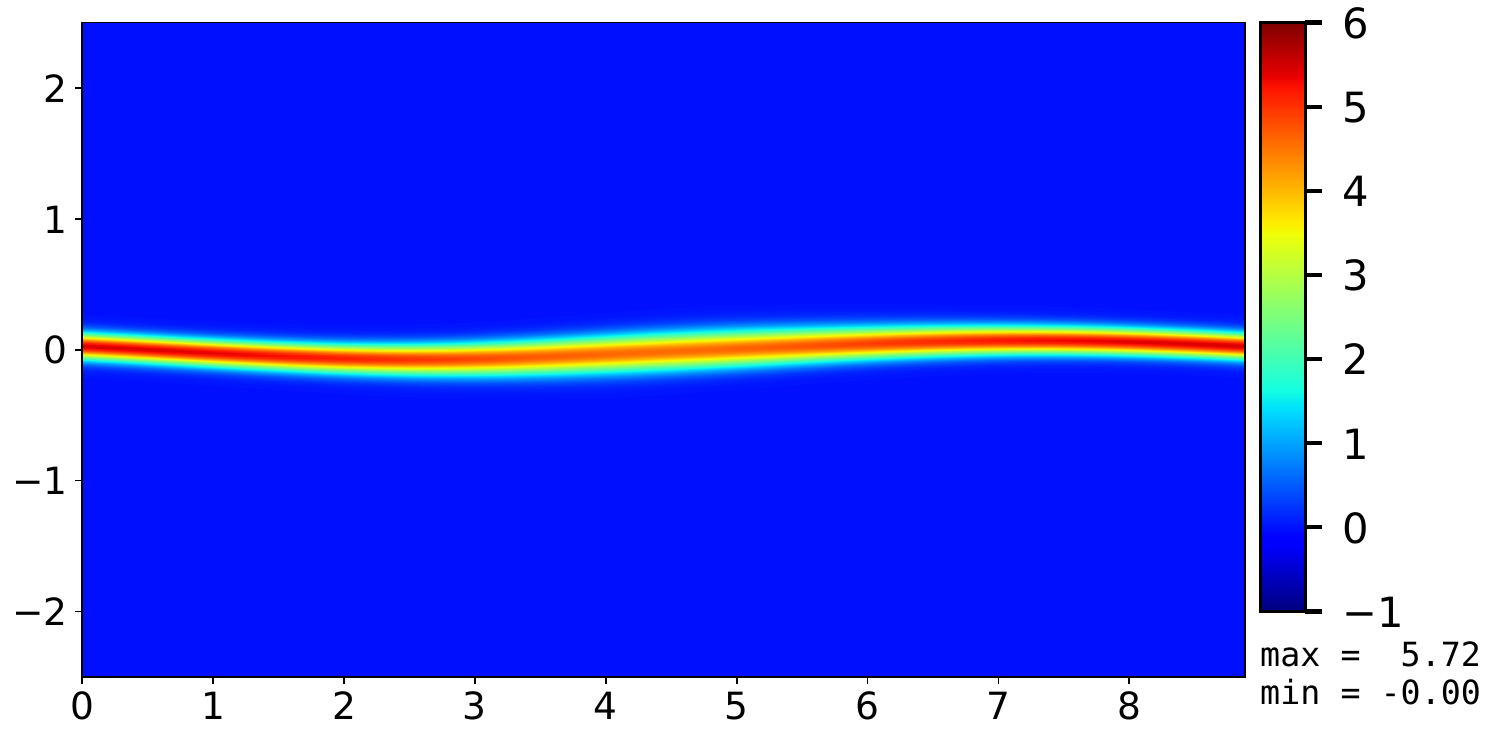} \\[-2pt]
    \includegraphics[width=0.35\textwidth]{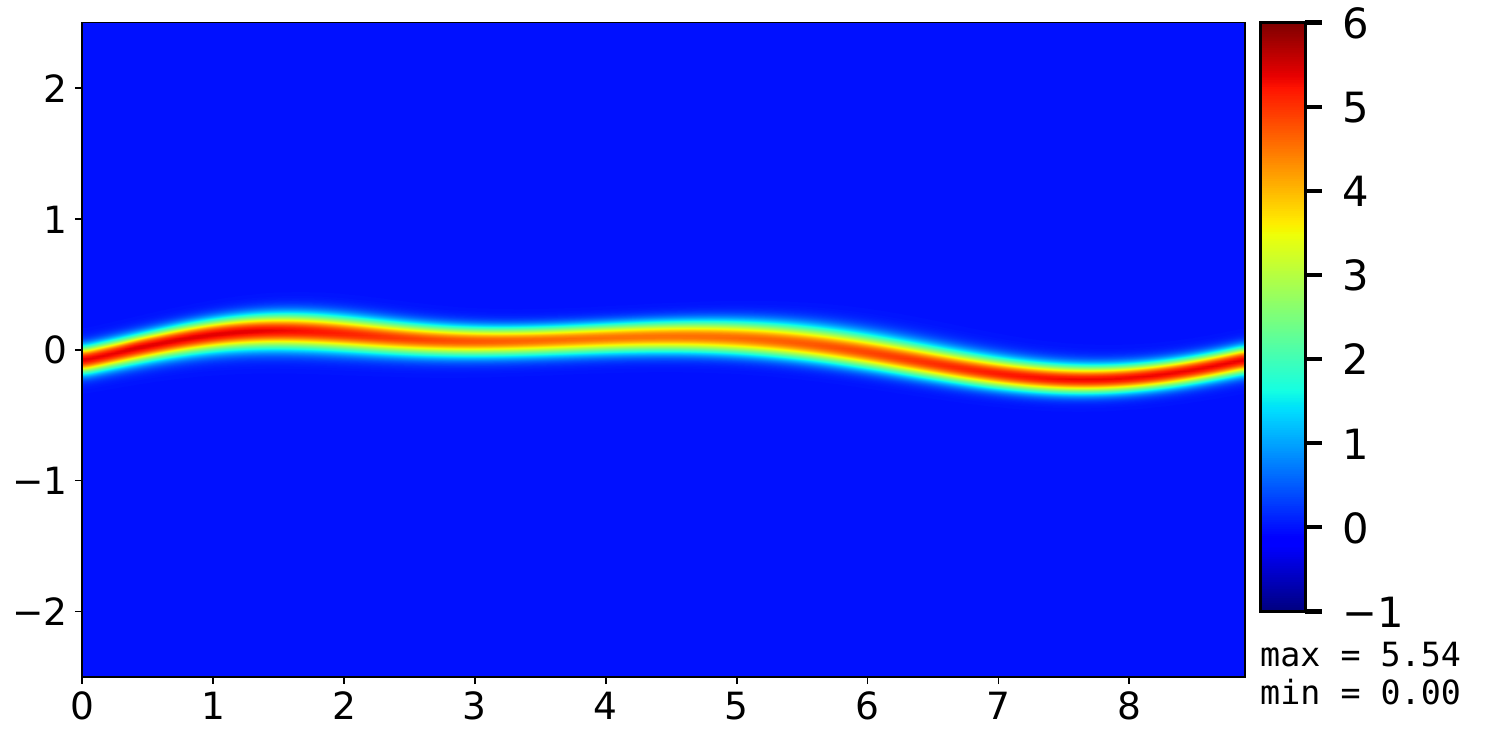} &
    \includegraphics[width=0.35\textwidth]{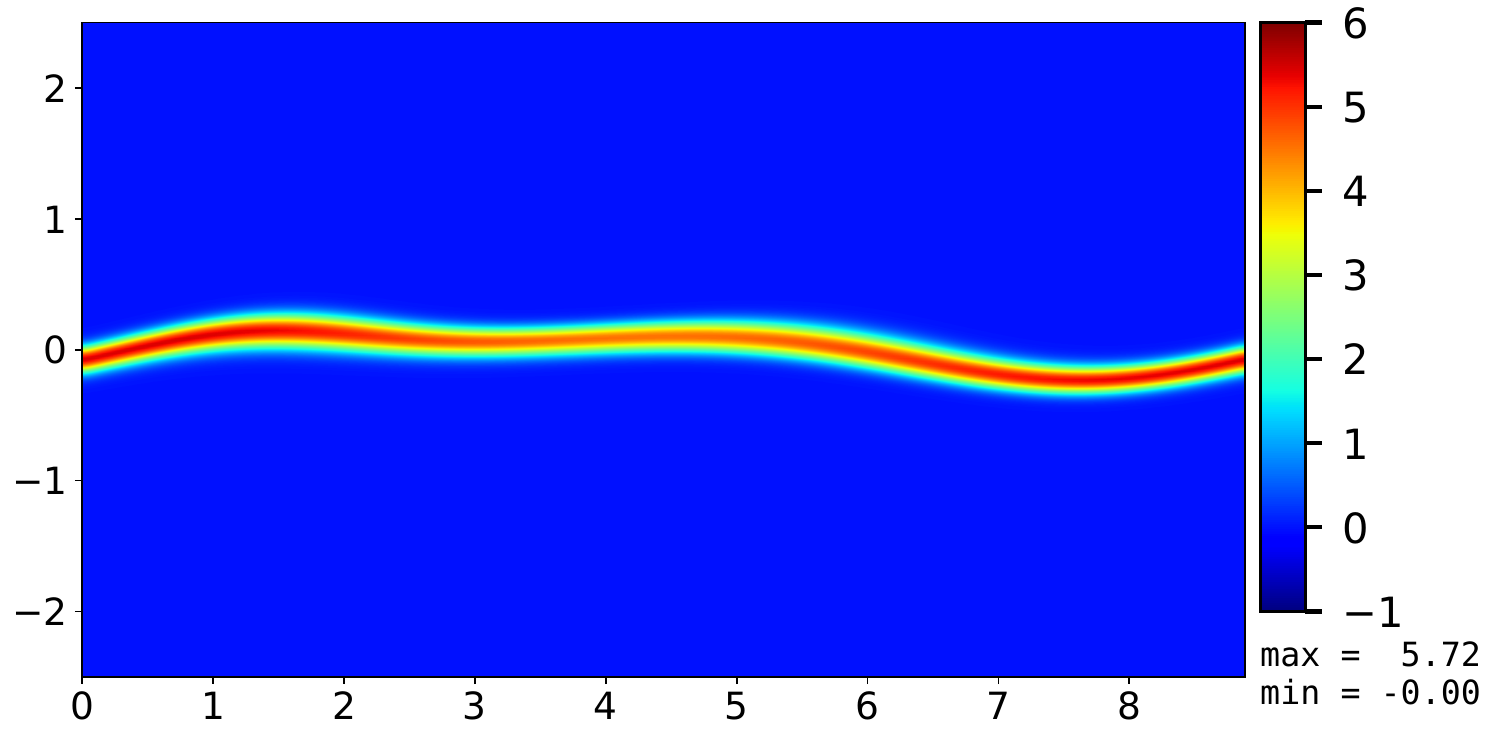} \\[-2pt]
    \includegraphics[width=0.35\textwidth]{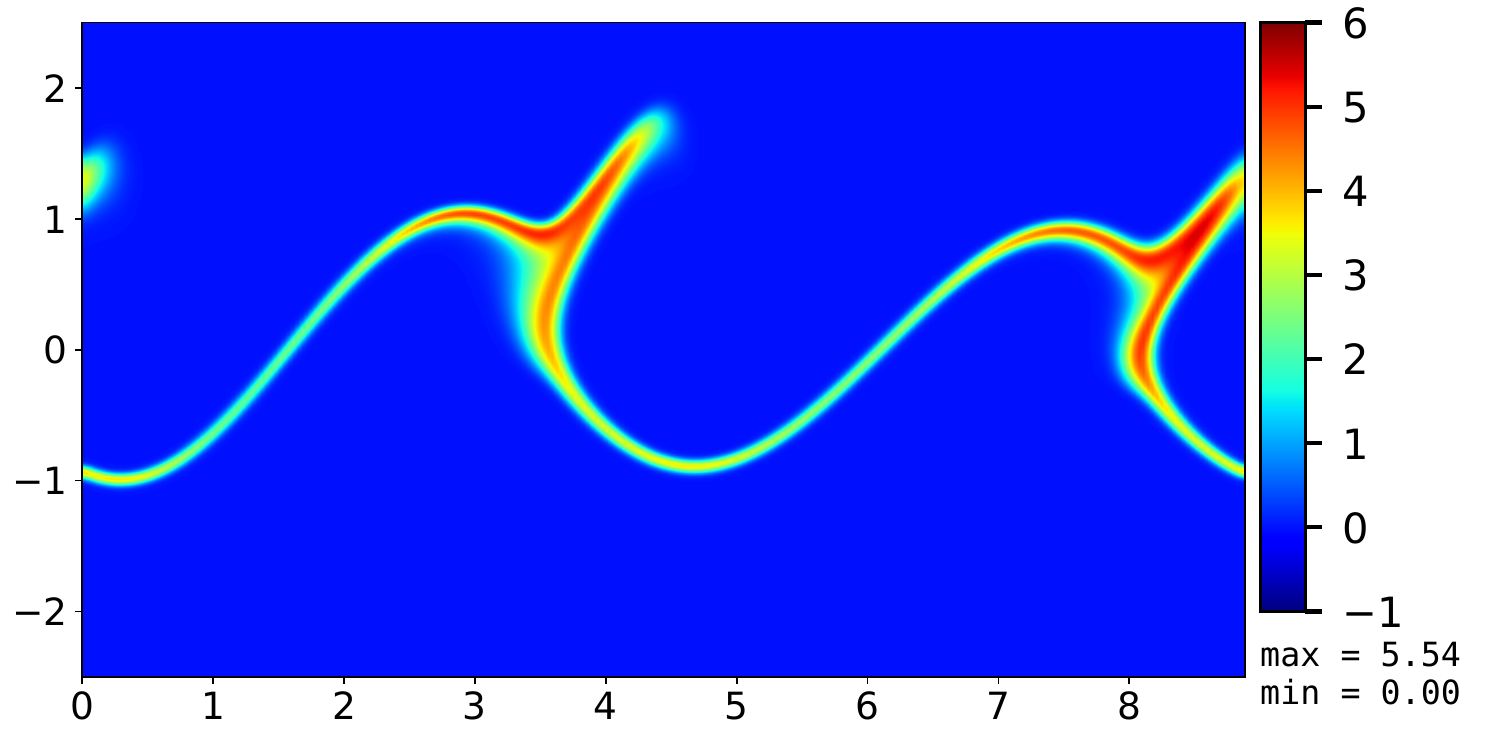} &
    \includegraphics[width=0.35\textwidth]{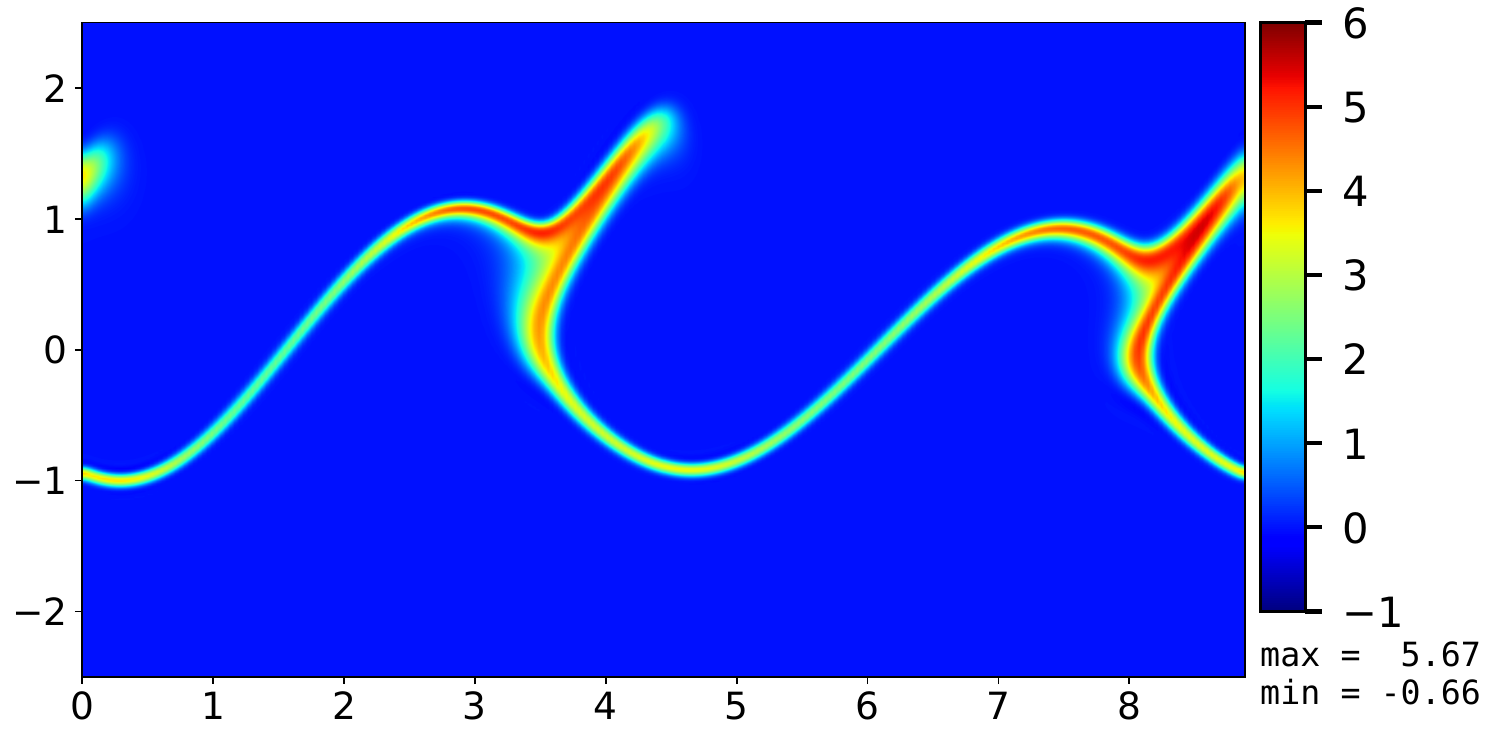} \\[-2pt]
    \includegraphics[width=0.35\textwidth]{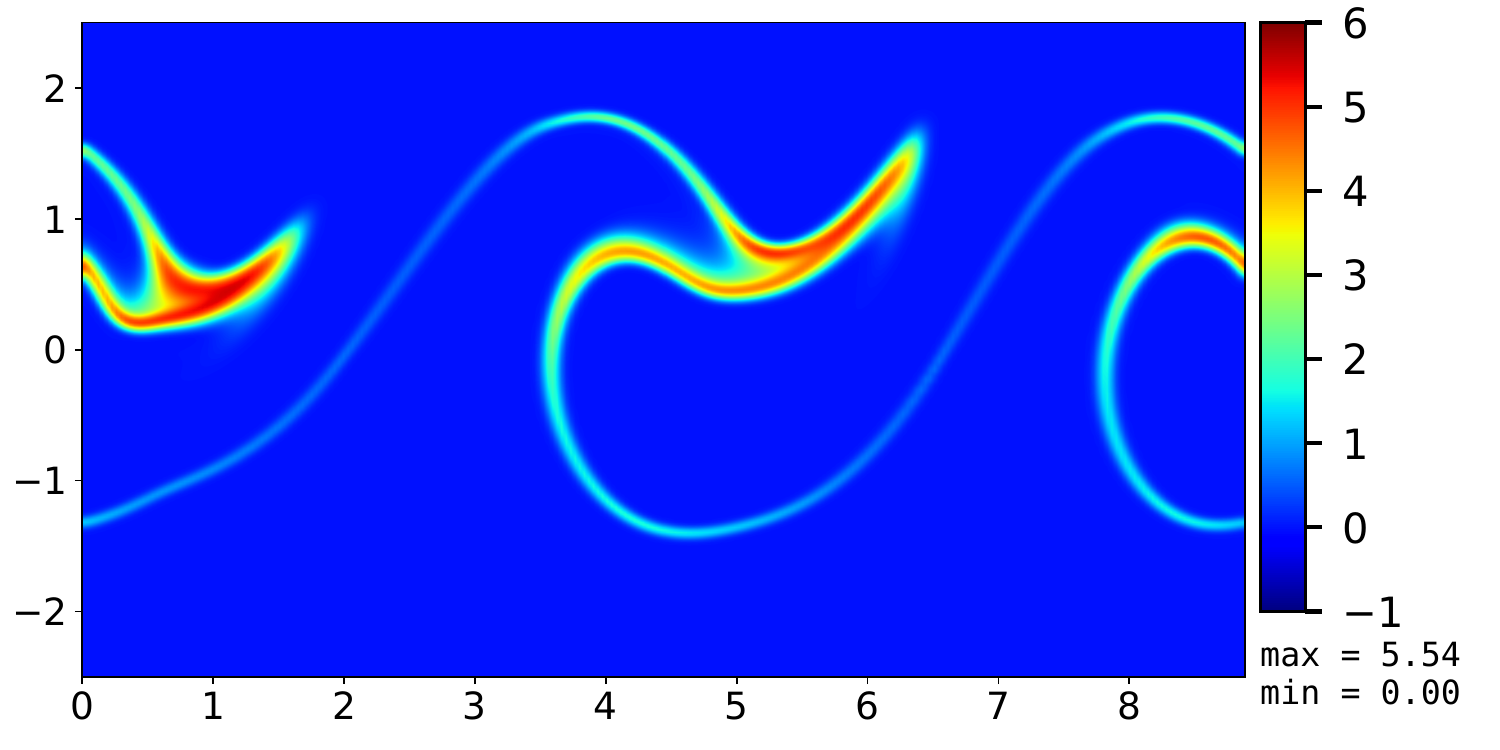} &
    \includegraphics[width=0.35\textwidth]{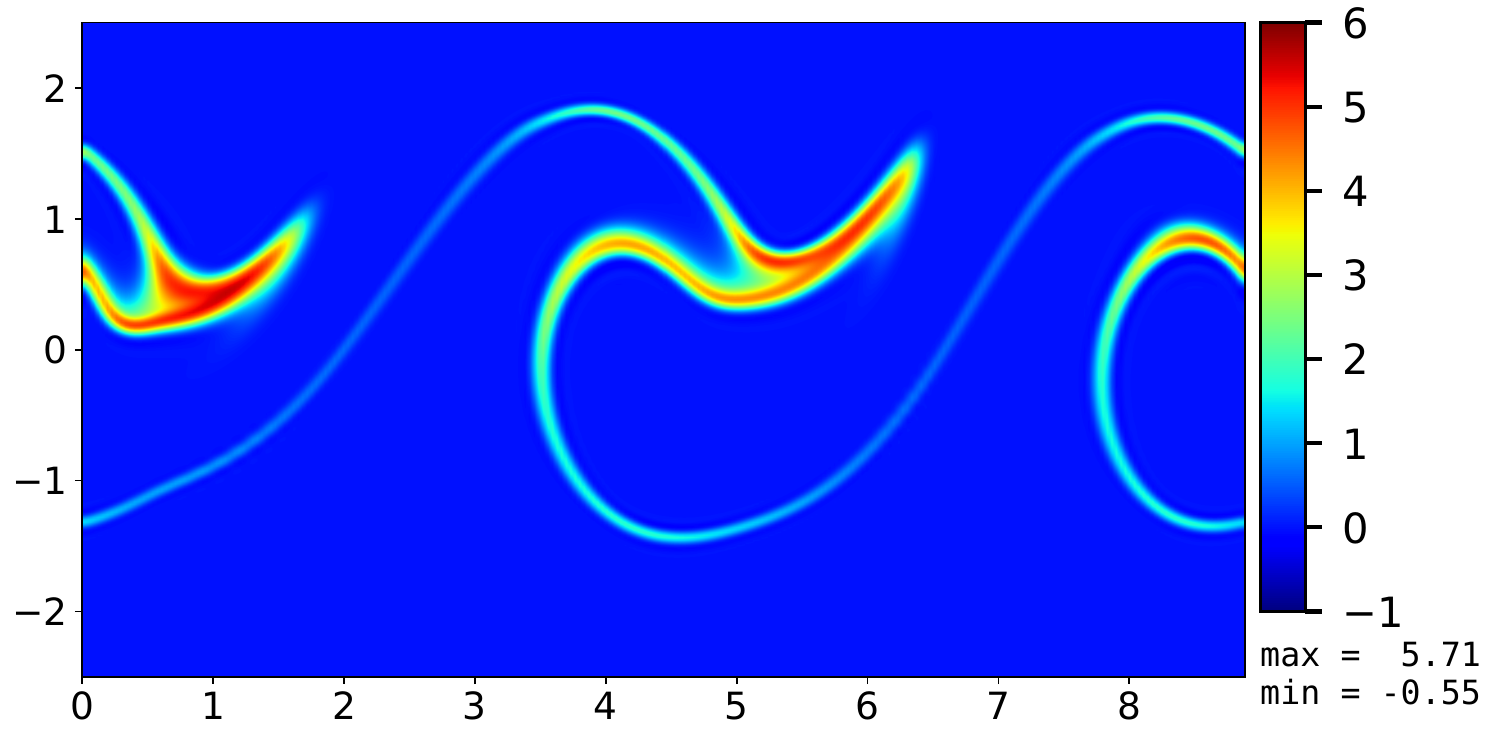} \\[-2pt]
    \includegraphics[width=0.35\textwidth]{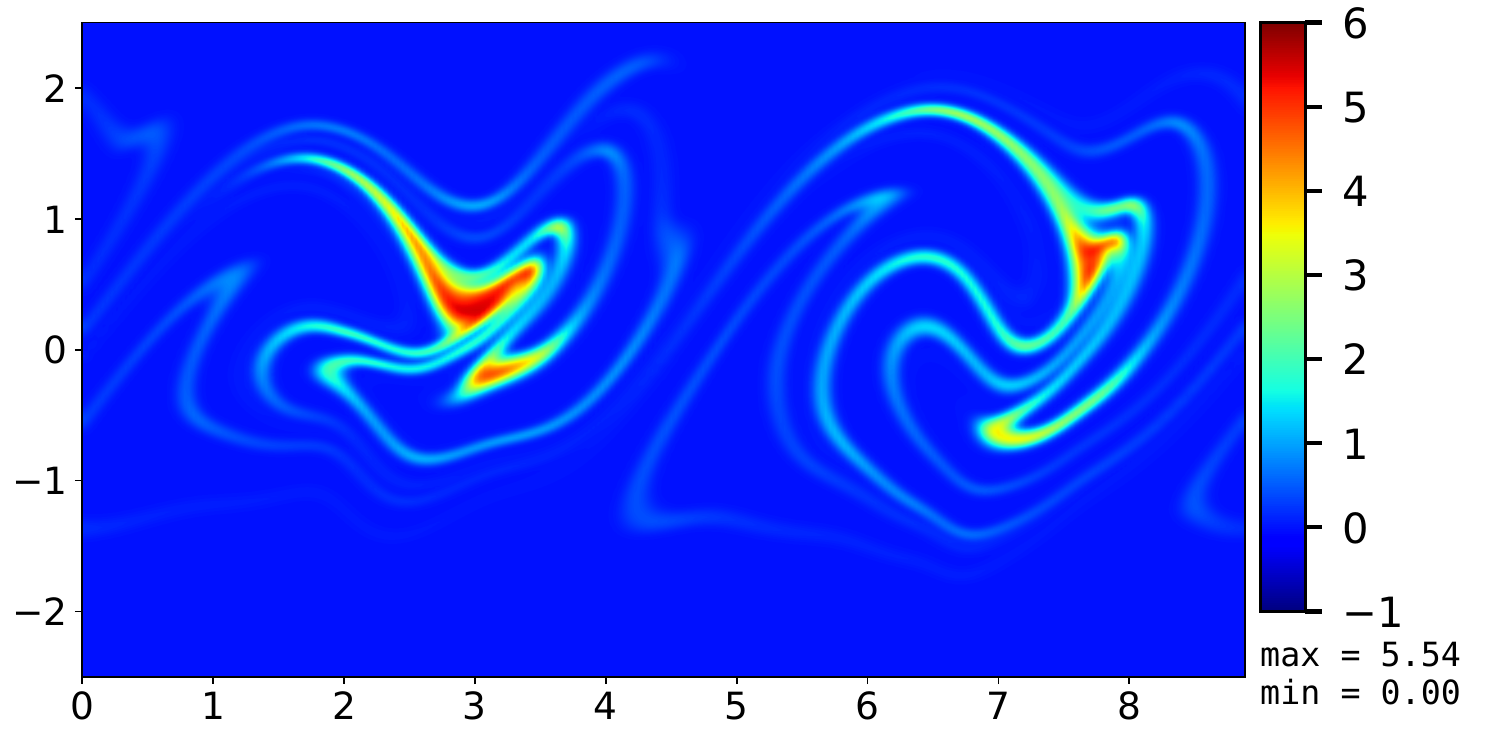} &
    \includegraphics[width=0.35\textwidth]{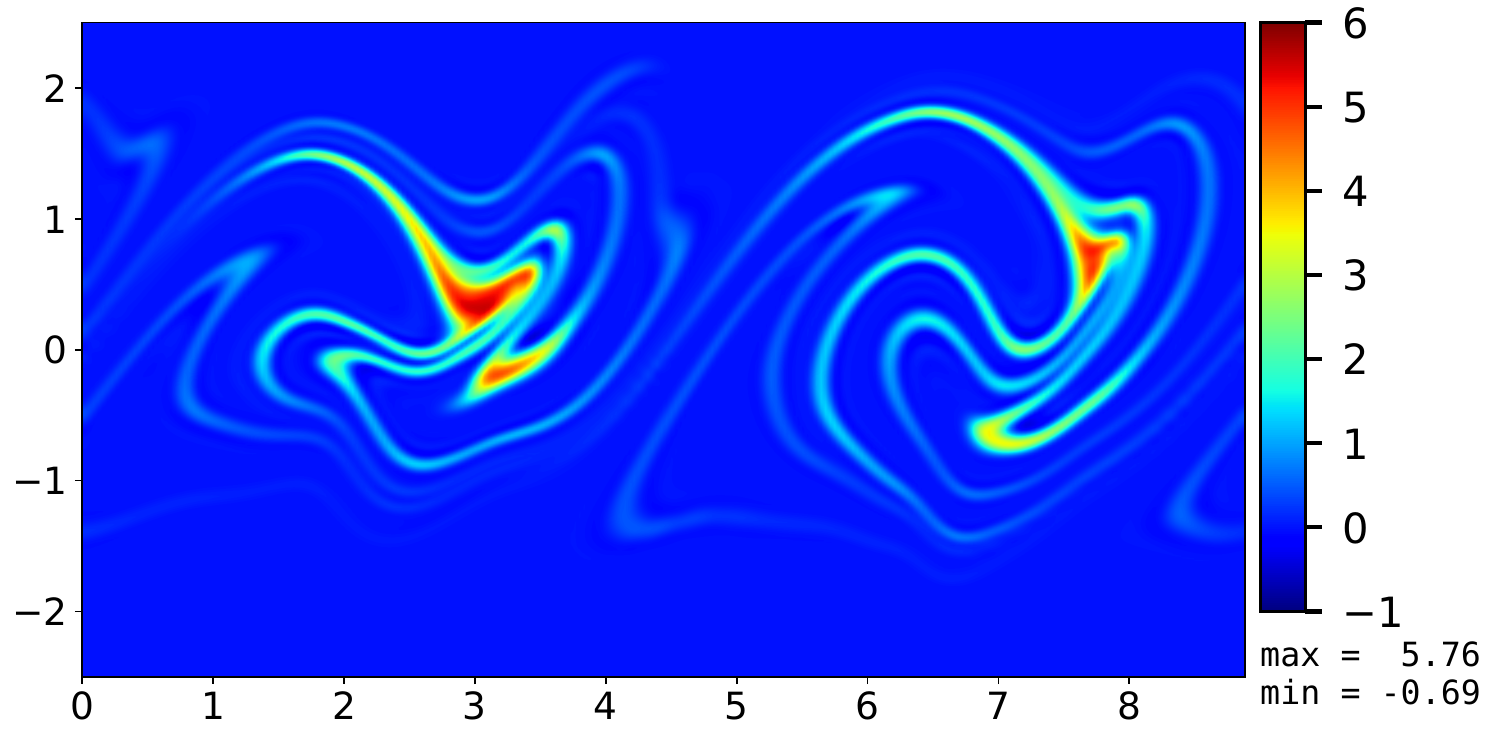} \\
  \end{tabular}

  \caption{
    Time evolution of the distribution function $f(x,v)$ obtained by the improved CCSL (left column) and the BSL (right column) for the 1D relativistic Vlasov--Maxwell model at $t = 0,\,5,\,10,\,17,\,20,$ and $30$ with $N_x = N_v = 256$ and $\Delta t = 0.01$.
    }
  \label{Fig_f_RVM}
\end{figure}
Fig. \ref{Fig_curves_RVM} shows the time evolution of the relative errors of mass, $L^1$- and $L^2$-norms, and total energy conservation. Both the CCSL and improved CCSL schemes strictly conserve mass during the simulation, whereas the BSL method exhibits a gradual deviation over time. The improved CCSL shows the smallest variation in the $L^1$-norm and remains almost constant in time, while the BSL and CCSL results display small oscillations. For the $L^2$-norm, the BSL performs slightly better, but the difference remains marginal throughout the simulation. The total energy evolution of the improved CCSL is nearly identical to that of the BSL method, indicating that both schemes achieve comparable accuracy in global energy conservation. Overall, the improved CCSL maintains excellent long-time stability and demonstrates the best overall conservation of mass and $L^1$-norm among the three schemes.

\begin{figure}[htbp]
    \centering
    \begin{subfigure}[b]{0.4\textwidth}
        \centering
        \includegraphics[width=\linewidth]{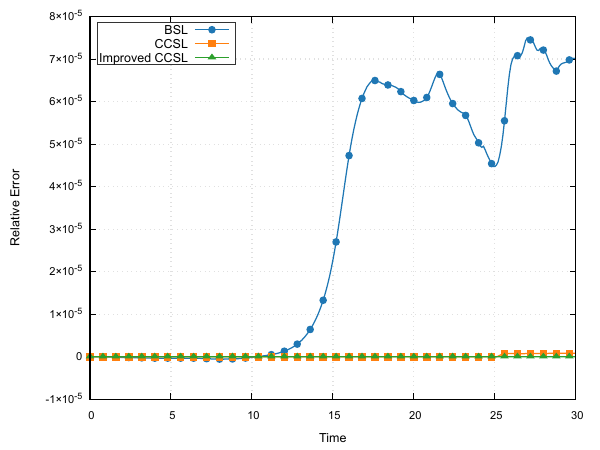}
        \caption{Mass.}
    \end{subfigure}
     \vspace{0.5em}
    \begin{subfigure}[b]{0.4\textwidth}
        \centering
        \includegraphics[width=\linewidth]{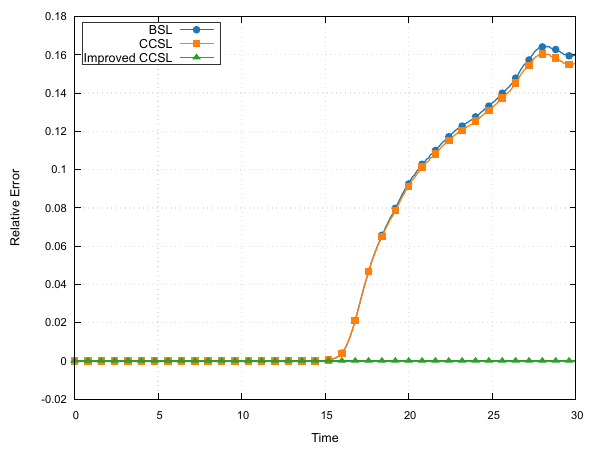}
        \caption{$L^1$-norm.}
    \end{subfigure}
    
    \begin{subfigure}[b]{0.4\textwidth}
        \centering
        \includegraphics[width=\linewidth]{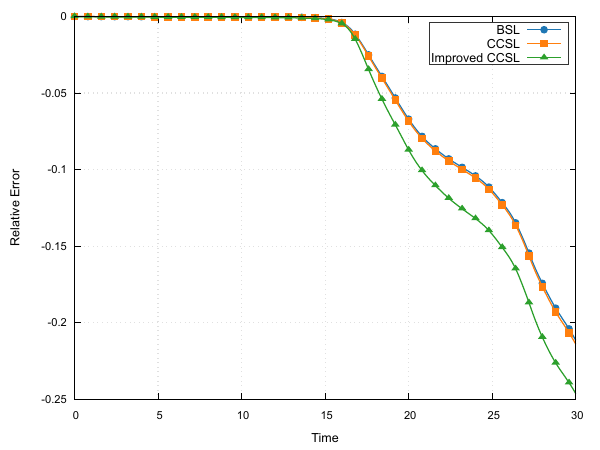}
        \caption{$L^2$-norm.}
    \end{subfigure}
     \vspace{0.5em}
    \begin{subfigure}[b]{0.4\textwidth}
        \centering
        \includegraphics[width=\linewidth]{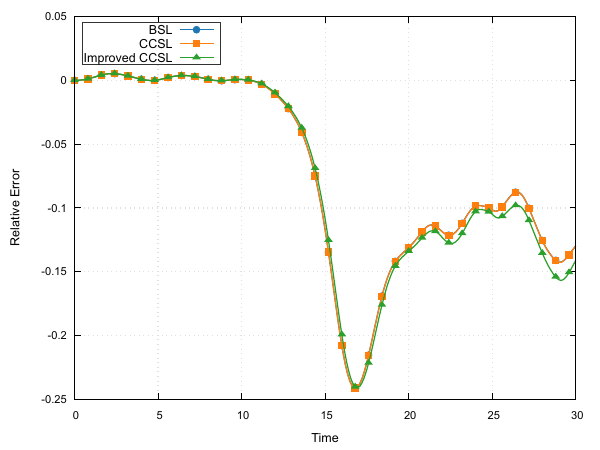}
        \caption{Total energy.}
    \end{subfigure}

    \caption{Time evolution of mass, $L^1$-norm, $L^2$-norm, and total energy conservation obtained by three different methods: CCSL, the improved CCSL, and BSL for the 1D relativistic Vlasov--Maxwell model.}
    \label{Fig_curves_RVM}
\end{figure}

\section{Conclusion}
\label{sec_conclusion}
In this work, we have employed the conservative cascade semi-Lagrangian (CCSL) method for solving the Vlasov equation. The proposed scheme combines the conservative framework of the classical CSL method with the Cascade remapping strategy, enabling accurate volume-to-volume transport in multiple dimensions. A consistency analysis demonstrates that the CCSL scheme attains second-order spatial accuracy, with the dominant error arising from the geometric approximation of the backtracked region. Building upon the original CCSL formulation, two main improvements were proposed. First, a freestream-preserving correction was applied to restore the exact volume of the backtracked cells and ensure physical consistency under divergence-free velocity fields. Second, a maximum-principle limiter was implemented to eliminate spurious oscillations while maintaining both mass conservation and positivity. Comprehensive numerical experiments, including linear advection, guiding-center dynamics, and the relativistic Vlasov–Maxwell system, confirm the theoretical results. 
The improved CCSL method exhibits excellent mass conservation, robustness, and long-time stability, and it outperforms traditional CSL and BSL schemes, particularly in problems where phase-space freestream preservation is essential. Future work will focus on extending the CCSL framework to higher-dimensional systems and implementing parallelization strategies to further improve its computational efficiency for large-scale plasma simulations.

\section*{Acknowledgments}

This work has been supported by the  Natural Science Foundation of China under Grant (12371432). The second author gratefully acknowledges the invitation and support of the Harbin Institute of Technology. This work was conducted within the framework of the ANR project COOKIE, to which the second author contributed through Task I.3.
\appendix
\section{Proof of Lemma \ref{Lemm:integral_error}}
\label{Ap. Proof of Lemma.integral}
\begin{proof}
We begin by expressing the error term $\epsilon$ as:
\[
\epsilon = h\int_{0}^{1}\int_{\frac{p(x_1) + p(x_1+h)}{2}}^{p(x_1+sh)} f(x_1+sh,y)\,dy\,ds.
\]

Applying the coordinate transformation:
\[
y = \frac{p(x_1) + p(x_1+h)}{2} + t\delta p(s,h),
\]
where $\delta p(s,h) = p(x_1+sh) - \frac{p(x_1) + p(x_1+h)}{2}$, we obtain:
\[
\epsilon = h\int_{0}^{1}\int_{0}^{1} \phi(s,t,h)\,dt\,ds
\]
with
\[
\phi(s,t,h) = \delta p(s,h) f\left(x_1+sh, \frac{p(x_1) + p(x_1+h)}{2} + t\delta p(s,h)\right).
\]

Expanding $\phi$ via Taylor series in $h$ yields:
\[
\epsilon = h\int_{0}^{1}\int_{0}^{1} \left(\phi(s,t,0) + \partial_h\phi(s,t,0)h + \frac{1}{2}\partial_h^2\phi(s,t,0)h^2\right)\,dt\,ds + \mathcal{O}(h^4).
\]

Through direct computation (verified symbolically), we find:
\[
\epsilon= \left(\frac{1}{24}\partial_yf(p')^2 + \frac{1}{12}\partial_xfp' - \frac{1}{12}fp''\right)\bigg|_{(x_1,p(x_1))}h^3 + \mathcal{O}(h^4).
\]
\end{proof}
\section{Proof of Proposition \ref{Prop_intermediate}}
\label{Ap. Proof of Proposition Prop_intermediate}
\begin{proof}
    We first define the intermediate cell and its relationship to the true intermediate cell. Without loss of generality, as shown in Fig. \ref{Fig_mass inter}, the intermediate cell $K'L'N'M'$ (dashed lines) approximates the true intermediate cell $KLNM$ (solid curves) with piecewise linear boundaries, where the vertical boundaries are determined by $\overline{y}_{i,l-\frac{1}{2}}$ and $\overline{y}_{i,l+\frac{1}{2}}$. The mass of the intermediate cell, $M_{\text{intermediate}}$, is given by:
    \begin{figure}[htbp]
        \centering
        \includegraphics[width=0.3\textwidth]{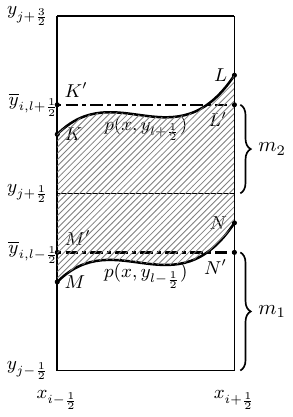}
        \caption{Geometric relationship between the intermediate cell $K'L'N'M'$ and the true intermediate cell $KLNM$. The intermediate-cell boundaries are constructed by connecting the midpoint values $\overline{y}_{i,l-\frac{1}{2}} = \tfrac{1}{2}\bigl(p(x_{i-\frac{1}{2}},y_{l-\frac{1}{2}})+p(x_{i+\frac{1}{2}},y_{l-\frac{1}{2}})\bigr)$ and $\overline{y}_{i,l+\frac{1}{2}} = \tfrac{1}{2}\bigl(p(x_{i-\frac{1}{2}},y_{l+\frac{1}{2}})+p(x_{i+\frac{1}{2}},y_{l+\frac{1}{2}})\bigr)$, where the function $p$ is defined in Eq.~\ref{Eq. p(x)}.}
        \label{Fig_mass inter}
    \end{figure}
    \begin{equation}
    \label{Eq. mass inter}
    \begin{aligned}
        M_{\text{intermediate}}=&m_2+m_{i,j}-m_1\\
        =&F_2(\overline{y}_{i,l+\frac{1}{2}})+m_{i,j}-F_1(\overline{y}_{i,l-\frac{1}{2}})\\
        =&\int_{x_{i-\frac{1}{2}}}^{x_{i+\frac{1}{2}}}\int_{\overline{y}_{i,l-\frac{1}{2}}}^{\overline{y}_{i,l+\frac{1}{2}}} f(x,y)\,dy\,dx+R^{2d+1},
    \end{aligned}
    \end{equation}
    where $m_{i,j}$ denotes the mass within the cell $[x_{i-\frac{1}{2}}, x_{i+\frac{1}{2}}] \times [y_{j-\frac{1}{2}}, y_{j+\frac{1}{2}}]$. The function $F_s(y)$ satisfies:
    \begin{equation}
    \label{Eq. primitive function}
    F_s(y)=\int_{x_{i-\frac{1}{2}}}^{x_{i+\frac{1}{2}}}\int_{y_{j-\frac{3}{2}+s}}^yf(\alpha,\beta)\,d\beta\,d\alpha
    \end{equation}
    where $s=1,2$.
    $R^{2d+1}$ denotes the interpolation error of order $2d+1$ for reconstruction.
    
    Therefore, according to Lemma \ref{Lemm:integral_error}, the error between the intermediate cell and its corresponding true intermediate cell is
    \begin{equation}
    \label{Eq. error_Min}
    \begin{aligned}
        |M_{\text{intermediate}}-M_{\text{true}}|=&\Bigg|\int_{x_{i-\frac{1}{2}}}^{x_{i+\frac{1}{2}}}\int_{\overline{y}_{i,l-\frac{1}{2}}}^{\overline{y}_{i,l+\frac{1}{2}}} f(x,y)\,dy\,dx+R^{2d+1}\\ &-\int_{x_{i-\frac{1}{2}}}^{x_{i+\frac{1}{2}}}\int_{p(x,y_{l-\frac{1}{2}})}^{p(x,y_{l+\frac{1}{2}})} f(x,y)\,dy\,dx\Bigg|\\
        \le&\Bigg|\int_{x_{i-\frac{1}{2}}}^{x_{i+\frac{1}{2}}}\int_{p(x,y_{l+\frac{1}{2}})}^{\overline{y}_{i,l+\frac{1}{2}}} f(x,y)\,dy\,dx-\\ &\int_{x_{i-\frac{1}{2}}}^{x_{i+\frac{1}{2}}}\int_{p(x,y_{l-\frac{1}{2}})}^{\overline{y}_{i,l-\frac{1}{2}}} f(x,y)\,dy\,dx\Bigg|+|R^{2d+1}|\\
        =&\Bigg|\mathcal{E}(x,y)\Big|_{x=x_{i-\frac{1}{2}}, y=y_{l-\frac{1}{2}}}^{x=x_{i-\frac{1}{2}}, y=y_{l+\frac{1}{2}}}\Bigg|h^3 + \mathcal{O}(h^4)+|R^{2d+1}|,
    \end{aligned}
    \end{equation}
    where 
    \begin{align*}
        \mathcal{E}(x,y)=&\frac{1}{24}\partial_yf(x,p(x,y))(\partial_xp(x,y))^2 + \frac{1}{12}\partial_xf(x,p(x,y))\partial_xp(x,y)\\
        &-\frac{1}{12}f(x,p(x,y))\partial_{xx}p(x,y).
    \end{align*}
    
    Given the interpolation error satisfies $R^{2d+1}=\mathcal{O}(h^4)$, the substitution of the above expression into Eq.\eqref{Eq. error_Min} combined with Taylor expansion of $\mathcal{E}$ gives:
    \begin{align*}
        \Big|M_{\text{intermediate}}-M_{\text{true}}\Big|\le&\Big|\partial_y\mathcal{E}(x_{i-\frac{1}{2}},y_{l-\frac{1}{2}})\Big|h^4+\mathcal{O}(h^4)+|R^{2d+1}|\\
        =&\mathcal{O}(h^4)
    \end{align*}
    Therefore, the mass error between the intermediate cell and the true intermediate cell is indeed of order $\mathcal{O}(h^4)$, which completes the proof.
\end{proof}

\section{General cases of CCSL method}
\label{Ap. General cases of CCSL method}
For more general cases, consider the region $ABCD$ with corners at $(x_{i\pm\frac{1}{2}}, y_{j\pm\frac{1}{2}})$; the backtracked coordinates of these corners at time $t_n$ are $(x_{i\pm\frac{1}{2}, j\pm\frac{1}{2}}^*, y_{i\pm\frac{1}{2}, j\pm\frac{1}{2}}^*)$. Assume the coordinate distribution satisfies the condition in Proposition \ref{Prop_ CCSL condition}, i.e., the original top-bottom and left-right order is preserved. Define $\overline{x}_1 = \frac{1}{2}(x_{i-\frac{1}{2}, j+\frac{1}{2}}^* + x_{i-\frac{1}{2}, j-\frac{1}{2}}^*)$ and $\overline{x}_2 = \frac{1}{2}(x_{i+\frac{1}{2}, j+\frac{1}{2}}^* + x_{i+\frac{1}{2}, j-\frac{1}{2}}^*)$ and let the nearest half-grid points to their left be $x_{\lambda-\frac{1}{2}}$ and $x_{\mu-\frac{1}{2}}$, respectively, where $\lambda \geq \mu$. In this case, the corresponding Eq. \eqref{Eq. mass of CCSL} should be:
\begin{equation*}
    M_{\text{CCSL}}=
    \begin{cases}
        G_{\lambda,j}^{3}(\overline{x}_2)+\displaystyle\sum_{k=\mu} ^{\lambda}m_{I_{k,j}}-G_{\mu,l}^{3}(\overline{x}_1),\;& \text{if}\;\lambda > \mu,\\
        G_{\lambda,j}^{3}(\overline{x}_2)-G_{\mu,l}^{3}(\overline{x}_1),\;&\text{if}\;\lambda=\mu.
    \end{cases}
\end{equation*}
The corresponding Eq. \eqref{Eq. true intermediate mass} should be
\begin{equation*}
    M^*=\int_{\overline{x}_1}^{\overline{x}_2}\int_{p(x,y_{j-\frac{1}{2}})}^{p(x,y_{j+\frac{1}{2}})}f(x,y,t^n)\,dy\,dx.
\end{equation*}

Thus, for other more general cases, the proof method is analogous.
\section{\texorpdfstring{Reconstruction of function $G$}{Reconstruction of function G}}
\label{Ap. Reconstruction of function G}
For the interpolation order of $2d+1$, the function $G_{i,l}^{2d+1}$ satisfies:
\begin{equation*}
    G_{i,l}^{2d+1}(x_{i-\frac{1}{2}+k})=
    \begin{cases}
        -\displaystyle\sum_{c=-d+k}^{-1}m_{I_{i+c,l}},\;&k<0;\\
        0,\;&k=0;\\
        \displaystyle\sum_{c=0}^{k-1}m_{I_{i+c,l}}-\displaystyle\sum_{c=-d}^{-1}m_{I_{i+c,l}},\;&k>0,
    \end{cases}
\end{equation*}
where $k = -d, \dots, d+1$.
    
%
\bibliographystyle{abbrv}
\bibliography{bib}
%

\end{document}